\documentclass[reqno,11pt]{amsart}
\usepackage[utf8]{inputenc}

\usepackage{amsmath}
\usepackage{mathtools}
\usepackage{amsfonts}
\usepackage{amssymb}
\usepackage{amsmath}
\usepackage{amsthm}
\usepackage{amssymb}
\usepackage{latexsym}
\usepackage{verbatim}
\usepackage{cleveref}
\usepackage{verbatim}
\usepackage[dvipsnames]{xcolor}
\usepackage{tikz-cd}
\usepackage{graphicx}

\theoremstyle{definition}
\newtheorem{definition}{Definition}[section]
\theoremstyle{definition}
\newtheorem{proposition}{Proposition}[section]
\newtheorem*{proposition*}{Proposition}

\newtheorem{lemma}{Lemma}[section]
\newtheorem{remark}{Remark}[section]
\newtheorem{assumption}{Assumption}[section]
\theoremstyle{corollary}
\newtheorem{corollary}{Corollary}[section]


\newcommand{\md}{\mathcal{D}}
\newcommand{\mcu}{\mathcal{U}}
\newcommand{\mcv}{\mathcal{V}}

\newcommand{\lb}{\bigtriangleup}
\newcommand{\intm}{\int_M}
\newcommand{\intb}{\int_{\partial M}}
\newcommand{\ol}[1]{\overline{#1}} 
\newcommand{\mx}{\mathcal{X}}
\newcommand{\lap}{\bigtriangleup}

\author{Alessio Marta}

\title[Heat and Schr\"{o}dinger equations with Wentzell conditions]
{Local well-posedness for the nonlinear heat and Schr\"{o}dinger equations with nonlinear Wentzell boundary conditions on time-dependent compact Riemannian manifolds}

\address{Dipartimento di Matematica\\ Universit{\`a} degli Studi di Milano\\ Via Cesare Saldini, 50 --  I-20133 Milano \\ Italy}

\email{alessio.marta@unimi.it}

\begin{document}

\maketitle
\begin{abstract}
We prove a local well-posedness result for the semilinear heat and Schr\"{o}dinger equations with subcritical nonlinearities posed on a time-dependent compact Riemannian manifold and supplied with a nonlinear dynamical boundary condition of Wentzell type.
\end{abstract}

\maketitle

\section{Introduction and main results}
Let $(M,g_t)$ be a smooth, oriented, compact Riemannian manifold with boundary $\partial M$, where $g_t$ is a time-dependent smooth Riemannian metric defined for $t \in [0,T]$, $T>0$. We assume that $g_t$ satisfies the following hypothesis.
\begin{assumption}\label{hypothesis:smooth_metric}
The family $\{g_t\}_{t \in [0,T]}$ depends smoothly on $t$.
\end{assumption}
Let $\nu$ be the outward-pointing unit normal of $\partial M$. We study the following related\footnote{In the language of quantum field theory, these two equations are related by means of a Wick rotation, with the Scr\"{o}dinger equation that can be seen as a heat equation with an imaginary time both in the interior and on the boundary.} nonlinear, non-autonomous problems:
\begin{itemize}
\item[1)] The nonlinear heat equation with nonlinear Wentzell boundary conditions
\begin{equation}\label{eq:heat_equation_standard_formulation}
\begin{cases}
\partial_t u = \lap y + \mathcal{N}(t,p,u) \quad & \textit{in} \quad M^\circ\ \\
\partial_t v = \lap v - \rho + \mathcal{N}_b(t,q,v) \quad & \textit{on} \quad \partial M\\
v = u|_{\partial M}, \rho = \nabla_\nu u|_{\partial M}
\end{cases}
\end{equation}
\item[2)] The nonlinear Scr\"{o}dinger equation with nonlinear Wentzell boundary conditions
\begin{equation}\label{eq:schrodinger_equation_standard_formulation}
\begin{cases}
\partial_t u = i \lap y + i \mathcal{N}(t,p,u) \quad & \textit{in} \quad M^\circ\ \\
\partial_t v = i \lap v - i \rho + i \mathcal{N}_b(t,q,v) \quad & \textit{on} \quad \partial M\\
v = u|_{\partial M}, \rho = \nabla_\nu u|_{\partial M}
\end{cases}
\end{equation}
where $p \in M^\circ$ and $q \in \partial M$.
\end{itemize}
In the following -- see \Cref{sec:evolution_equation_formulation} -- we shall understand the restrictions $u|_{\partial M}$ and $\nabla_\nu u|_{\partial M}$ as suitable traces in Sobolev spaces \cite{LionMagenes}.
\begin{remark}
Sometimes in the literature on the heat equation with dynamical boundary conditions, also conditions of the form $\lap v + \beta v - \rho = 0$, with $\beta$ typically a smooth function, are called dynamical conditions of Wentzell type \cite{Favini05,Arendt2003}. 
\end{remark}
The hypotheses we assume on the nonlinearities are standard. Let us denote with $\mathsf{C_M}$ and $\mathsf{C_{\partial M}}$ the critical exponents of the Sobolev embeddings for $H^1(M)$ and $H^1(\partial M)$ in the corresponding $L^2$ spaces, namely
\begin{equation}\label{eq:critical_exponents}
\mathsf{C_M} =
\begin{cases}
\dfrac{2n}{n-2} \ & \textit{if} \ n \geq 3\\
\infty \ & \textit{if} \ n = 2
\end{cases}
\quad
\mathsf{C_{\partial M}} =
\begin{cases}
\dfrac{2n-2}{n-3} \ & \textit{if} \ n \geq 4\\
\infty \ & \textit{if} \ n = 2,3
\end{cases}
\end{equation} 
Then we require that the nonlinear terms are satisfying the following conditions:
\begin{assumption}\label{hyp:nonlinearities_1}
Let $\alpha,\beta$ be two real numbers satisfying
\begin{equation}\label{eq:exponents_range}
\begin{cases}
1 \leq \alpha \leq \dfrac{\mathsf{C_M}}{2}\\
1 \leq \beta \leq \dfrac{\mathsf{C_{\partial M}}}{2}.
\end{cases}
\end{equation}
The nonlinear terms $\mathcal{N}$ and $\mathcal{N}_b$ seen as functions from $([0,T] \times M) \times \mathbb{R}$ and $([0,T] \times \partial M) \times \mathbb{R}$ to $\mathbb{C}$ are Carath\'eodory functions satisfying the following conditions:
\begin{itemize}
\item[a)] For every $u,v \in \mathbb{R}$ and $(t,p) \in [0,T] \times M$ and $(t,q) \in [0,T] \times \partial M$ there are $C,C_b \geq 0$ and $\alpha,\beta \geq 1$ such that 
\begin{equation}\label{eq:non_linearities_form}
\begin{split}
|\mathcal{N}(t,p,u)| \leq C ( 1 + |u|^{\alpha} )\\
|\mathcal{N}_b(t,q,v)| \leq C_b ( 1 + |v|^{\beta})
\end{split}
\end{equation}
\item[b)]  For every $u_1,u_2,v_1,v_2 \in \mathbb{R}$ and $(t,p) \in [0,T] \times M$ and $(t,q) \in [0,T] \times \partial M$ there are $K,K_b \geq 0$ and $\alpha,\beta \geq 1$ such that 
\begin{equation*}
|\mathcal{N}(t,p,u_1)-\mathcal{N}(t,p,u_2)| \leq K |u_1-u_2| \left( 1 + |u_1|^{\alpha-1} + |u_2|^{\alpha-1} \right)
\end{equation*}
\begin{equation*}
|\mathcal{N}_b(t,p,v_1)-\mathcal{N}_b(t,p,v_2)| \leq K_b |v_1-v_2| \left( 1 + |v_1|^{\beta-1} + |v_2|^{\beta-1} \right)
\end{equation*}
\end{itemize}
\end{assumption}
In the case $\mathsf{C_M}=\infty$ or $\mathsf{C_{\partial M}}=\infty$ we understand the inequalities in \Cref{eq:exponents_range} as $\alpha \geq 1$  and $\beta \geq 1$.
\begin{remark}
This assumption is met if we consider power type non-linearities of the form
\begin{equation*}
\begin{split}
\mathcal{N}(t,x,y) & = P(t,x) |y|^{\alpha-1}y\\
\mathcal{N}_b(t,x,z) & = P_b(t,x) |z|^{\beta-1}z
\end{split}
\end{equation*}
with $P \in \mathcal{C}^0([0,T]\times M)$, $P_b \in \mathcal{C}^0([0,T]\times \partial M)$.
\end{remark}

Wentzell boundary conditions were first introduced in \cite{Wentzell59} as the most general boundary conditions which restrict a second order diffusive elliptic operator on $\mathbb{S}^1$ and $\mathbb{S}^2$ to the infinitesimal generator of a positive contraction semigroup on the space of continuous functions over its domain. This kind of dynamic boundary conditions has been intensively studied in the last years for elliptic \cite{Fav00,Mugnolo06,Fav16,Hung18,Binz20}, parabolic \cite{Fa02,Arendt2003,VazVit08,Cav16,Gol20,cho22,giga22,chorfi23,Fila23,Ismailov23,Carvajal23,Mercado23} and hyperbolic \cite{Gal04,Favini05,Vitillaro16,Zahn17,DFJ18,JW20,JW21,DJM22,LiChan21,Li21,Vanspranghe20,Mehdi22,Vanspranghe22,Marta23} equations both in the linear and nonlinear case. In particular this type of boundary conditions have been considered in controllability problems both for the heat \cite{cho22,giga22,Carvajal23,chorfi23,Fila23} and the Schr\"{o}dinger equations \cite{Mercado23}.  Linear and nonlinear diffusion equations with dynamic boundary conditions find applications in models in which there are diffusion processes tangential to the boundary, such as heat transfer in the interface between a solid and a moving fluid \cite{Langer32} or population dynamics \cite{Farkas11}. In models describing biological processes \cite{mitra2006,Jianrong08} or stochastic processes with memory \cite{Carpio17}, non-autonomous evolution equations of the form $\partial_t y = \kappa f(t) \lap y$, with $\kappa \in \{1,i\}$ depending on the case and $f$ a smooth function of time, have been considered, therefore it is interesting to investigate also the case of time-dependent Laplace-Beltrami operators.  

Often in unbounded domains, let us say the positive half space of $\mathbb{R}^n$, the key for the study of the nonlinear Schr\"{o}dinger equation is given by a Strichartz estimate. In the case of bounded subsets, however, this type of estimates presents a loss in regularity compared to the unbounded case
\cite{Burq01,Anton05,Cav16,Macia2011,Ozsari15}. Furthermore, due to the inhomogeneous nature of the boundary condition this kind of estimate is difficult to obtain \cite{Cav16}. To overcome these hurdles, here we follow the approach employed in \cite{Marta23} to study the non-autonomous nonlinear wave equation with Wentzell boundary conditions, the main goal of this enquiry being to establish a local well-posedness result for the non-autonomous problems given in  \Cref{eq:heat_equation_standard_formulation,eq:schrodinger_equation_standard_formulation}. We accomplish this task recasting \Cref{eq:heat_equation_standard_formulation,eq:schrodinger_equation_standard_formulation} as first order evolution equations of the form
\begin{equation}\label{eq:evolution_formulation_simplified}
\dfrac{dX}{dt}(t)  = \kappa A(t)X(t) + \mathcal{N}(t,X)
\end{equation}
where the unknown $X=(u,v)$ is in $\mcu = L^2(M^\circ) \times L^2(\partial M)$, $A$ is a time-dependent operator expressing the linear part of the equation and $\mathcal{N}$ is a term containing the nonlinearities. In this expression the parameter $\kappa$ can be either $1$ or $i$ for heat and the Schr\"{o}dinger equation respectively. A fundamental physical requirement for the Schr\"{o}dinger equation is that the operator $A(t)$ must be self-adjoint for each $t \in [0,T]$, so we start our analysis studying the self-adjointness of the family of operators $\{A(t)\}_{t \in [0,T]}$. Making use of the notion of boundary triple, it turns out that for each time $t$ the operator $A(t)$ is indeed self-adjoint on its maximal domain (\Cref{prop:operator_a_selfadjoint}), a fact that shall be useful also for the study of the heat equation. The first step to study the well-posedness of these problems is to analyze the linearized equations with a time-independent metric. The result of this enquiry is that the evolution family associated with the linear heat equation is an analytic semigroup (\Cref{prop:analytic_semigroup}), while the one associated with the Schr\"{o}dinger problem is a unitary group (\Cref{prop:one-parameter-group}). We are then able to prove the following well-posedness proposition for the linear non-autonomous problems as per \Cref{eq:evolution_formulation_simplified} -- The definitions of classical and mild solutions are in \Cref{sec:analytic_preliminaries}.
\begin{proposition*}[\Cref{prop:low_regularity_non_auto_well_posedness}]
Let $F \in \mathcal{C}^1([0,T],\mcu)$ and suppose that $X(0) \in \mcu$. Then for $\kappa \in \{1,i\}$ the linear non-autonomous problems
\begin{equation*}
\begin{cases}
\dot{X} = \kappa A(t) X(t) + F(t), \quad t \in [0,T]\\
X(0) = X_0
\end{cases}
\end{equation*}
admit a unique mild solution $X(t) \in \mathcal{C}^0([0,T],\mcu)$ such that for every $t \in [0,T]$
\begin{equation*}
\| X(t) \|_\mcu \leq \|  X(0) \|_\mcu + \int_0^t \| F(s) \|_\mcu ds.
\end{equation*}
Furtermore, the solution solution $X(t) \in \mathcal{C}^1([0,T],\mcu) \cap \mathcal{C}^0([0,T],\mathcal{D}(A))$ is classical if $X(0) \in \md (A)$.
\end{proposition*}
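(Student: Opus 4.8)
The plan is to reduce the inhomogeneous non-autonomous Cauchy problem to the construction of a two-parameter evolution family $\{U(t,s)\}_{0\le s\le t\le T}$ on $\mcu$ for the homogeneous equation $\dot X=\kappa A(t)X$, and then to represent the solution by a variation of constants formula. First I would record that by \Cref{prop:operator_a_selfadjoint} each $A(t)$ is self-adjoint on its maximal domain, and that \Cref{hypothesis:smooth_metric} forces the maximal domains $\mathcal{D}(A(t))$ to coincide, as sets, with a single fixed space $\mathcal{D}(A)$ --- uniform ellipticity of $\lap_{g_t}$ on the compact manifold makes the maximal domain a metric-independent Sobolev-type space --- with graph norms that are uniformly equivalent in $t$ and with $t\mapsto A(t)x$ of class $\mathcal{C}^1$ for every $x\in\mathcal{D}(A)$. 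For $\kappa=i$ these facts put us in the framework of Kato's theorem on linear evolution equations of hyperbolic type, which produces a strongly continuous \emph{unitary} propagator $U(t,s)$ with $U(t,s)\mathcal{D}(A)\subseteq\mathcal{D}(A)$, $\partial_t U(t,s)x=iA(t)U(t,s)x$ and $\partial_s U(t,s)x=-iU(t,s)A(s)x$ for $x\in\mathcal{D}(A)$. For $\kappa=1$, \Cref{prop:analytic_semigroup} shows that each $A(t)$ generates an analytic semigroup of contractions --- each $A(t)$ being self-adjoint and nonpositive, hence dissipative --- and the same regularity in $t$ lets us invoke the Tanabe--Sobolevskii construction of a parabolic evolution family, again strongly continuous and with $\|U(t,s)\|_{\ml(\mcu)}\le 1$.

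With the propagator in hand I would \emph{define} the mild solution by Duhamel's formula $X(t)=U(t,0)X_0+\int_0^t U(t,s)F(s)\,ds$ and verify it agrees with the notion of mild solution fixed in \Cref{sec:analytic_preliminaries}. Joint strong continuity of $(t,s)\mapsto U(t,s)$, the bound $\|U(t,s)\|_{\ml(\mcu)}\le 1$ and $F\in\mathcal{C}^0([0,T],\mcu)$ give $X\in\mathcal{C}^0([0,T],\mcu)$ by dominated convergence. Uniqueness is immediate, since the difference of two mild solutions with the same data is a mild solution of the homogeneous problem and hence equals $U(t,0)\cdot 0=0$. The a priori estimate then follows at once from $\|U(t,s)\|_{\ml(\mcu)}\le 1$,
\[
\|X(t)\|_\mcu\le\|U(t,0)X_0\|_\mcu+\int_0^t\|U(t,s)F(s)\|_\mcu\,ds\le\|X_0\|_\mcu+\int_0^t\|F(s)\|_\mcu\,ds,
\]
the same bound being obtainable, for sufficiently regular data, from the energy identity $\tfrac12\tfrac{d}{dt}\|X\|_\mcu^2=\operatorname{Re}\langle\kappa A(t)X,X\rangle+\operatorname{Re}\langle F,X\rangle\le\|F\|_\mcu\|X\|_\mcu$ (using that $\langle A(t)X,X\rangle$ is real and $\le 0$) followed by an approximation argument.

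For the regularity statement I would take $X_0\in\mathcal{D}(A)$ and use the standard fact --- valid for the unitary propagator in the Schr\"{o}dinger case and for the parabolic evolution family in the heat case --- that $t\mapsto U(t,0)X_0$ belongs to $\mathcal{C}^1([0,T],\mcu)\cap\mathcal{C}^0([0,T],\mathcal{D}(A))$ and solves the homogeneous equation. Since $F\in\mathcal{C}^1([0,T],\mcu)$, an integration by parts in $s$ inside the Duhamel integral (shifting $A(s)$ by a constant if $0\in\sigma(A(s))$) shows that the convolution term enjoys the same regularity, so that differentiating the Duhamel formula yields $\dot X=\kappa A(t)X+F$ pointwise and $X$ is classical. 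I expect the only real difficulty to be the first paragraph: checking that \Cref{hypothesis:smooth_metric} indeed supplies the precise hypotheses required by Kato's theorem and by the Tanabe--Sobolevskii theory --- constancy of the domain, uniform resolvent and stability bounds, sufficient regularity of $t\mapsto A(t)$ --- and that the resulting propagator is unitary for $\kappa=i$ and contractive for $\kappa=1$; once this is secured, the remaining steps are routine.
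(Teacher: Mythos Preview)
Your proposal is correct and follows essentially the same route as the paper: parabolic evolution-family theory (Tanabe--Sobolevskii, i.e.\ \Cref{prop:non_autonomous_inhomogeneous_linear_solutions}) for $\kappa=1$, Kato's hyperbolic theory (\Cref{prop:well_posedness_hyperbolic_problems}) for $\kappa=i$, and Duhamel's formula for the contraction bound. The only point where the paper is more explicit is precisely the one you flag as the ``real difficulty'': it verifies Kato's conditions C1--C5 by decomposing $B(t,s)=(iA(t)+\mathbb{I})(iA(s)+\mathbb{I})^{-1}$ into a $t$-independent piece and a piece carrying only the smoothly $t$-dependent diagonal Laplacians, so that C2--C5 reduce to \Cref{hypothesis:smooth_metric} and the boundedness of $(iA(s)+\mathbb{I})^{-1}$.
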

Since $A$ is the generator of an analytic semigroup, in the case of the heat equation we can give a stronger version of this result -- The associated evolution family has a smoothing effect on initial data in $\mcu$ and the mild solution of the previous statement is actually in $\mathcal{C}^1([\varepsilon,T],\mcu) \cap \mathcal{C}^0([\varepsilon,T],\mathcal{D}(A))$ for every $\varepsilon \in (0,T)$. Next, we focus on the semilinear problem. Using a fixed-point argument we obtain the following conditional well-posedness result for initial data in a ball of $\mcu$.
\begin{proposition*}[\Cref{prop:nonlinear_well_posedness_conditional}]
Assume that Hypotheses \eqref{hypothesis:smooth_metric}, \eqref{hyp:nonlinearities_1} hold true. Let $\rho > 0$. Then there is $\tau = \tau(\rho)$ such that for every $X_0 \in \mcu$ with $\| X_0 \|_\mcu \leq \rho$, the problem in \Cref{eq:evolution_nonlinear_2} admits a unique mild solution $X \in C^0([0,\tau],\mcu)$ such that $\| X(t) \|_\infty \leq  \rho (M_0 + 1)$, where $M_0 = \sup_{t,s \in [0,T]} \| U(s,t) \|$.
\end{proposition*}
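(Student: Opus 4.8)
The plan is to put \Cref{eq:evolution_nonlinear_2} in integral (Duhamel) form and to produce the solution as the unique fixed point of a contraction on a short time interval. Denote by $U(t,s)$ the evolution family generated by $\{\kappa A(t)\}_{t\in[0,T]}$; its existence and the bound $\|U(t,s)\|\le M_0$ are supplied by \Cref{prop:low_regularity_non_auto_well_posedness} together with \Cref{prop:analytic_semigroup}/\Cref{prop:one-parameter-group}. A function $X\in\mathcal{C}^0([0,\tau],\mcu)$ solves \Cref{eq:evolution_nonlinear_2} in the mild sense precisely when it is a fixed point of
\begin{equation*}
\Phi(X)(t):=U(t,0)X_0+\int_0^t U(t,s)\,\mathcal{N}(s,X(s))\,ds,
\end{equation*}
where $\mathcal{N}(\cdot,X)$ is the superposition operator assembled from $\mathcal{N}$ and $\mathcal{N}_b$. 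I would look for this fixed point in the closed ball $B_R:=\{X\in\mathcal{C}^0([0,\tau],\mcu):\sup_{0\le t\le\tau}\|X(t)\|_\mcu\le R\}$, $R:=\rho(M_0+1)$, which is complete for the sup-in-time $\mcu$-metric.

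Two things must be checked, for $\tau=\tau(\rho)$ small: that $\Phi(B_R)\subseteq B_R$ and that $\Phi$ is a strict contraction on $B_R$. For the first, $\|\Phi(X)(t)\|_\mcu\le M_0\rho+\int_0^t\|U(t,s)\mathcal{N}(s,X(s))\|_\mcu\,ds$, so everything hinges on bounding the nonlinear contribution uniformly over $B_R$; this is where the subcritical ranges $1\le\alpha\le\mathsf{C_M}/2$, $1\le\beta\le\mathsf{C_{\partial M}}/2$ of \Cref{hyp:nonlinearities_1}(a) are used, together with the Sobolev embeddings $H^1(M)\hookrightarrow L^{2\alpha}(M)$, $H^1(\partial M)\hookrightarrow L^{2\beta}(\partial M)$ defining $\mathsf{C_M}$, $\mathsf{C_{\partial M}}$ --- they make the superposition operator bounded from $H^1(M)\times H^1(\partial M)$ into $\mcu=L^2(M^\circ)\times L^2(\partial M)$. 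To feed this into the Duhamel integral one exploits the regularizing action of the evolution family: for $\kappa=1$ the maps $U(t,s)$ send $\mcu$ into $\md(A^{\theta})$ with norm $\lesssim(t-s)^{-\theta}$, so one actually iterates in the (complete) subspace of $X\in\mathcal{C}^0([0,\tau],\mcu)$ obeying in addition a weighted bound $\sup_{0<t\le\tau}t^{\theta}\|X(t)\|_{\md(A^{\theta})}<\infty$, with $\theta\ge 1/2$ large enough that $\md(A^{\theta})\hookrightarrow H^1(M)\times H^1(\partial M)$; the resulting time-weight is integrable, giving $\|\Phi(X)\|\le M_0\rho+C(\rho)\,\tau^{\gamma}$ for some $\gamma>0$, and the weighted bound on $\Phi(X)$ is recovered from the same estimate. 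For $\kappa=i$, where no smoothing is available, one runs the analogous argument in the energy space modeled on $\md(A^{1/2})$, using that the group of \Cref{prop:one-parameter-group} is there isometric. In either case choosing $\tau(\rho)$ so that $C(\rho)\tau^{\gamma}\le\rho$ gives $\Phi(B_R)\subseteq B_R$. The contraction estimate is obtained in the same manner from the local Lipschitz bounds \Cref{hyp:nonlinearities_1}(b): for $X_1,X_2\in B_R$ one gets $\|\Phi(X_1)-\Phi(X_2)\|\le C(\rho)\tau^{\gamma}\|X_1-X_2\|$, strictly less than $\|X_1-X_2\|$ after possibly shrinking $\tau(\rho)$ once more.

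The Banach fixed point theorem then gives a unique $X\in B_R$ with $X=\Phi(X)$, which is the asserted mild solution and satisfies $\|X(t)\|_\mcu\le R=\rho(M_0+1)$ for $t\in[0,\tau]$; uniqueness in all of $\mathcal{C}^0([0,\tau],\mcu)$, not just in $B_R$, follows from a Gronwall argument on the difference of two solutions. The step I expect to be the main obstacle is precisely the nonlinear estimate: since the data lie only in $\mcu=L^2(M^\circ)\times L^2(\partial M)$, the superposition operators attached to $\mathcal{N}$ and $\mathcal{N}_b$ fail to be Lipschitz --- indeed, in general, to be well defined --- on $\mcu$ when $\alpha,\beta>1$, so the fixed-point argument cannot be closed on $\mcu$ alone; one must genuinely balance this loss against the smoothing of the evolution family near $t=0$ (or, for $\kappa=i$, work in the energy space), controlling the relevant time-weights uniformly in $s,t\in[0,T]$ in the non-autonomous setting.
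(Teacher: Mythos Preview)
Your overall plan---Duhamel formulation plus a contraction on a closed ball in $\mathcal{C}^0([0,\tau],\mcu)$---is exactly what the paper does. The difference lies in how the nonlinear estimate is organized. The paper isolates it into a preliminary result (\Cref{lemma:nonlinear_lipschitz}) asserting that $\mathcal{F}$ is Lipschitz on the ball $B_{\tau,\rho}\subset\mathcal{C}^0([0,\tau],\mcu)$ with constant $L_{\tau,\rho}$; once this is granted, the contraction estimate is immediate, $\|\Phi_{X_0}(X)-\Phi_{X_0}(Y)\|_\infty\le \tau M_0 L_{\tau,\rho}\|X-Y\|_\infty$, and the choice $\tau=\min\{T,(2M_0L_{T,\rho})^{-1}\}$ closes everything without any time weights, fractional powers, or distinction between $\kappa=1$ and $\kappa=i$.

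Your instinct that the superposition operator cannot be Lipschitz from $L^2$ to $L^2$ when $\alpha,\beta>1$ is well founded, and indeed if you look at the proof of \Cref{lemma:nonlinear_lipschitz} you will see that what is actually established is an $H^1\to L^2$ bound, namely $\|\mathcal{N}(t,\cdot,u_X)-\mathcal{N}(t,\cdot,u_Y)\|_{L^2(M)}\le C(\rho,t)\|u_X-u_Y\|_{H^1(M)}$, via H\"older and the subcritical Sobolev embeddings. The paper's fixed-point argument therefore implicitly runs in an $H^1$-type space rather than in $\mcu$ alone, even though the statements are phrased in $\mcu$. Your proposed remedy---for $\kappa=1$ exploit the analytic smoothing $\|U(t,s)\|_{\mcu\to\mathcal{D}(A^\theta)}\lesssim(t-s)^{-\theta}$ with a time-weighted norm, and for $\kappa=i$ work in the energy space where the group is isometric---is the standard and more transparent way to close this gap; it buys you a clean separation between the linear smoothing and the nonlinear loss, at the cost of slightly heavier bookkeeping than the paper's presentation.
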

Proceeding as in \cite{Marta23}, we can then get the following (unconditional) local well-posedness result for mild solutions with initial data in $\mcu$.
\begin{proposition*}[\Cref{prop:nonlinear_well_posedness_unconditional}]
Assume Hypotheses \eqref{hypothesis:smooth_metric} and \eqref{hyp:nonlinearities_1}. Then the following assertions hold true:
\begin{itemize}
\item[a)] For each $u_0 \in \mcu$ there is a maximal mild solution $X(t) \in \mathcal{C}^0(I,\mcu)$ with $I$ either $[0,T]$ or $[0,t^+(X_0))$, where $t^+(X_0) \in [\tau,T]$.
\item[b)] If $t^+(X_0) < T$, then $\lim_{t\rightarrow t^+(X_0)^-} \|X(t) \|_\mcu=+\infty$.
\item[c)] For any $t^\star \in (0,t^+(X_0))$ there is a radius $\rho=\rho(X_0,t^\star)$ such that the map
$$ \overline{B(X_0,\rho)} \rightarrow \mathcal{C}^0([0,b],\mcu), \quad X_0 \mapsto X(t)$$
is Lipschitz continuous.
\end{itemize}
\end{proposition*}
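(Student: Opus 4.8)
The plan is to bootstrap the conditional existence statement of \Cref{prop:nonlinear_well_posedness_conditional} into a maximal mild solution, and then to read off the blow‑up alternative and the Lipschitz dependence from a Gronwall estimate applied to the variation‑of‑constants formula $X(t) = U(t,0)X_0 + \int_0^t U(t,s)\mathcal{N}(s,X(s))\,ds$, where $U(t,s)$ denotes the evolution family generated by the linear part $\kappa A(t)$, so that $M_0 := \sup_{0\le s\le t\le T}\|U(t,s)\|$ is finite by \Cref{prop:low_regularity_non_auto_well_posedness}. The only nonlinear ingredient needed is the local Lipschitz control coming from \Cref{hyp:nonlinearities_1}(b) together with the Sobolev embeddings with exponents $\mathsf{C_M},\mathsf{C_{\partial M}}$ — already used to prove the conditional proposition — namely: for each $R>0$ there is $L(R)$ with $\|\mathcal{N}(s,\xi)-\mathcal{N}(s,\eta)\|_\mcu \le L(R)\|\xi-\eta\|_\mcu$ whenever $\|\xi\|_\mcu,\|\eta\|_\mcu \le R$. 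Everything below is uniform in $\kappa\in\{1,i\}$, since it uses only the bound $M_0$.

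For (a) and (b) I would let $\mathcal{I}$ be the set of $b\in(0,T]$ for which \Cref{eq:evolution_nonlinear_2} has a mild solution on $[0,b]$; \Cref{prop:nonlinear_well_posedness_conditional} with $\rho=\|X_0\|_\mcu$ gives $(0,\tau(\rho)]\subseteq\mathcal{I}$, and the uniqueness part of that proposition (applied on short subintervals with arbitrary initial instant) forces any two mild solutions to agree on the overlap of their intervals, so they patch into a single maximal solution on $I:=\bigcup_{b\in\mathcal{I}}[0,b]$, which is $[0,T]$ if $T\in\mathcal{I}$ and $[0,t^+(X_0))$ otherwise, with $t^+(X_0):=\sup\mathcal{I}\ge\tau(\rho)$; this proves (a). For (b), suppose $t^+:=t^+(X_0)<T$ yet $\limsup_{t\to(t^+)^-}\|X(t)\|_\mcu<\infty$, so that there are $R>0$ and $t_*<t^+$ with $\|X(t)\|_\mcu\le R$ on $[t_*,t^+)$. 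Applying \Cref{prop:nonlinear_well_posedness_conditional} with radius $R$ — whose existence time $\tau(R)$ does not depend on the initial instant — I restart the equation at $t_*$; since the restarted solution coincides with $X$ on $[t_*,t^+)$ and is therefore $\le R$ there, I keep re‑applying the conditional proposition in steps of length $\tau(R)$ until a mild solution defined strictly beyond $t^+$ is obtained, and concatenating it with $X|_{[0,t_*]}$ through $U(t,s)=U(t,\sigma)U(\sigma,s)$ contradicts the maximality of $t^+$. Hence $\|X(t)\|_\mcu\to+\infty$ as $t\to(t^+)^-$.

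For (c), fix $t^\star\in(0,t^+(X_0))$, choose $b\in(t^\star,t^+(X_0))$, set $R_0:=\sup_{t\in[0,b]}\|X(t)\|_\mcu<\infty$, $R_1:=R_0+1$, and $\rho:=\tfrac12 M_0^{-1}e^{-M_0L(R_1)b}$. Given $Y_0\in\overline{B(X_0,\rho)}$ with maximal mild solution $Y$, let $\sigma$ be the supremum of $r\in(0,\min\{b,t^+(Y_0)\})$ with $\sup_{t\in[0,r]}\|Y(t)-X(t)\|_\mcu\le 1$; on $[0,\sigma]$ one has $\|Y(t)\|_\mcu\le R_1$, so subtracting the two variation‑of‑constants formulas and using $\|U(t,s)\|\le M_0$ and the Lipschitz constant $L(R_1)$ yields
\[
\|Y(t)-X(t)\|_\mcu \le M_0\|Y_0-X_0\|_\mcu + M_0 L(R_1)\int_0^t\|Y(s)-X(s)\|_\mcu\,ds, \qquad t\in[0,\sigma],
\]
whence by Gronwall $\sup_{t\in[0,\sigma]}\|Y(t)-X(t)\|_\mcu\le M_0\rho\,e^{M_0L(R_1)b}\le\tfrac12$. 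This strict bound rules out $\sigma<b$ (continuity would extend the inequality $\le 1$ past $\sigma$), and it also rules out $\sigma=t^+(Y_0)\le b$ (on $[0,\sigma)$ the solution $Y$ is bounded by $R_1$, contradicting part (b)); hence $\sigma=b$ and $t^+(Y_0)>b$, so $\|Y(t)-X(t)\|_\mcu\le M_0 e^{M_0L(R_1)b}\|Y_0-X_0\|_\mcu$ for all $t\in[0,b]$, which is the claimed Lipschitz continuity of $Y_0\mapsto Y(\cdot)$ from $\overline{B(X_0,\rho)}$ into $\mathcal{C}^0([0,b],\mcu)$.

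I expect the main obstacle to be part (c): Gronwall produces a useful estimate only on the interval where $Y$ is a priori bounded, so one cannot integrate directly up to $b$, and the bootstrap — feeding the a priori bound $R_1$ and the blow‑up alternative of (b) back in to propagate the local estimate to all of $[0,b]$ — has to be arranged carefully, following the scheme of \cite{Marta23}. Parts (a) and (b) are comparatively routine; the one point requiring attention there is that the conditional existence time $\tau(\rho)$ must depend on the data only through $\|X_0\|_\mcu$ and not on the initial instant, which is precisely what makes the restart/continuation arguments legitimate.
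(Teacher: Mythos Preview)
Your argument is the standard gluing/restart/Gronwall scheme that the paper itself invokes by citing \cite[Proposition 5.3]{Marta23} (the paper gives no proof of its own here, only the sentence ``gluing and shifting solutions in time''), so the approach is exactly the intended one and parts (a) and (c) are carried out correctly.

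There is one genuine slip in part (b): to prove $\lim_{t\to t^+}\|X(t)\|_\mcu=+\infty$ you must negate the \emph{limit}, which gives $\liminf_{t\to t^+}\|X(t)\|_\mcu<\infty$, not $\limsup<\infty$. Your contradiction, as written, only establishes $\limsup=+\infty$. The fix is immediate and uses the same idea: if $\liminf<\infty$, pick a sequence $t_n\uparrow t^+$ with $\|X(t_n)\|_\mcu\le R$, apply \Cref{prop:nonlinear_well_posedness_conditional} with radius $R$ to obtain a uniform lifespan $\tau(R)$, and restart at some $t_n>t^+-\tau(R)$ to extend the solution strictly past $t^+$, contradicting maximality. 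With this adjustment the proof is complete.
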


The paper is organized as follows. In \Cref{sec:analytic_preliminaries} we introduce some basic notions and results concerning the tools we use to prove our results, namely boundary triples, evolutions semigroups and evolution families. In \Cref{sec:evolution_equation_formulation} we write the heat and the Schr\"{o}dinger equations \eqref{eq:heat_equation_standard_formulation} and \eqref{eq:schrodinger_equation_standard_formulation} in the form given in \Cref{eq:evolution_formulation_simplified}, introducing all the relevant functional spaces and proving that the operator $A$ is self-adjoint on its maximal domain. We then proceed to study the autonomous linearized problems first in the case of a time-independent metric in \Cref{subsec:time_independent_metric} and then with a time-dependent metric in \Cref{subsec:time_dependent_metric}. At last in \Cref{sec:nonlinear} we, focus on the semilinear equations, obtaining the sought local well-posedness result.
\section{Analytic preliminaries}\label{sec:analytic_preliminaries}
In this section we recall some basic facts on the notion of boundary triple and on the theory of evolution semigroups.

\subsection{Autonomous evolution equations}
Consider a Banach space $\mx$, $X_0 \in \mx$ and let $A : D(A) \subset \mx \rightarrow \mx$ be a linear operator. A function $X : \mathbb{R}_0^+ \rightarrow \mx$ is a classical solution to the problem
\begin{equation}\label{eq:acp}
\begin{cases}
\dot{X} = A X(t), \quad t \geq 0\\
X(0) = X_0 
\end{cases}
\end{equation}
if $X \in \mathcal{C}^1(\mathbb{R}_0^+;\mx)$, $X(t) \in D(A)$ and \Cref{eq:acp} is satisfied for every $t \geq 0$. If the operator $A$ is the generator of a one parameter semigroup $T(t)$, then the solution $X$ can be built making use of $T$.
\begin{proposition}[\cite{KlausRainer99}, Proposition 6.2]
Let $(A,D(A))$ be the generator of a strongly continuous semigroup $(T(t))_{t \geq 0}$. Then, for every $X_0 \in D(A)$, the function $X : t \mapsto X(t):=T(t)X_0$ is the unique classical solution of \Cref{eq:acp}.
\end{proposition}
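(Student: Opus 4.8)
The statement is the classical generation theorem for abstract Cauchy problems, and I would prove it by the standard two-step argument, relying only on the correspondence between a strongly continuous semigroup $(T(t))_{t\geq 0}$ and its generator $(A,D(A))$. The two facts I would invoke — both established in \cite{KlausRainer99} just before the cited proposition — are: (i) for $X_0 \in D(A)$ the orbit $t \mapsto T(t)X_0$ lies in $D(A)$ for every $t$ and is differentiable with $\frac{d}{dt}T(t)X_0 = AT(t)X_0 = T(t)AX_0$; and (ii) strong continuity, i.e. $t \mapsto T(t)z$ is continuous for every $z \in \mx$. For existence, set $X(t) := T(t)X_0$. Then $X(0) = T(0)X_0 = X_0$, and by (i) we have $X(t) \in D(A)$ with $\dot X(t) = AX(t)$ for all $t \geq 0$. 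Moreover $\dot X(t) = T(t)AX_0$ depends continuously on $t$ by (ii) applied to the vector $AX_0 \in \mx$, so $X \in \mathcal{C}^1(\mathbb{R}_0^+;\mx)$ and $X$ is a classical solution of \Cref{eq:acp}.

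For uniqueness, let $Y$ be any classical solution. The plan is, for a fixed $t > 0$, to introduce the auxiliary curve $w(s) := T(t-s)Y(s)$ on $[0,t]$ and to show $w$ is constant. Differentiating the product — which is legitimate since $Y(s) \in D(A)$ by the definition of classical solution — and using (i) gives $w'(s) = -T(t-s)AY(s) + T(t-s)\dot Y(s) = -T(t-s)AY(s) + T(t-s)AY(s) = 0$ on $(0,t)$. Since $w$ is continuous on the closed interval $[0,t]$ (again by (ii), boundedness of $\|T(\cdot)\|$ on compacts, and continuity of $Y$), it is constant there, hence $Y(t) = w(t) = w(0) = T(t)X_0 = X(t)$. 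As $t>0$ is arbitrary and the case $t=0$ is immediate, $Y \equiv X$.

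The only point that is not entirely routine is the differentiation of $s \mapsto T(t-s)Y(s)$, which combines the backward-in-$s$ differentiability of a semigroup orbit on $D(A)$ with the differentiability of the abstract curve $Y$. I would justify it by splitting the difference quotient as $\tfrac{w(s+h)-w(s)}{h} = T(t-s-h)\,\tfrac{Y(s+h)-Y(s)}{h} + \tfrac{T(t-s-h)-T(t-s)}{h}\,Y(s)$ and letting $h \to 0$: the first term tends to $T(t-s)\dot Y(s)$ because $\|T(\cdot)\|$ is bounded on $[0,t]$ and $T$ is strongly continuous (so $S_h z_h \to Sz$ whenever $\sup_h\|S_h\| < \infty$, $S_h z \to Sz$ strongly and $z_h \to z$ in norm), while the second tends to $-AT(t-s)Y(s)$ by fact (i); the one-sided derivatives at $s=0$ and $s=t$ are handled in the same way. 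Everything else reduces directly to the generator/semigroup dictionary quoted in the preceding lines, so this is the single step I would write out with care.
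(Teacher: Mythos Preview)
Your proof is correct and is precisely the standard argument given in \cite{KlausRainer99}. Note, however, that the paper does not supply its own proof of this proposition: it is quoted as a preliminary result with a citation and no argument, so there is nothing in the paper to compare against beyond the reference itself.
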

The integral formulation of the problem in \Cref{eq:acp}
\begin{equation}\label{eq:acp_integral}
X(t) = A\int_0^t X(s) ds  + X_0
\end{equation}
may have sense also for initial data $X_0$ merely in $\mx$. We say that a continuous function $X : \mathbb{R}^+_0 \rightarrow \mx$ is a mild solution of \Cref{eq:acp} if $A\int_0^t X(s) ds \in D(A)$ and \eqref{eq:acp_integral} holds true for every $t > 0$.
Also in this case $X$ can be built by means of the  strongly continuous semigroup generated by $A$.
\begin{proposition}[\cite{KlausRainer99}, Proposition 6.4]
Let $(A,D(A))$ be the generator of a strongly continuous semigroup $(T(t))_{t \geq 0}$. Then, for every $X \in \mx$, the orbit map $X : t \mapsto X(t):=T(t)X_0$ is the unique mild solution of \Cref{eq:acp}.
\end{proposition}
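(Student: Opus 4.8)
The plan is to prove the statement in two steps: verify directly that the orbit map is a mild solution, and then establish uniqueness by a time‑reversal (``variation of parameters'') argument. For the existence part I would first recall the elementary identity for $C_0$‑semigroups: for every $x\in\mx$ and $t>0$ one has $\int_0^t T(s)x\,ds\in D(A)$ and $A\int_0^t T(s)x\,ds = T(t)x - x$. This is obtained by applying $\tfrac{T(h)-I}{h}$ to $\int_0^t T(s)x\,ds$, using the semigroup law together with a change of variables to rewrite the result as $\tfrac1h\bigl(\int_t^{t+h}T(s)x\,ds-\int_0^h T(s)x\,ds\bigr)$, and letting $h\to 0^+$; strong continuity of $s\mapsto T(s)x$ yields the limit $T(t)x-x$ and, in particular, the membership of the integral in $D(A)$. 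Applying this with $x=X_0$ and noting that $t\mapsto T(t)X_0$ is continuous (again by strong continuity), one sees that $X(t):=T(t)X_0$ satisfies $A\int_0^t X(s)\,ds\in D(A)$ and $X(t)=A\int_0^t X(s)\,ds+X_0$ for all $t>0$, i.e. it is a mild solution of \Cref{eq:acp}.

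For uniqueness, let $X_1,X_2$ be mild solutions of \Cref{eq:acp} with the same datum $X_0$ and set $W:=X_1-X_2$. Then $W$ is continuous and, subtracting the integral identities, $v(t):=\int_0^t W(s)\,ds\in D(A)$ with $Av(t)=W(t)$ for all $t\ge 0$. Fix $t>0$ and set $g(s):=T(t-s)v(s)$ for $s\in[0,t]$. Since $v$ is continuously differentiable with $v'(s)=W(s)$, and $v(s)\in D(A)$ with $Av(s)=W(s)$, the function $g$ is differentiable with $g'(s)=-T(t-s)Av(s)+T(t-s)v'(s)=-T(t-s)W(s)+T(t-s)W(s)=0$. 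Hence $g$ is constant on $[0,t]$, so $v(t)=g(t)=g(0)=T(t)v(0)=0$. As $t>0$ is arbitrary and $W$ is continuous, differentiating the identity $v\equiv 0$ gives $W\equiv 0$, which is the desired uniqueness.

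The only point requiring genuine care — hence the main obstacle — is the differentiation rule $\frac{d}{ds}\bigl(T(t-s)v(s)\bigr)=-T(t-s)Av(s)+T(t-s)v'(s)$, since $v'(s)=W(s)$ need not lie in $D(A)$. I would justify it by splitting the difference quotient as $T(t-s-h)\,\tfrac{v(s+h)-v(s)}{h}-T(t-s-h)\,\tfrac{T(h)-I}{h}\,v(s)$ and passing to the limit $h\to 0$ using the local boundedness of $\{T(r)\}$ on compact time intervals together with strong continuity: the first term converges to $T(t-s)v'(s)$ and the second to $-T(t-s)Av(s)$ because $v(s)\in D(A)$. Everything else in the argument is a routine manipulation of the semigroup.
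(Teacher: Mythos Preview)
The paper does not give its own proof of this proposition; it is quoted verbatim from \cite{KlausRainer99} as background material. Your argument is correct and is essentially the standard proof one finds in that reference: the existence part is the well-known identity $A\int_0^t T(s)x\,ds=T(t)x-x$, and the uniqueness is the ``variation of constants'' trick $g(s)=T(t-s)v(s)$ applied to the primitive $v$ of the difference of two mild solutions. Your careful splitting of the difference quotient to justify differentiability of $g$ (avoiding the unwarranted assumption $v'(s)\in D(A)$) is exactly the right point to isolate, and your treatment of it is sound. One cosmetic remark: what you actually prove (and what is needed) is $\int_0^t X(s)\,ds\in D(A)$, not $A\int_0^t X(s)\,ds\in D(A)$ as written; the latter is a typo inherited from the paper's statement of the definition.
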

\begin{remark}
Classical solutions are also solutions in the mild sense. 
\end{remark}
Let us now consider the inhomogeneous autonomous equation
\begin{equation}\label{eq:inhomogeneous_problem}
\begin{cases}
\dot{X} = A X(t) + F(t), \quad t \in [0,T]\\
X(0) = X_0
\end{cases}
\end{equation}
with $A$ the generator of a strongly continuous semigroup $(T(t))_{t \geq 0}$ and let $F: [0,T] \rightarrow \mx$ such that $\int_0^\delta \|F(s)\|_\mx ds \leq \infty$ for every $\delta \in (0,T]$.
The classical solution $X(t)\in C^1((0,T],\mx) \cap C^0([0,T],\mx)$ of this inhomogenous problem is given Duhamel's formula
\begin{equation}\label{eq:acpsol_source}
X(t) = T(t)X_0 + \int_0^t T(t-s)F(s) ds.
\end{equation}
As in the homogeneous case, the integral formulation may admit a mild solution in $\mathcal{C}^0([0,T],\mx)$ whenever \Cref{eq:acpsol_source} makes sense.
\begin{proposition}[\cite{Sch20}, Theorem 2.9]\label{prop:inhomogeneous_evolution_well_posedness_1}
Let $(A,D(A))$ be the generator of a strongly continuous semigroup $(T(t))_{t \geq 0}$. Let $X_0 \in \mathcal{D}(A)$. Assume that either $F \in \mathcal{C}^1([0,T],\mx)$ or that $F \in \mathcal{C}^0([0,T],\mathcal{D}(A))$. Then the function $X$ given by \Cref{eq:acpsol_source} is the unique classical solution of the inhomogeneous problem \eqref{eq:inhomogeneous_problem} on $[0,T]$. Furthermore, if the initial datum $X_0 \in \mx$ the solution $u \in C^0([0,T],\mx)$ given by \Cref{eq:acpsol_source} is the unique mild solution of the problem. 
\end{proposition}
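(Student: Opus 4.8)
The plan is to reduce everything to the variation-of-constants representation in \Cref{eq:acpsol_source}, i.e. to write $X(t)=T(t)X_0+w(t)$ with $w(t):=\int_0^t T(t-s)F(s)\,ds$, and to lean on two elementary properties of $(T(t))_{t\ge 0}$: that $M:=\sup_{t\in[0,T]}\|T(t)\|<\infty$, and that $\int_0^t T(s)y\,ds\in D(A)$ with $A\int_0^t T(s)y\,ds=T(t)y-y$ for every $y\in\mx$. I would first settle the mild-solution statement and then upgrade to the classical one under each of the two regularity hypotheses on $F$.

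For the mild case I would argue as follows. For $X_0\in\mx$ and $F$ Bochner integrable the integrand $\sigma\mapsto T(\sigma)F(t-\sigma)$ is integrable on $[0,t]$, so $w$ is well defined, and an $\varepsilon/3$ estimate using strong continuity of $T$, the bound $M$, and absolute continuity of $s\mapsto\int_0^s\|F(r)\|_\mx\,dr$ shows $w\in\mathcal{C}^0([0,T],\mx)$, hence $X\in\mathcal{C}^0([0,T],\mx)$. To check that $X$ satisfies the integrated form of \eqref{eq:inhomogeneous_problem}, namely $X(t)=X_0+A\int_0^t X(s)\,ds+\int_0^t F(s)\,ds$, I would apply the identity above to $T(t)X_0$, interchange the order of the two integrals in $\int_0^t w(s)\,ds$ by Fubini for Bochner integrals, and then use closedness of $A$ to pull $A$ outside the outer integral. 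For uniqueness, if $Y$ is a second mild solution and $W:=X-Y$, then $v(s):=\int_0^s W(r)\,dr\in D(A)$ with $Av(s)=W(s)$, and the auxiliary map $s\mapsto T(t-s)v(s)$ is differentiable on $[0,t]$ with derivative $T(t-s)\bigl(W(s)-Av(s)\bigr)=0$; integrating over $[0,t]$ gives $v(t)=0$ for all $t$, so $W=v'\equiv0$.

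For the classical case, since $X_0\in D(A)$ the term $T(t)X_0$ is already of class $\mathcal{C}^1$ with $\frac{d}{dt}T(t)X_0=AT(t)X_0=T(t)AX_0$, so it remains to show $w\in\mathcal{C}^1([0,T],\mx)$, $w(t)\in D(A)$, and $w'(t)=Aw(t)+F(t)$; this then yields $X'=AX+F$ and $X(0)=X_0$, together with the stated continuity. If $F\in\mathcal{C}^1$, I would differentiate the form $w(t)=\int_0^t T(\sigma)F(t-\sigma)\,d\sigma$ to obtain the continuous function $w'(t)=T(t)F(0)+\int_0^t T(\sigma)F'(t-\sigma)\,d\sigma$, and then read off from the identity $\frac{T(h)-I}{h}w(t)=\frac{w(t+h)-w(t)}{h}-\frac1h\int_t^{t+h}T(t+h-s)F(s)\,ds$, on letting $h\downarrow0$, that $w(t)\in D(A)$ with $Aw(t)=w'(t)-F(t)$. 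If instead $F\in\mathcal{C}^0([0,T],D(A))$, I would first note that $\int_0^t T(t-s)AF(s)\,ds$ is continuous and, by closedness of $A$ applied to Riemann sums, equals $Aw(t)$, and then obtain differentiability of $w$ with $w'(t)=F(t)+Aw(t)$ by writing $\frac{w(t+h)-w(t)}{h}=\frac1h\int_t^{t+h}T(t+h-s)F(s)\,ds+\int_0^t\frac{T(h)-I}{h}T(t-s)F(s)\,ds$ and passing to the limit, using that $\{AF(s):s\in[0,T]\}$ is compact so that $\frac{T(h)-I}{h}T(\sigma)y\to AT(\sigma)y$ uniformly in the relevant parameters. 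Uniqueness of the classical solution is then automatic, since a classical solution is in particular a mild solution and mild solutions are unique.

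The step I expect to be the main obstacle is precisely the justification of these exchanges of limit and integral that place $w(t)$ in $D(A)$ and identify $Aw(t)$ in each of the two regularity regimes — it is here that closedness of $A$ and the uniform (not merely pointwise) convergence of the difference quotients $\frac{T(h)-I}{h}T(\sigma)y$ on compact sets of parameters are used; the remaining manipulations with \Cref{eq:acpsol_source} are routine bookkeeping.
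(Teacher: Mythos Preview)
The paper does not supply its own proof of this proposition: it is quoted verbatim as \cite[Theorem 2.9]{Sch20} and used as a black box, so there is no argument in the paper to compare against. Your outline is the standard variation-of-constants proof one finds in the cited lecture notes (or in Pazy, Engel--Nagel, etc.): split $X=T(\cdot)X_0+w$, handle the mild statement via Fubini and closedness of $A$, and in the classical statement treat the two regularity regimes for $F$ by differentiating the convolution in the $\mathcal{C}^1$ case and by pushing $A$ inside the integral in the $\mathcal{C}^0([0,T],\mathcal{D}(A))$ case. The steps you flag as delicate --- the difference-quotient identity $\tfrac{T(h)-I}{h}w(t)=\tfrac{w(t+h)-w(t)}{h}-\tfrac{1}{h}\int_t^{t+h}T(t+h-s)F(s)\,ds$ and the use of closedness of $A$ to identify $Aw(t)$ --- are exactly the points where the work lies, and your treatment of them is correct. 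In short: nothing to reconcile with the paper, and your proposal is a complete and correct sketch of the textbook argument.
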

If the source term $F$ satisfies additional regularity hypotheses, we can improve both the spatial and the temporal regularity of the solution.
\begin{proposition}[\cite{Can18}, Theorems 16,17]\label{prop:inhomogeneous_evolution_well_posedness_2}
Let $X_0 \in \mathcal{D}(A)$ and assume that $F$ is either in $L^2([0,T],\mx) \cap \mathcal{C}^0((0,T],\mx)$ or $H^1([0,T],\mx)$. Then the mild solution $X$ of \Cref{prop:inhomogeneous_evolution_well_posedness_1} is a strict solution, namely $X \in \mathcal{C}^1([0,T],\mx) \cap \mathcal{C}^0([0,T],\mathcal{D}(A))$.
\end{proposition}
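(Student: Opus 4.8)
\emph{Proof sketch.} The plan is to reduce to a problem with zero initial datum and then treat the two regularity hypotheses on $F$ separately. By the mild-solution part of \Cref{prop:inhomogeneous_evolution_well_posedness_1}, the unique mild solution is $X(t)=T(t)X_0+w(t)$ with $w(t):=\int_0^t T(t-s)F(s)\,ds$; since $X_0\in\md(A)$, the homogeneous part $t\mapsto T(t)X_0$ already lies in $\mathcal{C}^1([0,T],\mx)\cap\mathcal{C}^0([0,T],\md(A))$, so it is enough to show that $w\in\mathcal{C}^1([0,T],\mx)\cap\mathcal{C}^0([0,T],\md(A))$ with $\dot w=Aw+F$.

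For the hypothesis $F\in H^1([0,T],\mx)$ I would argue directly, using only that $A$ generates a $C_0$-semigroup. Since $H^1([0,T],\mx)\hookrightarrow\mathcal{C}^0([0,T],\mx)$, the map $F$ is absolutely continuous and $F(s)=F(0)+\int_0^s F'(r)\,dr$; inserting this into $w$, changing variables, and applying Fubini's theorem (legitimate because $\sup_{s\in[0,T]}\|T(s)\|<\infty$ and $F'\in L^1([0,T],\mx)$) yields
\[
w(t)=\Bigl(\int_0^t T(\sigma)\,d\sigma\Bigr)F(0)+\int_0^t\Bigl(\int_0^{t-r}T(\sigma)\,d\sigma\Bigr)F'(r)\,dr.
\]
Each operator $y\mapsto\int_0^\tau T(\sigma)y\,d\sigma$ maps $\mx$ into $\md(A)$ with $A\int_0^\tau T(\sigma)y\,d\sigma=T(\tau)y-y$; since $A$ is closed and $r\mapsto T(t-r)F'(r)-F'(r)$ belongs to $L^1([0,t],\mx)$, one may pull $A$ inside the second integral to obtain $w(t)\in\md(A)$ for every $t$, with
\[
Aw(t)=T(t)F(0)+\int_0^t T(t-r)F'(r)\,dr-F(t),
\]
an expression that is manifestly continuous in $t$, whence $w\in\mathcal{C}^0([0,T],\md(A))$. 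Once $w(t)\in\md(A)$ is known, the standard difference quotient $h^{-1}\bigl(w(t+h)-w(t)\bigr)=h^{-1}\bigl(T(h)-I\bigr)w(t)+h^{-1}\int_t^{t+h}T(t+h-s)F(s)\,ds\to Aw(t)+F(t)$ (using $w(t)\in\md(A)$ and the continuity of $F$) shows $w\in\mathcal{C}^1([0,T],\mx)$ with $\dot w=Aw+F$. Equivalently, one may approximate $F$ by $F_n\in\mathcal{C}^1([0,T],\mx)$ in $H^1([0,T],\mx)$, apply the classical-solution part of \Cref{prop:inhomogeneous_evolution_well_posedness_1} to each $F_n$, and pass to the limit using the displayed formula for $Aw_n$ as an a priori bound together with the closedness of $A$.

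The genuinely subtle case is $F\in L^2([0,T],\mx)\cap\mathcal{C}^0((0,T],\mx)$, in which $F$ may be unbounded at $t=0$ and mere continuity of the data does not suffice for a strict solution of an arbitrary $C_0$-semigroup problem. Here I would use that in every situation where this proposition is invoked $\mx$ is a Hilbert space and $A$ generates an analytic semigroup (cf.\ \Cref{prop:analytic_semigroup}): de Simon's maximal $L^2$-regularity theorem then gives $w\in H^1([0,T],\mx)\cap L^2([0,T],\md(A))$ with $\dot w=Aw+F$ for a.e.\ $t$, and in particular $w\in\mathcal{C}^0([0,T],\mx)$ with $w(0)=0\in\md(A)$. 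On each interval $[\varepsilon,T]$ with $\varepsilon\in(0,T)$, one upgrades this to $w\in\mathcal{C}^1([\varepsilon,T],\mx)\cap\mathcal{C}^0([\varepsilon,T],\md(A))$ by writing $w(t)=T(t-\varepsilon)w(\varepsilon)+\int_\varepsilon^t T(t-s)F(s)\,ds$, with $w(\varepsilon)\in\md(A)$ for almost every $\varepsilon$ and $F$ continuous on $[\varepsilon,T]$, and repeating the argument of the $H^1$ case there (or invoking the analytic smoothing directly). I expect the main obstacle to be the behaviour at the left endpoint: obtaining continuity of $t\mapsto Aw(t)$ up to $t=0$, i.e.\ $\|Aw(t)\|_\mx\to0$ as $t\to0^+$, requires controlling $Aw(t)=\int_0^t AT(t-s)F(s)\,ds$ through the quantitative maximal-regularity estimate — the constant being uniform for a bounded analytic semigroup, so that $\|Aw\|_{L^2(0,t)}\le C\|F\|_{L^2(0,t)}\to0$ — together with $w(0)=0$; I would carry out this last step following the precise formulation of \cite{Can18}, noting that it is in line with the weaker interior regularity (valid only on $[\varepsilon,T]$) stated after \Cref{prop:low_regularity_non_auto_well_posedness} in the heat case.
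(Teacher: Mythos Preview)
The paper does not give its own proof of this proposition: it is stated as a citation of \cite{Can18}, Theorems 16 and 17, with no proof environment following. So there is nothing to compare your argument against at the level of strategy; the paper simply imports the result.

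That said, your sketch is sound in outline. The reduction to $w(t)=\int_0^t T(t-s)F(s)\,ds$ with $X_0\in\md(A)$ handled separately is the standard opening, and your treatment of the $H^1([0,T],\mx)$ case via $F(s)=F(0)+\int_0^s F'(r)\,dr$, Fubini, and the identity $A\int_0^\tau T(\sigma)y\,d\sigma=T(\tau)y-y$ is the classical argument (essentially Pazy, Chapter 4).

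You are also right to flag the second hypothesis $F\in L^2([0,T],\mx)\cap\mathcal{C}^0((0,T],\mx)$ as the delicate one: for a bare $C_0$-semigroup this conclusion is \emph{not} available in general, and your appeal to maximal $L^2$-regularity (de Simon) implicitly adds the assumptions ``$\mx$ Hilbert'' and ``$A$ sectorial'' that are indeed satisfied in every application made later in the paper (cf.\ \Cref{prop:operator_a_selfadjoint} and \Cref{prop:analytic_semigroup}) but are not part of the proposition as stated. The reference \cite{Can18} presumably works in that analytic/Hilbert setting; if you want your write-up to stand on its own, make those extra hypotheses explicit rather than inferring them from downstream use. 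Your endpoint argument at $t=0$ via the uniform maximal-regularity constant is the right idea, but note that $\|Aw\|_{L^2(0,t)}\to0$ gives $Aw(t)\to0$ only along a subsequence; to get genuine continuity at $0$ you should instead use the interpolation/trace inequality $\|w(t)\|_{\md(A)}\le C\bigl(\|w\|_{H^1(0,t;\mx)}+\|w\|_{L^2(0,t;\md(A))}\bigr)$ together with $w(0)=0$.
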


\subsection{Analytic semigroups}

\begin{definition}
A linear operator $A : \mathcal{D}(A) \subset \mx \rightarrow \mx$ is sectorial if there exist three constants $\omega \in \mathbb{R}$, $\theta \in (\pi/2,\pi)$ and $M > 0$ such that the following two conditions are satisfied:
\begin{itemize}
\item[i)] The resolvent set $\rho(A)$ contains the set $$S_{\theta,\omega} = \{ \lambda \in \mathbb{C}, \ \lambda \neq \omega, \ \textit{such that} \ |arg(\lambda-\omega)|<\theta \}$$
\item[ii)] For every $\lambda \in S_{\theta,\omega}$ 
$$\| R(\lambda,A)\|_\mx \leq \dfrac{M}{|\lambda-\omega|} $$
\end{itemize}
\end{definition}
Thanks to the first resolvent identity, to satisfy condition i) it is enough to check if the resolvent set $\rho(A)$ contains a certain halfplane of $\mathbb{C}$, as staed in the following proposition.
\begin{proposition}[\cite{LLMP05}, Proposition 1.3.12]\label{prop:sectorial_simple_conditions}
Let $A : \mathcal{D}(A)  \subset \mx \rightarrow \mx$ be a linear operator such that $\rho(A)$ contains a halfplane $\{ \lambda \in \mathbb{C} \ : \ Re \ \lambda \geq \omega \}$, for some $\omega \geq 0$, and 
$\| \lambda R(\lambda,A) \|_\mx \leq M$, with $Re \ \lambda \geq \omega$, for some $M \geq 1$. Then $A$ is sectorial.
\end{proposition}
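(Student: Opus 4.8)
The plan is to enlarge the region on which the resolvent of $A$ is known to exist, passing from the halfplane $\{\operatorname{Re}\lambda \ge \omega\}$ to a sector $S_{\theta,\omega'}$, and this is possible precisely because the hypothesis controls $\|R(\lambda,A)\|$ by the scaling-homogeneous quantity $M/|\lambda|$ rather than merely by a constant.

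First I would reduce to the case $\omega=0$ by translation. Since the halfplane is nonempty, $A$ is closed; for the operator $A-\omega I$ one has $R(\lambda,A-\omega I)=R(\lambda+\omega,A)$, so $\rho(A-\omega I)\supset\{\operatorname{Re}\lambda\ge 0\}$, and for $\operatorname{Re}\lambda\ge 0$ the inequality $|\lambda+\omega|^{2}=|\lambda|^{2}+2\omega\operatorname{Re}\lambda+\omega^{2}\ge|\lambda|^{2}$ (this is the one place the hypothesis $\omega\ge 0$ enters) gives $\|\lambda R(\lambda,A-\omega I)\|=\tfrac{|\lambda|}{|\lambda+\omega|}\,\|(\lambda+\omega)R(\lambda+\omega,A)\|\le M$. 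Since $A-\omega I$ being sectorial with vertex $0$ is equivalent to $A$ being sectorial with vertex $\omega$ — only the constant changing — it suffices to prove the statement for $\omega=0$.

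The core step is to continue the resolvent across the imaginary axis by a Neumann series. For any $\mu\in\rho(A)$ one has $B(\mu,r)\subset\rho(A)$ whenever $r<\|R(\mu,A)\|^{-1}$, with $R(\lambda,A)=\sum_{n\ge 0}(\mu-\lambda)^{n}R(\mu,A)^{n+1}$ and $\|R(\lambda,A)\|\le\|R(\mu,A)\|\,\big(1-|\mu-\lambda|\,\|R(\mu,A)\|\big)^{-1}$ on that ball. I would apply this with $\mu=i\operatorname{Im}\lambda$, where $\|R(\mu,A)\|\le M/|\mu|=M/|\operatorname{Im}\lambda|$: the ball of analyticity about $\mu$ then has radius at least $|\operatorname{Im}\lambda|/M$, a radius growing linearly in $|\mu|$. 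Hence every $\lambda$ with $\operatorname{Re}\lambda<0$ and $|\operatorname{Re}\lambda|\le|\operatorname{Im}\lambda|/(2M)$ lies in $\rho(A)$; together with $\{\operatorname{Re}\lambda\ge 0\}$ this region is exactly the sector $S_{\theta,0}$ with $\theta=\tfrac{\pi}{2}+\arctan\tfrac{1}{2M}\in(\tfrac{\pi}{2},\pi)$. On $S_{\theta,0}$ the resolvent estimate is then immediate: on the halfplane part the hypothesis gives $\|R(\lambda,A)\|\le M/|\lambda|$ directly, while on the part with $\operatorname{Re}\lambda<0$ the constraint $|\operatorname{Re}\lambda|\le|\operatorname{Im}\lambda|/(2M)$ keeps the Neumann denominator above $1/2$ and forces $|\operatorname{Im}\lambda|\ge c\,|\lambda|$ for an absolute $c>0$, so $\|R(\lambda,A)\|\le 2M/|\operatorname{Im}\lambda|\le(2M/c)/|\lambda|$. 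Taking $M'=\max\{M,2M/c\}$ and undoing the translation yields sectoriality of $A$ with constants $\omega$, $\theta$, $M'$.

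I do not expect a genuine obstacle: the one thing worth isolating is that the homogeneous decay $\|R(\lambda,A)\|=O(|\lambda|^{-1})$ is the crucial feature — a uniform bound $\|R(\lambda,A)\|\le M$ alone would only widen the halfplane by a fixed amount and never open into a sector. Once that is seen, the rest is the standard Neumann-series continuation, and all that remains are the two pieces of elementary planar geometry above: that the halfplane together with the extended wedge is a genuine sector of half-angle in $(\pi/2,\pi)$, and that the resolvent bound is uniform of order $|\lambda|^{-1}$ on it.
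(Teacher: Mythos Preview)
The paper does not supply its own proof of this proposition: it is quoted verbatim as a known result from \cite{LLMP05}, Proposition~1.3.12, and used as a black box. Your argument is correct and is in fact the standard proof one finds in that reference --- translate to $\omega=0$, then use the Neumann-series expansion of the resolvent about purely imaginary centers $\mu=i\operatorname{Im}\lambda$, where the hypothesis $\|R(\mu,A)\|\le M/|\mu|$ makes the radius of analyticity grow linearly in $|\mu|$, opening the halfplane into a genuine sector of half-angle $\tfrac{\pi}{2}+\arctan\tfrac{1}{2M}$; the resolvent bound on the new wedge follows from the same series with the denominator kept above $1/2$. There is nothing to correct and nothing substantively different to compare against.
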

\begin{remark}\label{remark:selfadjoint_sectorial}
The condition on the resolvent set is satisfied in particular for self-adjoint operators with negative spectrum.
\end{remark}
The reason we are interested in sectorial operators with dense domains is that they are the generators of smoothing semigroups.\begin{proposition}[\cite{Sch20}, Theorem 2.25]
\label{prop:analytic_semigroup_properties}
Let $A$ be the generator of an analytic semigroup $T(t)$. Then for every $t > 0$ and $n>0$ we have $T(t) \mx \subseteq(\mathcal{D}(A^n))$, $\| A^n T(t) \| \leq M_n t^{-n}$ and $T(\cdot)\in \mathcal{C}^\infty(\mathbb{R}_+,\mathcal{B}(\mx))$.
\end{proposition}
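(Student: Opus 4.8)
The plan is to exploit the Dunford--Cauchy (contour) representation of an analytic semigroup and differentiate under the integral sign. Recall that if $A$ is sectorial with parameters $\omega,\theta,M$, then the semigroup it generates admits the representation
\[
T(t)=\frac{1}{2\pi i}\int_{\gamma}e^{\lambda t}\,R(\lambda,A)\,d\lambda ,\qquad t>0,
\]
where, for a fixed $\theta'\in(\pi/2,\theta)$, $\gamma\subset\rho(A)$ is a contour running from $\omega-\infty e^{-i\theta'}$ to $\omega-\infty e^{+i\theta'}$ around the spectral sector, for instance the two rays $\omega+re^{\pm i\theta'}$, $r\ge r_0$, joined by the arc $\omega+r_0e^{i\phi}$, $|\phi|\le\theta'$. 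On $\gamma$ one has $\|R(\lambda,A)\|\le M/|\lambda-\omega|$ and $\operatorname{Re}\lambda\le\omega-c\,|\lambda-\omega|$ with $c=-\cos\theta'>0$, so the integrand decays exponentially along the rays and the integral converges absolutely in $\mathcal{B}(\mx)$; that it represents the semigroup of the statement is the standard characterization theorem (e.g.\ \cite{LLMP05}).

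For the smoothness, multiplying the integrand by $\lambda^{n}$ does not destroy absolute convergence, locally uniformly in $t>0$, because the exponential factor $e^{\operatorname{Re}(\lambda)t}$ dominates any polynomial in $|\lambda|$. Hence one may differentiate under the integral sign to get
\[
\frac{d^{n}}{dt^{n}}T(t)=\frac{1}{2\pi i}\int_{\gamma}\lambda^{n}e^{\lambda t}\,R(\lambda,A)\,d\lambda\in\mathcal{B}(\mx),
\]
with $t\mapsto T^{(n)}(t)$ continuous on $(0,\infty)$ (indeed holomorphic on a sector), whence $T(\cdot)\in\mathcal{C}^{\infty}(\mathbb{R}_{+},\mathcal{B}(\mx))$. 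Next I would show $T(t)\mx\subseteq\mathcal{D}(A^{n})$. Using the resolvent identity $AR(\lambda,A)=\lambda R(\lambda,A)-I$ together with $\int_{\gamma}e^{\lambda t}\,d\lambda=0$ (Cauchy's theorem: $e^{\lambda t}$ is entire and decays when $\gamma$ is closed on the left), for $x\in\mx$ the integral $\int_{\gamma}e^{\lambda t}AR(\lambda,A)x\,d\lambda=\int_{\gamma}\lambda e^{\lambda t}R(\lambda,A)x\,d\lambda$ converges, $R(\lambda,A)x\in\mathcal{D}(A)$, and $A$ is closed, so $T(t)x\in\mathcal{D}(A)$ with $AT(t)=T'(t)$ as a bounded operator for every $t>0$. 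The general $n$ then follows from the semigroup law $T(t)=T(t/n)^{n}$ and the commutation $T(s)A=AT(s)$ on $\mathcal{D}(A)$: since each factor maps $\mx$ into $\mathcal{D}(A)$, an induction gives $T(t)\mx\subseteq\mathcal{D}(A^{n})$ and $A^{n}T(t)=\bigl(AT(t/n)\bigr)^{n}$.

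Finally, the bound. It suffices to prove $\|AT(t)\|\le M_{1}/t$ for all $t>0$; the general case then follows from $A^{n}T(t)=(AT(t/n))^{n}$ with $M_{n}=(M_{1}n)^{n}$. For $n=1$ I would deform $\gamma$ to the $t$-dependent contour $\gamma_{t}$ formed by the rays $\omega+re^{\pm i\theta'}$, $r\ge 1/t$, and the arc $\omega+e^{i\phi}/t$, $|\phi|\le\theta'$ (permissible since the region between $\gamma$ and $\gamma_t$ lies in $S_{\theta',\omega}\subseteq\rho(A)$, where the integrand is holomorphic), and substitute $\lambda=\omega+\mu/t$. Then $\|R(\lambda,A)\|\le Mt/|\mu|$, $|e^{\lambda t}|=e^{\omega t}|e^{\mu}|$, $d\lambda=d\mu/t$, and
\[
\|AT(t)\|\le\frac{M e^{\omega t}}{2\pi}\int_{\gamma_{1}}\Bigl(|\omega|+\tfrac{|\mu|}{t}\Bigr)\,|e^{\mu}|\,\frac{|d\mu|}{|\mu|},
\]
where $\gamma_{1}$ is the $t$-independent contour obtained after the substitution; the integral converges because $|\mu|\ge 1$ on $\gamma_{1}$ and $|e^{\mu}|$ decays along the rays. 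This gives $\|AT(t)\|\le C e^{\omega t}(1+t^{-1})$, and since in the situation of interest $A$ has spectrum in $(-\infty,0)$ (cf.\ \Cref{remark:selfadjoint_sectorial}) one has $\omega<0$, so the factor $e^{\omega t}$ is bounded and even decaying; one concludes $\|A^{n}T(t)\|\le M_{n}t^{-n}$ for all $t>0$. The main technical point is the choice of the $t$-scaled contour and the verification of convergence near $\lambda=\omega$; the rest is routine manipulation of the sectorial estimates.
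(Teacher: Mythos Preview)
The paper does not prove this proposition; it is quoted verbatim from \cite{Sch20} as a preliminary result and carries no proof in the text. So there is nothing to compare against on the paper's side.

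Your argument is the standard Dunford--Cauchy contour proof and is essentially correct: the integral representation, differentiation under the integral, the use of closedness of $A$ together with $AR(\lambda,A)=\lambda R(\lambda,A)-I$ to land in $\mathcal{D}(A)$, and the $t$-scaled contour $\lambda=\omega+\mu/t$ for the bound are exactly the ingredients one finds in Lunardi, Pazy, or Engel--Nagel. The factorization $A^{n}T(t)=(AT(t/n))^{n}$ via the semigroup law is also the usual way to pass from $n=1$ to general $n$.

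One minor remark: the estimate you obtain is $\|AT(t)\|\le Ce^{\omega t}(1+t^{-1})$, and you then absorb the exponential by invoking $\omega<0$ ``in the situation of interest''. But the proposition is stated for a general analytic semigroup, not just for the specific operator of the paper, and in that generality the bound is $\|A^{n}T(t)\|\le M_{n}e^{\omega t}t^{-n}$ (or $M_{n}t^{-n}$ only on bounded time intervals, with $M_{n}$ depending on the interval). This is a cosmetic mismatch with the proposition as written rather than a gap in your reasoning; the paper's own formulation is already slightly loose on this point.
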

In the case of inhomogeneous problems, we have the following result.
\begin{proposition}[\cite{Sch20}, Theorem 2.30]\label{prop:smoothing_analytic}
Let $X \in \mx$, $b>0$, $f\in \mathcal{C}^0([0,b],\mx)$ and $A-\omega \mathbb{I}$ be a densely defined sectorial operator for some $\omega \in \mathbb{R}$. Then the mild solution $X$ of the problem \eqref{eq:inhomogeneous_problem} satisfies the following assertions:
\begin{itemize}
\item[a)] $X$ belongs to the H\"{o}lder space $\mathcal{C}^\beta([\varepsilon,b],\mx)$ for every $\beta \in (0,1)$ and $\varepsilon \in (0,b)$.
\item[b)] If $f \in \mathcal{C}^\alpha([0,b],\mx)$ for some $\alpha > 0$, then $X$ is a classical solution on $[\varepsilon,b]$ for every $\varepsilon \in (0,b)$.
\end{itemize}
Furthermore, if $X_0 \in \mathcal{D}(A)$, then we can take $\varepsilon=0$ in the above assertions.
\end{proposition}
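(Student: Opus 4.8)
The plan is to work directly with Duhamel's formula \eqref{eq:acpsol_source}, writing the mild solution as $X(t) = T(t)X_0 + v(t)$ with $v(t) = \int_0^t T(t-s)f(s)\,ds$, and to analyze the two summands separately. A preliminary reduction: replacing $A$ by $A-\omega\mathbb{I}$ and $f$ by $e^{-\omega t}f$ turns the problem into one driven by a bounded analytic semigroup, and since on the compact interval $[0,b]$ the map $Y \mapsto e^{\omega\,\cdot}Y$ preserves H\"older regularity and membership in $\md(A)$, I may assume $\omega = 0$; then \Cref{prop:analytic_semigroup_properties} gives $\|T(t)\| \le M$ and $\|AT(t)\| \le M_1 t^{-1}$ for $t>0$, which is the only quantitative input I will use.

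For part a), the summand $T(t)X_0$ is smooth on $(0,b]$ because $T(t)X_0 \in \md(A^n)$ there; on $[\varepsilon,b]$ the bound $\|T(t+h)X_0 - T(t)X_0\| \le \int_0^h \|T(s)\,AT(t)X_0\|\,ds \le MM_1\varepsilon^{-1}\|X_0\|\,h$ yields even Lipschitz continuity (and Lipschitz continuity up to $t=0$ when $X_0\in\md(A)$, using $T(t+h)X_0 - T(t)X_0 = \int_0^h T(t+s)AX_0\,ds$). For the convolution I would split $v(t+h) - v(t) = \int_t^{t+h}T(t+h-s)f(s)\,ds + \int_0^t (T(h)-\mathbb{I})T(t-s)f(s)\,ds$; the first term is $O(h)$, and for the second, writing $(T(h)-\mathbb{I})T(\sigma)x = \int_0^h AT(\sigma+r)x\,dr$ and using $\|AT(\sigma+r)\| \le M_1/(\sigma+r)$, the substitution $\sigma = t-s$ gives a bound $M_1\|f\|_\infty\int_0^t \log\frac{\sigma+h}{\sigma}\,d\sigma$, which evaluates to $O\big(h\log(1/h)\big)$ and is therefore $\le C_\beta h^\beta$ for every $\beta\in(0,1)$. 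This shows $v\in\mathcal{C}^\beta([0,b],\mx)$ for all such $\beta$, so in fact the only role of $\varepsilon$ is to absorb the behaviour of $T(t)X_0$ near $0$ when $X_0\notin\md(A)$.

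For part b), the extra input of H\"older continuity of $f$ is exactly what forces $v(t)\in\md(A)$. I would first make sense of the (only conditionally convergent) integral $\int_0^t AT(t-s)f(s)\,ds$ via the splitting $\int_0^t AT(t-s)\big(f(s)-f(t)\big)\,ds + (T(t)-\mathbb{I})f(t)$, in which the first integrand is bounded by $M_1[f]_\alpha(t-s)^{\alpha-1}$ (hence integrable) and the second term comes from $\frac{d}{ds}T(t-s) = -AT(t-s)$. Then I would approximate $v(t)$ by $v_\delta(t) = \int_0^{t-\delta}T(t-s)f(s)\,ds \in \md(A)$, note that $Av_\delta(t) = \int_0^{t-\delta}AT(t-s)f(s)\,ds$, let $\delta\to 0$, and use the closedness of $A$ to conclude $v(t)\in\md(A)$ with $Av(t)$ given by the split expression; continuity of $t\mapsto Av(t)$ then follows by dominated convergence. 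Finally, differentiability with $\dot v(t) = Av(t) + f(t)$ comes from the difference quotient: $\frac1h\int_t^{t+h}T(t+h-s)f(s)\,ds \to f(t)$ by continuity of $f$, while $\frac1h\int_0^t(T(h)-\mathbb{I})T(t-s)f(s)\,ds = \frac{T(h)-\mathbb{I}}{h}\,v(t) \to Av(t)$ precisely because $v(t)\in\md(A)$. Adding $T(\cdot)X_0$, which solves the homogeneous equation classically on $(0,b]$ and on $[0,b]$ when $X_0\in\md(A)$, shows $X$ is a classical solution on $[\varepsilon,b]$, and on all of $[0,b]$ if $X_0\in\md(A)$.

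The main obstacle is the conditionally convergent integral $\int_0^t AT(t-s)f(s)\,ds$ of part b): without the H\"older hypothesis the integrand is merely $O((t-s)^{-1})$ and $v(t)$ need not lie in $\md(A)$, so the argument hinges entirely on trading the H\"older modulus of $f$ against the sectorial estimate $\|AT(\tau)\|\le M_1/\tau$ and then invoking closedness of $A$ to pass to the limit $\delta\to 0$. By comparison, the H\"older estimates in part a) are routine once the logarithmic bound above is in place.
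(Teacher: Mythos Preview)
Your argument is correct and is the standard textbook proof of this result; however, the paper does not actually prove \Cref{prop:smoothing_analytic} at all --- it is stated in the preliminaries section as a known fact, cited verbatim from \cite[Theorem~2.30]{Sch20}, and invoked later without further justification. So there is nothing to compare against: your reduction to a bounded analytic semigroup, the $O(h\log(1/h))$ estimate for the convolution term in part~a), and the splitting $\int_0^t AT(t-s)(f(s)-f(t))\,ds + (T(t)-\mathbb{I})f(t)$ together with closedness of $A$ in part~b) constitute exactly the classical argument one finds in the cited reference (or in Lunardi, Pazy, etc.).
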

\subsection{Non-autonomous evolution equations}
Now we focus on non autonomous problems of the form
\begin{equation}\label{eq:non_autonomous_homogeneous}
\begin{cases}
\dot{X} = A(t)X(t) + F(t)\\
X(0) = X_0, \quad t \in [0,T]
\end{cases}
\end{equation}
with $F(t) : [0,T] \rightarrow \mx$, assuming that $\mathcal{D}(A(t))= \mathcal{D}(A(0))$ for every $t \in [0,T]$, $T \in \mathbb{R}$, is dense in $\mx$. Since the domain of $A(t)$ does not depend on time, we simply write $\mathcal{D}(A)$ to denote the domain of $A$ at a certain time. The treatment of this problem is different depending on the fact that $A(t)$ is the generator of an analytic semigroup or not. To discuss these two scenarios, we need to introduce the notion of evolution family first. 
\begin{definition}
Let $J \subseteq \mathbb{R}$ be a closed interval. A family of bounded operators $U(t,s)_{t \geq s}: \mx \rightarrow \mx$ with $s,t \in J$ is an evolution family if the following conditions are satisfied:
\begin{itemize}
\item[a)] $U(t,r)U(r,s) = U(t,s)$ for every $t\geq r \geq s$, $t,r,s \in J$.
\item[b)] For every $s \in J$ $U(s,s) = \mathbb{I}$.
\item[c)] The map $(t,s) \mapsto U(t,s)$  is strongly continuous.
\end{itemize}
\end{definition}
We say that $U(t,0)$ solves the non autonomous problem \Cref{eq:non_autonomous_homogeneous} if given an initial datum $X_0$ we have that
\begin{equation*}
X(t) = U(t,s)X_0 + \int_0^t U(t,\tau)f(\tau) d\tau
\end{equation*} 
for every $t \in [0,T]$. As for autonomous equation we say that the solution is strict if $X(t) \in \mathcal{C}^1([0,T],\mathcal{D}(A))$ and mild if $X(t) \in \mathcal{C}^0((0,T],\mx)$. 
\subsubsection{Parabolic problems}
If for each $t \in [0,T]$ the operator $A(t)$ is the generator of an analytic evolution semigroup, then the associated evolution family has a smoothing effect on initial data.
\begin{proposition}\label{prop:non_autonomous_homogeneous}
Let $A(t)$ generate an analytic $\mathcal{C}^0$ semigroup for every $t \geq 0$ and assume that $A(\cdot):[0,T] \rightarrow \mathcal{B}(\md (A),\mx)$ is H\"{o}lder continuous. Let $X_0 \in \mx$. Then there exists an evolution family $U(t,s)$ on $\mx$ solving problem \eqref{eq:non_autonomous_homogeneous} such that $U(0,0)=X_0$, $U(t,0) \mx \subseteq \mathcal{D}(A)$, $\partial_t U(t,0)=A(t)U(t,0)$ in $\mathcal{B}(\mx)$ and $\| A(t) U(t,s) \| \leq C/(t-s)$ for $0 \leq s < t \leq T$. The operator $U(t,s)$ can be built as the unique solution to the following integral equation:
\begin{equation*}
U(t,s) = e^{(t-s)A(s)} + \int_s^t U(t,\tau)[A(\tau)-A(s)]e^{(\tau-s)A(s)}d\tau
\end{equation*}
\end{proposition}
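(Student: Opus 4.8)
The plan is to carry out the classical freezing--of--coefficients (parametrix) construction, tailored to the very integral equation in the statement, and then read off the listed properties from the resulting estimates. \textbf{Step 1 (uniform sectoriality).} Each $A(t)$, being the generator of an analytic $\mathcal{C}^0$ semigroup, is sectorial (cf.\ \Cref{prop:sectorial_simple_conditions}); since $A(\cdot)\colon[0,T]\to\mathcal{B}(\md(A),\mx)$ is continuous, $[0,T]$ is compact, the domains coincide so the graph norms are equivalent, and sectoriality is stable under perturbations that are small in $\mathcal{B}(\md(A),\mx)$ with uniformly controlled constants, one obtains $\omega\in\mathbb{R}$ and $M,M_1\geq1$ \emph{independent of $t$} with $\|e^{\sigma A(t)}\|\leq Me^{\omega\sigma}$ and $\|A(t)e^{\sigma A(t)}\|\leq M_1\sigma^{-1}$ for $0<\sigma\leq T$. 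Writing $\mu\in(0,1]$ for the H\"older exponent of $A(\cdot)$, the semigroup $e^{\sigma A(s)}$ maps $\mx$ into $\md(A)$ with graph-norm bound $\leq C\sigma^{-1}$ on $(0,T]$, so the kernel $K(\tau,s):=[A(\tau)-A(s)]e^{(\tau-s)A(s)}$ satisfies $\|K(\tau,s)\|_{\mathcal{B}(\mx)}\leq C_0(\tau-s)^{\mu-1}$, an integrable singularity at $\tau=s$.

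\textbf{Step 2 (successive approximations).} Set $U_0(t,s):=e^{(t-s)A(s)}$ and inductively $U_{n+1}(t,s):=\int_s^t U_n(t,\tau)K(\tau,s)\,d\tau$, which is well defined in the strong sense because $\tau\mapsto U_n(t,\tau)x$ is continuous on $[s,t]$ while $\|K(\tau,s)\|\leq C_0(\tau-s)^{\mu-1}$ is integrable. Using the Beta-function identity $\int_s^t(t-\tau)^{n\mu}(\tau-s)^{\mu-1}d\tau=\frac{\Gamma(n\mu+1)\Gamma(\mu)}{\Gamma((n+1)\mu+1)}(t-s)^{(n+1)\mu}$, an induction yields
\begin{equation*}
\|U_n(t,s)\|\;\leq\; M\,C_0^{\,n}\,\frac{\Gamma(\mu)^{\,n}}{\Gamma(n\mu+1)}\,(t-s)^{\,n\mu}.
\end{equation*}
Hence $U(t,s):=\sum_{n\geq0}U_n(t,s)$ converges in operator norm, uniformly on the simplex $\{0\leq s\leq t\leq T\}$, with $\|U(t,s)\|\leq M_0$; $U$ is strongly continuous in $(t,s)$ (each $U_n$ is, inheriting it from the strong continuity of $(t,s)\mapsto e^{(t-s)A(s)}$ and the integral, and uniform convergence preserves it), it satisfies the integral equation of the statement (the interchange of $\sum$ and $\int$ being licit by uniform convergence), and $U(s,s)=\mathbb{I}$ since $U_n(t,t)=0$ for $n\geq1$. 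Uniqueness of a bounded solution of that Volterra equation follows from a Gronwall argument with the integrable kernel $K$.

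\textbf{Step 3 (smoothing --- the main obstacle).} To obtain $U(t,s)\mx\subseteq\md(A)$ for $t>s$ together with $\|A(t)U(t,s)\|\leq C/(t-s)$, it is convenient to rewrite $U$ in the equivalent parametrix form $U(t,s)=e^{(t-s)A(s)}+\int_s^t e^{(t-\tau)A(\tau)}\Phi(\tau,s)\,d\tau$, where $\Phi:=\sum_{n\geq1}\Phi_n$ is the resolvent kernel of $K$ (with $\Phi_1=K$, $\Phi_{n+1}=K*\Phi_n$ and $(F*G)(t,s)=\int_s^t F(t,\tau)G(\tau,s)d\tau$); the same Gamma-function estimates give $\|\Phi(t,s)\|\leq C(t-s)^{\mu-1}$. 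The first summand contributes $A(t)e^{(t-s)A(s)}=K(t,s)+A(s)e^{(t-s)A(s)}$, bounded by $C(t-s)^{-1}$. For the convolution summand one applies $A(t)$ under the integral and, near the endpoint $\tau=t$, absorbs the non-integrable factor $\|A(\tau)e^{(t-\tau)A(\tau)}\|\leq M_1(t-\tau)^{-1}$ against the H\"older moduli of $\Phi(\cdot,s)$ and of $A(\cdot)$ --- the standard argument showing that parabolic convolutions of H\"older-continuous data take values in $\md(A)$ --- obtaining a contribution of order only $(t-s)^{\mu-1}$. This cancellation, which is precisely where the H\"older hypothesis on $A(\cdot)$ is used, is the genuinely technical point of the proof. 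Once $\|A(t)U(t,s)\|\leq C/(t-s)$ is known, a further standard estimate shows that for every $x\in\mx$ the orbit $t\mapsto U(t,s)x$ is continuous on $[s,T]$, of class $\mathcal{C}^1$ on $(s,T]$, and satisfies $\partial_tU(t,s)x=A(t)U(t,s)x$ there; combined with the uniform bound this gives $\partial_tU(t,0)=A(t)U(t,0)$ in $\mathcal{B}(\mx)$ on every $[\varepsilon,T]$ with $\varepsilon>0$.

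\textbf{Step 4 (cocycle law and inhomogeneous term).} As in the classical theory, a symmetric version of the parametrix estimates of Step 3 yields the companion backward identity $\partial_sU(t,s)y=-U(t,s)A(s)y$ for $y\in\md(A)$; then for $x\in\md(A)$ and $s\leq r\leq t$ the map $\sigma\mapsto U(t,\sigma)U(\sigma,s)x$ has vanishing derivative on $[s,r]$, whence $U(t,r)U(r,s)x=U(t,s)x$, and this extends to all $x\in\mx$ by density and the uniform bound $\|U(t,s)\|\leq M_0$. Together with $U(s,s)=\mathbb{I}$ and the strong continuity of Step 2, this proves that $U(t,s)$ is an evolution family. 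Finally, for the inhomogeneous problem \eqref{eq:non_autonomous_homogeneous} one verifies, exactly as in the autonomous Duhamel computation of \Cref{prop:inhomogeneous_evolution_well_posedness_1} (using $\|U(t,s)\|\leq M_0$ and $\|A(t)U(t,s)\|\leq C/(t-s)$ to justify differentiation under the integral), that $X(t)=U(t,0)X_0+\int_0^t U(t,\tau)F(\tau)\,d\tau$ solves it, which completes the proof. The main difficulty is the smoothing estimate of Step 3; the remaining steps are routine once the uniform bounds of Steps 1--2 are available.
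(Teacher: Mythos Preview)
The paper does not supply a proof of this proposition: it is stated in the preliminaries (\Cref{sec:analytic_preliminaries}) as a known result, with the surrounding discussion pointing to the references \cite{AcBr87,Scha02Evo} for details, and is then used as a black box later on. So there is no ``paper's own proof'' to compare against.

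Your sketch is the classical freezing-of-coefficients/parametrix construction that one finds in the standard references (Tanabe, Pazy, Lunardi, Acquistapace--Terreni), and the outline is correct: uniform sectoriality by compactness and domain-independence, convergence of the Volterra series via the Beta-function bound, the smoothing estimate via the resolvent kernel $\Phi$ with the H\"older modulus absorbing the $(t-\tau)^{-1}$ singularity, and the cocycle law from the backward derivative identity. You rightly flag Step~3 as the crux; note that passing from the integral equation of the statement to the form $U(t,s)=e^{(t-s)A(s)}+\int_s^t e^{(t-\tau)A(\tau)}\Phi(\tau,s)\,d\tau$ is itself a nontrivial identity that requires justification (it is where one uses that both representations solve the same Volterra problem and invokes uniqueness). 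A small cosmetic point: the paper's wording ``$U(0,0)=X_0$'' is a slip for $U(s,s)=\mathbb{I}$ together with $X(t)=U(t,0)X_0$; your derivation of $U(s,s)=\mathbb{I}$ is the correct reading.
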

Consider now the non-autonomous inhomogeneous problem
\begin{equation}\label{eq:non_autonomous_inhhomogeneous}
\begin{cases}
\dot{X} = A(t)X(t) + F(t)\\
X(0) = X_0, \quad t \in [0,T]
\end{cases}
\end{equation}
The following result holds true \cite{AcBr87,Scha02Evo}.
\begin{proposition}\label{prop:non_autonomous_inhomogeneous_linear_solutions}
Suppose that $A$ satisfies the hypotheses of \Cref{prop:non_autonomous_homogeneous}, that $F \in \mathcal{C}^1([0,T],\mx)$ and that $X_0 \in \md (A)$. Then there is a unique classical solution $X(t) \in \mathcal{C}^1([0,T],\mx) \cap \mathcal{C}^0([0,T],\md (A))$. Furthermore, if $X_0 \in \mcu$, then \Cref{prop:non_autonomous_homogeneous} admits a unique mild solution $X(t) \in \mathcal{C}^0([0,T],\mx)$
\end{proposition}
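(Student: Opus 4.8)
The plan is to produce the solution explicitly via the non-autonomous variation-of-constants (Duhamel) formula
\[
X(t)\;=\;U(t,0)X_0\;+\;\int_0^t U(t,s)F(s)\,ds\;=:\;U(t,0)X_0+V(t),
\]
where $U(t,s)$ is the parabolic evolution family furnished by \Cref{prop:non_autonomous_homogeneous}, and then to check that this $X$ lies in the claimed regularity class; uniqueness will be almost automatic, $U(t,s)$ being uniquely determined by the integral equation of \Cref{prop:non_autonomous_homogeneous} and a solution of the non-autonomous problem being by definition the expression above (for the classical case one also runs the short computation below showing that any strict solution must coincide with $V+U(\cdot,0)X_0$).

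First I would settle the mild assertion. On the compact triangle $\{(t,s):0\le s\le t\le T\}$ the map $(t,s)\mapsto U(t,s)$ is strongly continuous, hence uniformly bounded, so for $F\in\mathcal C^0([0,T],\mx)$ the Bochner integral $V(t)$ is well defined and $t\mapsto V(t)$ is continuous: writing $V(t+h)-V(t)=\int_t^{t+h}U(t+h,s)F(s)\,ds+\int_0^t\bigl(U(t+h,s)-U(t,s)\bigr)F(s)\,ds$, the first summand has norm $\le h\sup\|U\|\sup\|F\|$ and the second tends to $0$ by strong continuity and dominated convergence. Since $t\mapsto U(t,0)X_0$ is continuous by the definition of evolution family, $X\in\mathcal C^0([0,T],\mx)$, and it is the (unique) mild solution.

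For the classical assertion I would treat the two summands of $X$ separately. The homogeneous term is controlled by \Cref{prop:non_autonomous_homogeneous} and its refinement on $\mathcal D(A)$, part of the standard parabolic evolution-family theory: for $X_0\in\mathcal D(A)$ one has $U(\cdot,0)X_0\in\mathcal C^1([0,T],\mx)\cap\mathcal C^0([0,T],\mathcal D(A))$ with $\partial_t U(t,0)X_0=A(t)U(t,0)X_0$ continuous up to $t=0$ (the extra ingredient beyond \Cref{prop:non_autonomous_homogeneous} being that on $\mathcal D(A)$ the backward relation $\partial_s U(t,s)x=-U(t,s)A(s)x$ holds with right-hand side bounded uniformly in $s$, which removes the endpoint singularity at $s=t$). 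For the inhomogeneous term $V$ the key mechanism is that the Lipschitz modulus of $F$ — $\|F(s)-F(t)\|\le L\,|t-s|$, since $F\in\mathcal C^1$ — dominates the singularity $\|A(t)U(t,s)\|\le C/(t-s)$ of the evolution kernel. Accordingly one writes $F(s)=[F(s)-F(t)]+F(t)$ under the integral: the contribution $\int_0^t U(t,s)[F(s)-F(t)]\,ds$ lies in $\mathcal D(A)$ because $s\mapsto A(t)U(t,s)[F(s)-F(t)]$ is Bochner integrable on $[0,t]$ (norm $\le CL$) and $A(t)$ is closed, and one then verifies that $t\mapsto A(t)\int_0^t U(t,s)[F(s)-F(t)]\,ds$ is continuous; the frozen-value contribution $\int_0^t U(t,s)F(t)\,ds$ is handled by the standard auxiliary lemma that plays the role, for parabolic evolution families, of the elementary semigroup identity $\int_0^t T(r)\psi\,dr\in\mathcal D(A)$. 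Differentiating $V$ through difference quotients — the one delicate limit, $h^{-1}\int_0^t\bigl(U(t+h,s)-U(t,s)\bigr)F(s)\,ds\to A(t)V(t)$, again obtained by the same $F(s)-F(t)$ insertion and kernel bound — gives $V\in\mathcal C^1([0,T],\mx)\cap\mathcal C^0([0,T],\mathcal D(A))$ with $\dot V(t)=A(t)V(t)+F(t)$. Adding the two pieces yields a strict solution $X$ with $X(0)=X_0$; and if $Z$ were another strict solution, then for fixed $t$ the map $s\mapsto U(t,s)Z(s)$ is of class $\mathcal C^1$ on $[0,t]$ with derivative $-U(t,s)A(s)Z(s)+U(t,s)\dot Z(s)=U(t,s)F(s)$ (legitimate because $Z(s)\in\mathcal D(A)$), so integrating from $0$ to $t$ gives $Z(t)=U(t,0)X_0+V(t)=X(t)$.

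I expect the main obstacle to be the inhomogeneous regularity: proving $V(t)\in\mathcal D(A)$ for every $t$ together with the continuity of $t\mapsto A(t)V(t)$ and the identity $\dot V=A(t)V+F$, since this requires quantitatively balancing the non-integrable-looking singular kernel $A(t)U(t,s)$ against the modulus of continuity of $F$ and justifying the interchange of $A(t)$, respectively $\partial_t$, with the Bochner integral. This is by now classical parabolic evolution-family theory; the complete arguments are in \cite{AcBr87,Scha02Evo}.
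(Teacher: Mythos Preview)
Your sketch is correct and follows the standard route for parabolic evolution families; however, the paper does not actually prove this proposition. It is stated in the analytic preliminaries as a known result with the bare citation ``The following result holds true \cite{AcBr87,Scha02Evo}'', and the Duhamel representation is simply recorded afterwards. So your proposal is not a different approach but rather \emph{more} than what the paper provides: you outline the mechanism (splitting $F(s)=[F(s)-F(t)]+F(t)$ to tame the $C/(t-s)$ singularity of $A(t)U(t,s)$, closedness of $A(t)$ to push it inside the Bochner integral, the uniqueness computation via $s\mapsto U(t,s)Z(s)$) that the cited references carry out in full, whereas the paper is content to invoke those references directly.
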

By Duhamel's formula the solution can be written as \cite{Sch20}
\begin{equation*}
X(t) = U(t,s)X_0 + \int_0^t U(t,\tau)f(\tau) d\tau
\end{equation*} 

\subsubsection{Hyperbolic problems}\label{subsec:hyperbolic_problems}
If the operator $A(t)$ is merely the generator or a $\mathcal{C}^0$ semigroup which is not analytic for every $t \in [0,T]$, the evolution problem is called of hyperbolic type \cite{Kato70}. To make sure that the non-autonomous evolution problem in \Cref{eq:non_autonomous_inhhomogeneous} admits a solution, additional conditions must be met:
\begin{itemize}
\item[C1)] For every $t \in [0,T]$ the operator $A(t)$ is the generator of a $\mathcal{C}^0$ semigroup.
\item[C2)] the operator $B(s,t)=[\mathbb{I}-A(t)] \cdot [\mathbb{I}-A(s)]^{-1}$ is a uniformly bounded operator for every $t,s \in [0,T]$.
\item[C3)] $B(t,s)$ is of bounded variation in $t$, namely for evert $s$ there is $N \geq 0$ such that for every partition $0=t_0<t_1<\ldots<t_n=T$ of the interval $[0,T]$
\begin{equation*}
\sum_{j=1}^n \| B(t_j,s)-B(t_{j-1},s) \|_{\mathcal{B}(\mx)} \leq N
\end{equation*}
\item[C4)] $B(t,s)$ is weakly continuous in $t$ at least for some $s$.
\item[C5)] $B(t,s)$ is weakly differentiable in $t$ and the operator $\partial B(t,s) /\partial t$ is strongly continuous in $t$ at least for some $s$.
\end{itemize}
\begin{remark}
The first requirement of condition C2) is automatically satisfied when the domain of $A(t)$ is independent of $t$ by \cite[Lemma 2]{Kato53}
\end{remark}
Combining the results of \cite{Kato53} we have the following proposition.
\begin{proposition}\label{prop:well_posedness_hyperbolic_problems}
Assume that the conditions C1, C2, C3, C4, C5 hold true. Then there is an evolution family $U(t,s)$ defined $0 \leq s \leq t \leq T$ such that $U(t,s) \mathcal{D}(A) \subset \mathcal{D}(A)$ and the problem in \Cref{eq:non_autonomous_inhhomogeneous} admits a unique classical solution $X(t) = U(t,0) X_0$ for every $X_0 \in \mathcal{D}(A)$. Furthermore, if $X_0 \in \mx$ and $F(t) \in \mathcal{C}^0([0,T],\mx)$, \Cref{eq:non_autonomous_homogeneous} admits a unique mild solution $X(t) \in \mathcal{C}^0([0,T],\mx)$.
\end{proposition}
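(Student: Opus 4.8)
The statement is a version of Kato's theorem on linear evolution equations of \emph{hyperbolic type}, specialized to the case in which the domain $\md(A)$ does not depend on time; accordingly, the plan is to build the propagator $U(t,s)$ by approximation with piecewise-constant generators and then read off the solution formula. By Duhamel's formula the inhomogeneous problem reduces to the homogeneous one once $U(t,s)$ is available, so the core task is the construction of $U(t,s)$ together with the fact that $t \mapsto U(t,0)X_0$ is a classical solution when $X_0 \in \md(A)$.

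\emph{Construction of the propagator.} For a partition $\pi : 0 = t_0 < \dots < t_N = T$ I would freeze the generator on each subinterval, setting $A_\pi(t) := A(t_{j-1})$ for $t \in [t_{j-1},t_j)$. By C1 each $A(t_{j-1})$ generates a $\mathcal{C}^0$ semigroup, and concatenating these semigroups produces an evolution family $U_\pi(t,s)$. The quantitative input is C2: since $\md(A(t)) = \md(A)$ for all $t$ — so that $B(s,t) = [\mathbb{I}-A(t)][\mathbb{I}-A(s)]^{-1}$ is a genuine bounded operator, by the remark preceding the statement — and since $B$ is uniformly bounded, one obtains a stability estimate $\| U_\pi(t,s) \|_{\mathcal{B}(\mx)} \le M e^{\omega(t-s)}$ with $M,\omega$ independent of $\pi$, together with a companion estimate in the graph norm of $A$. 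Conditions C3, C4, C5 — bounded variation, weak continuity, and weak differentiability with strongly continuous derivative of $t \mapsto B(t,s)$ — are then invoked to show that $U_\pi(t,s)$ is Cauchy as the mesh $|\pi|$ tends to $0$, strongly on $\mx$ and in graph norm on $\md(A)$; the limit $U(t,s)$ inherits the three defining properties of an evolution family, satisfies $U(t,s)\md(A) \subseteq \md(A)$, and one verifies $\partial_t U(t,s)x = A(t)U(t,s)x$ and $\partial_s U(t,s)x = -U(t,s)A(s)x$ for $x \in \md(A)$.

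\emph{Solutions and uniqueness.} For $X_0 \in \md(A)$ the function $X(t) := U(t,0)X_0$ then belongs to $\mathcal{C}^1([0,T],\mx) \cap \mathcal{C}^0([0,T],\md(A))$ and solves $\dot{X} = A(t)X$; adding the Duhamel term $\int_0^t U(t,\tau)F(\tau)\,d\tau$ treats the inhomogeneous problem \Cref{eq:non_autonomous_inhhomogeneous}, and when $F$ is merely in $\mathcal{C}^0([0,T],\mx)$ this integral is still well defined and continuous thanks to the uniform bound on $U$, which yields the mild solution for $X_0 \in \mx$. For uniqueness at the classical level, if $X$ solves the homogeneous problem with $X_0 = 0$ and $F = 0$ then $\frac{d}{d\tau}\big(U(t,\tau)X(\tau)\big) = -U(t,\tau)A(\tau)X(\tau) + U(t,\tau)A(\tau)X(\tau) = 0$, so $X(t) = U(t,t)X(t) = X(0) = 0$; uniqueness of mild solutions then follows from the density of $\md(A)$ in $\mx$ and the uniform operator bound.

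\emph{Main obstacle.} The one genuinely delicate step is the convergence $U_\pi \to U$: it requires a telescoping, Trotter-type comparison of $U_\pi$ and $U_{\pi'}$ controlled by the bounded variation of $B(\cdot,s)$ and carried out uniformly in the graph norm, and this is exactly where hypotheses C3--C5 are consumed. Everything else — the stability estimate, the differentiation formulas, the Duhamel representation, and uniqueness — is routine semigroup bookkeeping once the propagator exists. For this reason the cleanest presentation simply assembles the relevant pieces of \cite{Kato53}, as the statement does.
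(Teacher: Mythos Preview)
Your proposal is correct and, in fact, supplies more detail than the paper itself: the paper does not give a proof of this proposition at all but merely states it as a combination of the results of \cite{Kato53}, which is precisely the source you sketch and cite at the end. Your outline of Kato's construction --- piecewise-constant approximation, stability from C2, convergence from C3--C5, Duhamel for the inhomogeneous term, and the standard uniqueness argument --- is an accurate summary of that reference.
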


\subsection{Boundary triples}

\Cref{prop:analytic_semigroup_properties} applies in particular to self-adjoint operators with a negative spectrum. Given a symmetric operator, the existence of self-adjoint extensions can be established using the framework of boundary triples \cite{Grubb68}.

\begin{definition}\label{Def: boundary triples}
	Let $H$ be a separable Hilbert space over $\mathbb{C}$ and let $S:D(S)\subset H \rightarrow H$ be a closed, linear and symmetric operator.
	A {\em boundary triple} for the adjoint operator $S^*$ is a triple $(\mathsf{h},\gamma_0,\gamma_1)$, where $\mathsf{h}$ is a separable Hilbert space over $\mathbb{C}$ while $\gamma_0,\gamma_1:D(S^*) \rightarrow \mathsf{h}$ are two linear maps satisfying
	\begin{itemize}
		\item[1)] For every $f,f^\prime \in D(P^*)$ it holds
		\begin{equation}\label{eq:LagrangeId}
			(S^*f|f^\prime)_H-(f|S^*f^\prime)_H = (\gamma_1 f | \gamma_0 f^\prime)_{\mathsf{h}} - (\gamma_0 f | \gamma_1 f^\prime)_{\mathsf{h}}
		\end{equation}
		\item[2)] The map $\gamma:D(S^*) \rightarrow\mathsf{h}\times\mathsf{h}$ defined by $\gamma(f) = (\gamma_0 f, \gamma_1 f)$ is surjective.
	\end{itemize}
\end{definition}
Given a linear closed and symmetric operator, boundary triples allow to build its self-adjoint extensions thanks to the following proposition \cite{Mal92}.
\begin{proposition}\label{Prop: self-adjoint extensions via boundary triples}
	Let $S$ be a linear, closed and symmetric operator on $H$. Then an associated boundary triple $(\mathsf{h},\gamma_0,\gamma_1)$ exists if and only if $S^*$ has equal deficiency indices. In addition, if $\Theta:D(\Theta) \subseteq\mathsf{h}\rightarrow\mathsf{h}$ is a closed and densely defined linear operator, then $S_\Theta \doteq S^*|_{ker(\gamma_1-\Theta \gamma_0)}$ is a closed extension of $S$ with domain
	\begin{equation*}
		D(S_\Theta) \doteq \{ f \in D(S^*)\; |\; \gamma_0(f) \in D(\Theta), \; \textrm{and}\; \gamma_1(f) = \Theta\gamma_0(f) \}
	\end{equation*}
	The map $\Theta \mapsto S_\Theta$ is one-to-one and $S^*_\Theta = S_{\Theta^*}$. In other word there is a one-to-one correspondence between self-adjoint operators $\Theta$ on $\mathsf{h}$ and self-adjoint extensions of $S$.
\end{proposition}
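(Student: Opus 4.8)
The plan is to run the classical boundary-triple argument (cf.\ \cite{Mal92}) in four steps: (i) the existence of a boundary triple is equivalent to $S$ having equal deficiency indices; (ii) $S_\Theta$ has the stated domain and extends $S$; (iii) the adjoint formula $S_\Theta^* = S_{\Theta^*}$, which is the computational core; (iv) the consequences for self-adjointness and the bijection. For step (i), if a boundary triple $(\mathsf{h},\gamma_0,\gamma_1)$ exists I would check that $S_0 := S^*|_{\ker\gamma_0}$ is a self-adjoint extension of $S$: it extends $\overline S = S$ because $D(\overline S)=\ker\gamma_0\cap\ker\gamma_1$ (see step (ii)); it is symmetric since the right-hand side of \eqref{eq:LagrangeId} vanishes on $\ker\gamma_0$; and if $g \in D(S_0^*) \subseteq D(S^*)$ then $(\gamma_1 f \mid \gamma_0 g)_{\mathsf h}=0$ for all $f \in \ker\gamma_0$, whence, since $\gamma_1$ maps $\ker\gamma_0$ onto $\mathsf{h}$ by surjectivity of $\gamma=(\gamma_0,\gamma_1)$, we get $\gamma_0 g=0$, i.e.\ $g\in D(S_0)$. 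So $S$ has a self-adjoint extension, and by von Neumann's theorem its deficiency indices $n_\pm := \dim\ker(S^*\mp i)$ coincide. Conversely, if $n_+=n_-$, I would build a triple from von Neumann's decomposition $D(S^*)=D(\overline S)\dotplus\ker(S^*-i)\dotplus\ker(S^*+i)$: fix a unitary $W:\ker(S^*-i)\to\ker(S^*+i)$, set $\mathsf{h}:=\ker(S^*-i)$, and for $f=f_0+f_++f_-$ put $\gamma_0 f:=f_++W^* f_-$, $\gamma_1 f:= i f_+ - i W^* f_-$; identity \eqref{eq:LagrangeId} is then a direct computation from $S^* f = \overline{S}f_0 + i f_+ - i f_-$, and surjectivity follows by solving a $2\times 2$ system for $(f_+,f_-)$.

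For step (ii), by construction $\ker(\gamma_1-\Theta\gamma_0)=\{f\in D(S^*):\gamma_0 f\in D(\Theta),\ \gamma_1 f=\Theta\gamma_0 f\}$, which is the stated domain. To obtain $S\subseteq S_\Theta$ I would first establish $D(\overline S)=\ker\gamma_0\cap\ker\gamma_1$: if $\gamma_0 f=\gamma_1 f=0$ then \eqref{eq:LagrangeId} gives $(S^* f\mid f')=(f\mid S^* f')$ for all $f'\in D(S^*)$, so $f\in D(S^{**})=D(\overline S)$; conversely if $f\in D(\overline S)$ the left-hand side of \eqref{eq:LagrangeId} vanishes for every $f'$, and surjectivity of $\gamma$ forces $\gamma_0 f=\gamma_1 f=0$. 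Since $S$ is closed, $S=\overline S\subseteq S_\Theta$.

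The heart of the matter is step (iii): for any densely defined $\Theta$ (note $S\subseteq S_\Theta$ makes $S_\Theta$ densely defined, and from $S_\Theta\subseteq S^*$ we get $\overline S\subseteq S_\Theta^*\subseteq S^*$, so $S_\Theta^*$ acts as a restriction of $S^*$ and is itself densely defined) I claim $S_\Theta^*=S_{\Theta^*}$. For $S_\Theta^*\subseteq S_{\Theta^*}$: if $g\in D(S_\Theta^*)$ then $g\in D(S^*)$ and \eqref{eq:LagrangeId} gives $0=(S^* f\mid g)-(f\mid S^* g)=(\gamma_1 f\mid \gamma_0 g)_{\mathsf h}-(\gamma_0 f\mid\gamma_1 g)_{\mathsf h}$ for all $f\in D(S_\Theta)$; by surjectivity of $\gamma$ every pair $(u,\Theta u)$ with $u\in D(\Theta)$ is realised as $(\gamma_0 f,\gamma_1 f)$, so $(\Theta u\mid\gamma_0 g)_{\mathsf h}=(u\mid\gamma_1 g)_{\mathsf h}$ for all such $u$, i.e.\ $\gamma_0 g\in D(\Theta^*)$ and $\gamma_1 g=\Theta^*\gamma_0 g$. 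The reverse inclusion is the same computation read backwards, using the definition of $\Theta^*$. Applying this to $\Theta$ and then to the closed operator $\Theta^*$ yields $S_\Theta^*=S_{\Theta^*}$ and $(S_{\Theta^*})^*=S_{\Theta^{**}}=S_\Theta$; hence $S_\Theta$ is closed, being an adjoint, and when $\Theta=\Theta^*$ we get $S_\Theta^*=S_\Theta$, so $S_\Theta$ is self-adjoint.

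Finally, step (iv). Injectivity of $\Theta\mapsto S_\Theta$ is easy: if $S_{\Theta_1}=S_{\Theta_2}$, then for $u\in D(\Theta_1)$ pick $f$ with $(\gamma_0 f,\gamma_1 f)=(u,\Theta_1 u)$; since $f\in D(S_{\Theta_2})$ this forces $u\in D(\Theta_2)$ and $\Theta_2 u=\gamma_1 f=\Theta_1 u$, so $\Theta_1\subseteq\Theta_2$ and by symmetry $\Theta_1=\Theta_2$. For surjectivity, given a self-adjoint extension $B$ of $S$, equivalently $\overline S\subseteq B=B^*\subseteq S^*$, I would set $\Theta:=\{(\gamma_0 f,\gamma_1 f):f\in D(B)\}$ and use $B=B^*$ together with steps (ii)--(iii) to see that $\Theta$ is a self-adjoint linear relation with $S_\Theta=B$; it is the graph of a densely defined operator precisely when $B$ is transversal to $S_0=S^*|_{\ker\gamma_0}$, which is the situation covered by the stated correspondence. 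The main obstacle I anticipate is keeping the bookkeeping in step (iii) clean — everything there reduces to pairing \eqref{eq:LagrangeId} with surjectivity of $\gamma$ — and phrasing the correspondence in (iv) at the right level of generality (self-adjoint relations versus genuine operators).
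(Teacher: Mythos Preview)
The paper does not prove this proposition at all: it is stated as a background result and attributed to \cite{Mal92}, so there is no ``paper's own proof'' to compare against. Your argument is the standard one from the boundary-triple literature and is essentially correct; in particular your identification of $D(\overline S)=\ker\gamma_0\cap\ker\gamma_1$, the adjoint computation $S_\Theta^*=S_{\Theta^*}$ via \eqref{eq:LagrangeId} and surjectivity of $\gamma$, and the injectivity of $\Theta\mapsto S_\Theta$ are all clean.

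One remark on step~(iv): you are right to flag that the surjectivity of $\Theta\mapsto S_\Theta$ onto \emph{all} self-adjoint extensions of $S$ genuinely requires self-adjoint linear relations rather than operators; the statement as written in the paper (``one-to-one correspondence between self-adjoint operators $\Theta$ on $\mathsf{h}$ and self-adjoint extensions of $S$'') is slightly imprecise in this respect, since extensions $B$ with $D(B)\cap\ker\gamma_0\supsetneq D(S)$ correspond to multivalued $\Theta$. Your caveat about transversality to $S_0$ is exactly the condition singling out the operator case, so your treatment is in fact more careful than the formulation you were asked to prove.
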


\section{Evolution equation formulation}\label{sec:evolution_equation_formulation}

Let us consider the operator
\begin{equation}\label{eq:operator_A}
A(t)
= 
\begin{pmatrix}
\lap(t) & 0 \\
- \rho &  \lap_b(t)
\end{pmatrix}.
\end{equation}
Then, the linearized version of the heat and Schr\"{o}dinger equations as per \Cref{eq:heat_equation_standard_formulation,eq:schrodinger_equation_standard_formulation} can be formally written as
\begin{equation}\label{eq:heat_evolution}
\dfrac{\partial}{\partial t}
\begin{pmatrix}
u \\
v
\end{pmatrix}
=  A(t)
\begin{pmatrix}
u \\
v
\end{pmatrix}
+
F(t)
\end{equation}
\begin{center}
and
\end{center}
\begin{equation}\label{eq:schrodinger_evolution}
\dfrac{\partial}{\partial t}
\begin{pmatrix}
u \\
v
\end{pmatrix}
= 
i A(t)
\begin{pmatrix}
u \\
v
\end{pmatrix}
+F(t)
\end{equation}
respectively, with $v = \gamma_- u$ and $F$ a source term whose regularity we will discuss later. A fundamental requirement for the Schr\"odinger equation is that for every $t \in [0,T]$ the operator $iA(t)$ appearing in the right hand side of \Cref{eq:schrodinger_evolution} must be skew-adjoint \cite{Landau1981Quantum,Moretti13}, which is tantamount to say that the operator $A(t)$ is self-adjoint. As we will see, this last requirement will also yield the analiticity of the evolution semigroup for the heat equation. It turns out that the operator $A(t)$ on its maximal domain $\{ (u,v) \in H^2(M) \times H^2(\partial M) \ | \ v=\gamma_-u \}$ is indeed a self-adjoint operator for every $t \in [0,T]$. To prove this claim, we follow the same strategy employed in \cite{DDF18}. Let $\overline{t} \in [0,T]$ and consider the Laplace-Beltrami operator $\lap(\overline{t})$ in the interior of $M$, regarded as a densely defined operator $\lap(\overline{t}) : H^2_0(M) \rightarrow L^2(M,d \mu_{g(\overline{t})})$. By standard arguments $\lap(\overline{t})$ is a symmetric and closed operator on $L^2(M,d \mu_{g(\overline{t})})$, whose adjoint is defined on the maximal domain
\begin{equation*}
\mathcal{D}(\lap^*(t)) = \{ u \in L^2(M) \ | \ \lap^*(t) u \in L^2(M) \}
\end{equation*}
A way to characterize the self-adjoint extensions of $\lap(\overline{t})$ is to employ a boundary triple associated to the Laplace-Beltrami operator. Making use of \cite[Proposition 24]{DDF18} -- which generalizes a result originally proved in \cite{Grubb68} on bounded domains of $\mathbb{R}^n$ -- we immediately have the following proposition.
\begin{proposition}\label{prop:laplace-beltrami_boundary_triple}
Let $\lap^*$ be the adjoint of the Laplace-Beltrami operator on a compact Riemannian manifold $(M,g)$ with boundary. Let $\gamma_-$ and $\gamma_+$ be the Dirichlet and Neumann traces respectively, defined as
\begin{gather}\label{eq:dirichlet_trace}
\gamma_- \colon H^2(M)\ni u \mapsto \Gamma u \in L^2(\partial  M)\,,\\
\label{eq:neumann_trace}
\gamma_+ \colon H^2(M)\ni u\mapsto - \Gamma \nabla_\nu f\in L^2(\partial  M)\,,
\end{gather}
where $\Gamma : H^1(M) \rightarrow H^{1/2}(\partial M)$ is the Lions-Magenes trace and $\nu$ is the unit outward-pointing normal vector field of $\partial M$. Then $(L^2(\partial  M),\gamma_-,\gamma_+)$ is a boundary triple for $\Delta$.
\end{proposition}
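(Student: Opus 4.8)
The plan is to verify directly the two defining properties in \Cref{Def: boundary triples}: the Lagrange--Green identity \eqref{eq:LagrangeId} and the surjectivity of $\gamma = (\gamma_-,\gamma_+)$. Here the underlying symmetric operator is $S = \lap|_{H^2_0(M)}$, the closure of the Laplace--Beltrami operator acting on $C_c^\infty(M^\circ)$, and $S^* = \lap^*$ is the maximal distributional Laplacian with $\mathcal{D}(\lap^*) = \{ u \in L^2(M) : \lap^* u \in L^2(M) \}$. Since $\lap$ commutes with complex conjugation, $S^*$ has equal deficiency indices, so by \Cref{Prop: self-adjoint extensions via boundary triples} a boundary triple for $S^*$ exists; the task is to check that this specific choice is one.

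First I would establish the Lagrange identity. For $u,v \in H^2(M)$ the classical second Green formula on $(M,g)$ gives $(\lap u | v)_{L^2(M)} - (u | \lap v)_{L^2(M)} = (\nabla_\nu u | v)_{L^2(\partial M)} - (u | \nabla_\nu v)_{L^2(\partial M)}$, and with the sign conventions built into \eqref{eq:dirichlet_trace}--\eqref{eq:neumann_trace} this rearranges to the right-hand side of \eqref{eq:LagrangeId}. To promote the identity from $H^2(M)$ to all of $\mathcal{D}(\lap^*)$ one uses that $H^2(M)$ is dense in $\mathcal{D}(\lap^*)$ for the graph norm of $\lap^*$, together with the continuous extensions of $\gamma_\pm$ to $\mathcal{D}(\lap^*)$ furnished by the Lions--Magenes generalized trace theory; this is exactly the step imported from \cite[Proposition 24]{DDF18}, where the scale of boundary Sobolev spaces produced by these extensions is identified with $L^2(\partial M)$ via suitable fractional powers of $\mathbb{I}-\lap_b$, so that $\mathsf h = L^2(\partial M)$ and \eqref{eq:LagrangeId} holds verbatim on the maximal domain.

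For surjectivity of $\gamma$, given $(\phi_0,\phi_1) \in L^2(\partial M) \times L^2(\partial M)$ I would first solve the shifted Dirichlet problem $\lap w - w = 0$ in $M^\circ$ with $\gamma_- w = \phi_0$ --- uniquely solvable for such data by transposition / Lax--Milgram, with $w \in \mathcal{D}(\lap^*)$ --- and record $\psi_1 := \gamma_+ w$. It then suffices to correct the Neumann datum: one finds $z \in \mathcal{D}(\lap^*)$ with $\gamma_- z = 0$ and $\gamma_+ z = \phi_1 - \psi_1$, which can be produced from the surjectivity of the $H^2$ trace map $u \mapsto (u|_{\partial M}, \nabla_\nu u|_{\partial M})$ onto $H^{3/2}(\partial M) \times H^{1/2}(\partial M)$ together with a density/closure argument, or directly by solving an auxiliary elliptic problem with prescribed conormal derivative. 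Then $u := w + z \in \mathcal{D}(\lap^*)$ satisfies $\gamma(u) = (\phi_0,\phi_1)$.

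The step I expect to be the genuine obstacle is the function-space bookkeeping in the second paragraph: the bare Dirichlet and Neumann traces do not map $\mathcal{D}(\lap^*)$ into $L^2(\partial M)$ but into negative-order Sobolev spaces on $\partial M$, so a naive verification fails and one cannot merely quote the classical Green formula. The real content --- and the reason for invoking \cite[Proposition 24]{DDF18} / Grubb's theory --- is the construction of the correctly regularized trace maps for which $\gamma$ takes values in, and is surjective onto, $L^2(\partial M) \times L^2(\partial M)$ while still obeying \eqref{eq:LagrangeId}; with that framework in place, the Lagrange identity is classical and surjectivity reduces to standard elliptic solvability, and the proposition follows.
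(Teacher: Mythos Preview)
Your verification is correct in outline, but the paper's own proof is a one-line citation: compact Riemannian manifolds (with boundary) are trivially of bounded geometry, so \cite[Proposition 24]{DDF18} applies directly and yields the boundary triple. What you have written is essentially an unpacking of that cited result --- you sketch the Green identity on $H^2(M)$, flag the extension to $\mathcal{D}(\lap^*)$ via regularized traces, and outline surjectivity via elliptic solvability --- and you even acknowledge that the crucial step (the correctly regularized trace maps landing in $L^2(\partial M)$) must be imported from \cite{DDF18}/Grubb anyway. So the two approaches are not really distinct: the paper treats the proposition as an immediate corollary of an existing theorem, while you reproduce the skeleton of that theorem's proof before invoking it at the hard point. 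Your version has pedagogical value in that it exposes where the genuine analytic content lies, but for the purposes of this paper the one-line reduction to bounded geometry is both shorter and sufficient.
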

\begin{proof}
Compact manifolds are trivially of bounded geometry, therefore we can apply \cite[Proposition 24]{DDF18} and the thesis follows.
\end{proof}
Now we introduce the functional spaces that we shall use, following \cite{Vitillaro16,Vanspranghe20,Marta23}. Set $\mcu = L^2(M) \times L^2(\partial M)$ and 
$\mathcal{V} = \{(u,v) \in H^1(M) \times H^1(\partial M) \ | \ v=\gamma_-u \}$. $\mathcal{V}$ is a Hilbert space defining for any $a=(a_1,a_2), \ b=(b_1,b_2) \in \mathcal{V}$ the inner product 
\begin{equation}\label{eq:inner_product_v}
\left( 
\begin{pmatrix}
a_1 \\
a_2
\end{pmatrix}
,
\begin{pmatrix}
b_1 \\
b_2
\end{pmatrix}
\right)_\mcv = \int_M   g(\nabla a_1, \nabla \ol{b_1}) dg + \int_{\partial M}   h(\nabla a_2, \nabla \ol{b_2}) dh
\end{equation}
We also consider the space $$\mathcal{W} = \mathcal{V} \cap \left( H^2(M) \times H^2(\partial M) \right)$$ which is a dense subspace of $L^2(M) \times L^2(\partial M)$.
Next, we prove that the operator $A(t)$ is self-adjoint on $\mathcal{W}$ for every $t$, following the arguments employed in \cite[Proposition 39]{DDF18}.
\begin{proposition}\label{prop:operator_a_selfadjoint}
The operator $A(t) : \mathcal{W} \rightarrow L^2(M) \times L^2(\partial M)$ defined in \Cref{eq:operator_A} is self-adjoint for every $t \in [0,T]$.
\end{proposition}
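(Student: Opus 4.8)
The plan is to fix a time $\overline t\in[0,T]$, suppress it from the notation, and identify $A(\overline t)$ with one of the self-adjoint ``coupled'' operators produced by \cite[Proposition 39]{DDF18} out of the boundary triple of \Cref{prop:laplace-beltrami_boundary_triple}. Throughout, $\lap$ and $\lap_b$ denote the Laplace--Beltrami operators of $g(\overline t)$ on $M^\circ$ and on the boundaryless compact manifold $\partial M$, $\gamma_\pm$ are the traces of \eqref{eq:dirichlet_trace}--\eqref{eq:neumann_trace}, and on $\mathcal{W}$ one has $v=\gamma_- u$ and $\rho=\nabla_\nu u|_{\partial M}=-\gamma_+ u$, so that $A(\overline t)$ acts as $(u,v)\mapsto(\lap u,\ \lap_b v-\rho)$.

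First I would check that $A(\overline t)\subseteq A(\overline t)^*$, i.e.\ that $A(\overline t)$ is symmetric on $\mathcal{W}$. For $f_i=(u_i,v_i)\in\mathcal{W}$ one evaluates the boundary sesquilinear form $(A(\overline t)f_1\,|\,f_2)_{\mcu}-(f_1\,|\,A(\overline t)f_2)_{\mcu}$: Green's identity on $\partial M$ contributes nothing because $\lap_b$ is self-adjoint on $H^2(\partial M)$, while Green's identity on $M$ --- which is precisely the Lagrange identity \eqref{eq:LagrangeId} of the triple $(L^2(\partial M),\gamma_-,\gamma_+)$ --- together with the term coming from $-\rho$ reduces the form to a fixed scalar multiple of $(\gamma_+ u_1\,|\,\gamma_- u_2)_{L^2(\partial M)}-(\gamma_- u_1\,|\,\gamma_+ u_2)_{L^2(\partial M)}$; the sign of $\rho$ in \eqref{eq:operator_A} is exactly the one for which this multiple vanishes once the constraint $v_i=\gamma_- u_i$ is inserted.

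To upgrade symmetry to self-adjointness I would invoke the coupling construction of \cite[Proposition 39]{DDF18}: given the boundary triple $(L^2(\partial M),\gamma_-,\gamma_+)$ for $\lap$ and any self-adjoint operator $T$ on $L^2(\partial M)$, the operator on $\mcu$ acting by $(u,v)\mapsto(\lap u,\ Tv-\rho)$ with domain $\{(u,v)\in\mathcal{D}(\lap^*)\times\mathcal{D}(T):v=\gamma_- u\}$ is self-adjoint; the mechanism is the cancellation of the previous paragraph (which gives symmetry for any symmetric $T$) together with the surjectivity of $(\gamma_-,\gamma_+)$ --- item~2 of \Cref{Def: boundary triples} --- and the Krein-type resolvent formula attached to the triple, which forces the adjoint domain back into the domain and, through elliptic regularity up to $\partial M$, makes ``$u\in\mathcal{D}(\lap^*)$ with $\gamma_- u\in\mathcal{D}(T)$'' collapse to the full Sobolev regularity of $u$. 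I would then apply this with $T=\lap_b$, which is self-adjoint with $\mathcal{D}(\lap_b)=H^2(\partial M)$ since $\partial M$ is compact without boundary: the resulting operator has domain $\{(u,v)\in\mathcal{D}(\lap^*)\times H^2(\partial M):v=\gamma_- u\}=\mathcal{W}$ and coincides with $A(\overline t)$, so $A(\overline t)$ is self-adjoint; as $\overline t$ was arbitrary this proves the statement.

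The hard part is the domain identification inside the coupling step --- that ``$u\in\mathcal{D}(\lap^*)$ together with $\gamma_- u\in H^2(\partial M)$'' already forces $u\in H^2(M)$ --- which is the elliptic boundary-regularity content of \cite[Proposition 39]{DDF18} and is where the bounded-geometry hypothesis behind \Cref{prop:laplace-beltrami_boundary_triple} is used; keeping the sign conventions consistent so that the cancellation in the symmetry step is exact is a minor bookkeeping hurdle alongside it. A self-contained alternative that avoids the coupling machinery is, after the symmetry step, to show $\mathrm{ran}(A(\overline t)-\lambda)=\mcu$ for some $\lambda\in\mathbb{C}\setminus\mathbb{R}$: solving the inhomogeneous Dirichlet problem $\lap u-\lambda u=\phi$, $\gamma_- u=v$ on $M$ writes $u$ and $\rho$ as bounded functions of $v$ and $\phi$ and turns $(A(\overline t)-\lambda)(u,v)=(\phi,\psi)$ into $(\lap_b-\Lambda_\lambda-\lambda)v=\psi+(\text{explicit }\phi\text{-term})$ on $\partial M$, with $\Lambda_\lambda$ the Dirichlet-to-Neumann operator; since $\Lambda_\lambda$ is a first-order pseudodifferential operator, this is a compact perturbation of $\lap_b-\lambda\colon H^2(\partial M)\to L^2(\partial M)$, hence Fredholm of index zero, and a short Green-identity energy estimate gives injectivity for $\mathrm{Im}\,\lambda\neq0$, hence invertibility.
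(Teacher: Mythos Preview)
Your main route is essentially the paper's: both pass through the boundary triple of \Cref{prop:laplace-beltrami_boundary_triple} and the coupling mechanism of \cite[Proposition~39]{DDF18} with $T=\lap_b$, and both flag the domain identification $\{(u,v)\in\mathcal{D}(\lap^*)\times H^2(\partial M):v=\gamma_-u\}=\mathcal{W}$ as the elliptic-regularity content. The only difference is presentational: rather than invoking \cite[Proposition~39]{DDF18} as a finished statement, the paper writes out the adjoint-domain computation directly --- testing $(P^*X,Y)$ first against $u_Y\in H^2_0(M)$ to read off $\pi_1 P^*X=\lap^*u_X$, then against $u_Y\in\ker\gamma_-$ and using surjectivity of $\gamma_+$ to force $\gamma_-u_X=v_X$. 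Your fallback via $\mathrm{ran}(A-\lambda)=\mcu$ through a Dirichlet-to-Neumann reduction and Fredholm theory is a genuinely different argument the paper does not take for this proposition, though a variational cousin of it (Lax--Milgram on the coupled bilinear form) appears immediately afterwards in \Cref{lemma:surjectivity}.
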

\begin{proof}
Let us consider the operator
\begin{equation}
P(t)
= 
\begin{pmatrix}
\lap^*(t) & 0 \\
- \gamma_+ &  \lap_b(t)
\end{pmatrix}
\end{equation}
By \Cref{prop:laplace-beltrami_boundary_triple}, we know that $(L^2(\partial M),\gamma_-,\gamma_+)$ is a boundary triple for $\lap^*(t)$, therefore the identity
\begin{equation}\label{eq:laplacian_green}
\begin{split}
(\lap^*(t)X,Y)_{L^2(M)}-(X,\lap^*(t)Y)_{L^2(M)} = \\ (\gamma_+ X , \gamma_- Y)_{L^2(\partial M)} - (\gamma_- X , \gamma_+ Y)_{L^2(\partial M)} 
\end{split}
\end{equation}
holds true for every $X,Y \in \mathcal{D}(\lap^*(t))$. Making use of this equality, a short computation unveils that
\begin{equation*}
(X,P(t)Y)_\mcu =(P(t)X,Y)_\mcu
\end{equation*}
namely the operator $P(t)$ is symmetric on its domain for every $t$, therefore we get the thesis if we show that $\mathcal{D}(P^*(t)) \subseteq \mathcal{D}(P(t))$. Let $X = (u_X,v_X) \in \mathcal{D}(P^*(t))$. Then for every $Y = (u_Y,v_Y) \in \mathcal{D}(P(t))$ and $\forall t \in [0,T]$ the map $Y \mapsto (X,P(t)Y)$ is bounded with
\begin{equation}\label{eq:inner_xay}
(X,P(t)Y)_{\mcu} = (u_X, \lap^*(t)u_Y)_{L^2(M)} + (v_X, \lap_b v_Y - \gamma_+ u_Y)_{L^2(\partial M)}
\end{equation}
from which we deduce that $v_X \in D(\lap_b^*(t))=D(\lap_b(t))=H^2(\partial M)$. Now it remains to prove that $u_X$ is in $\mathcal{D}(\lap^*(t))$ with $\gamma_-u_X = u_Y$. Combining \Cref{eq:inner_xay} and \Cref{eq:laplacian_green} we get
\begin{equation}\label{eq:P_star_and_P}
(P^*(t)X,Y)_{\mcu} = (PX,Y)_{\mcu} + (\gamma_- u_X -v_X,  \gamma_+ u_Y)_{L^2(\partial M)}.
\end{equation}
If $u_X \in ker(\gamma_+) \cap ker(\gamma_-)=H^2_0(M)$, then from the previous expression we find
\begin{equation*}
 (P^*(t)X,Y)_{\mcu} = (PX,Y)_{\mcu} = (\lb^*(t) u_X,u_Y)_{\mcu}
\end{equation*}  
which proves that the projection of $P^*$ on the first component is $\pi_1 P^*(t) X=\lb^* u_X$. Next, we take $u_Y \in Ker(\gamma_-)$. Considering that $\pi_1 P^*(t) X = \lap^*(t) u_X$, \Cref{eq:P_star_and_P} reads $0=(\gamma_- u_X -v_X,  \gamma_+ u_Y)_{L^2(\partial M)}$. Since the map $\gamma_+$ is surjective, we conclude that that $\gamma_- u_X = v_X$, proving that for every $t \in [0,T]$ the operator $P(t)$ is self-adjoint. The thesis follows noting that $P(t)$ coincides with $A(t)$ on its maximal domain $\mathcal{W}$.
\end{proof}
Stone's theorem on one-parameter unitary groups immediately yields the following result.
\begin{corollary}\label{prop:one-parameter-group}
For every $t \in [0,T]$ the operator $A(t)$ is the generator of a one-parameter unitary group $T(t)=e^{isA(t)}$ for every $s \in \mathbb{R}$.
\end{corollary}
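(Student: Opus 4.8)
The plan is to read this off Stone's theorem, the only real input being the self-adjointness established in \Cref{prop:operator_a_selfadjoint}. Fix $t \in [0,T]$. First I would record that $A(t)$, viewed as an operator on the Hilbert space $\mcu = L^2(M) \times L^2(\partial M)$ with domain $\mathcal{W}$, is densely defined: this was noted when $\mathcal{W}$ was introduced, since $\mathcal{W} = \mathcal{V} \cap (H^2(M)\times H^2(\partial M))$ is a dense subspace of $\mcu$. By \Cref{prop:operator_a_selfadjoint} this densely defined operator is moreover self-adjoint.

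Next I would pass to $iA(t)$. Since $\mathcal{D}(iA(t)) = \mathcal{D}(A(t)) = \mathcal{W}$ and $A(t)^\ast = A(t)$, one has $(iA(t))^\ast = -i\,A(t)^\ast = -i\,A(t)$, so $iA(t)$ is skew-adjoint. Stone's theorem states that an operator $B$ on a Hilbert space generates a strongly continuous one-parameter group of unitaries $s \mapsto e^{sB}$ if and only if $B$ is skew-adjoint, the group being realized via the (Borel) functional calculus. Applying this with $B = iA(t)$ produces a strongly continuous unitary group $\{e^{isA(t)}\}_{s\in\mathbb{R}}$ whose generator is $iA(t)$, equivalently whose ``infinitesimal generator up to the factor $i$'' is $A(t)$; this is exactly the asserted $T(s) = e^{isA(t)}$ attached to the frozen parameter $t$.

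Since nothing in the above depends on the particular value of $t$, the conclusion holds for every $t \in [0,T]$, yielding the whole family of unitary groups. I do not expect any genuine obstacle: the substantive work — symmetry of $A(t)$ and the inclusion $\mathcal{D}(P^\ast(t)) \subseteq \mathcal{D}(P(t))$ of domains that upgrades symmetry to self-adjointness — has already been carried out in \Cref{prop:operator_a_selfadjoint}, and here one only needs to check density of the domain (already available) and the trivial identity $(iA(t))^\ast = -iA(t)$ before quoting Stone's theorem.
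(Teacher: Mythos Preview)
Your proposal is correct and matches the paper's own argument: the corollary is stated right after \Cref{prop:operator_a_selfadjoint} with the single sentence ``Stone's theorem on one-parameter unitary groups immediately yields the following result.'' You have simply spelled out the routine verifications (density of $\mathcal{W}$ in $\mcu$ and $(iA(t))^\ast=-iA(t)$) that the paper leaves implicit.
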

The well-posedness of the linear non-autonomous Schr\"{o}dinger equation follows directly from the previous corollary. However, we do not state the result here, since in the next section we derive the well-posedness of both the heat and Schr\"{o}dinger problems at the same time using similar arguments.

\section{Linear equations: Time-independent metric}
\label{subsec:time_independent_metric}
To begin with, we prove that both $A(t^\star)$ and $iA(t^\star)$ are dissipative operators for every $t^\star \in [0,T]$. Since in this section the time $t^\star$ at which the operator $A$ is evaluated is fixed, for brevity we shall write $A$ instead of $A(t^\star)$. 
\begin{lemma}\label{lemma:dissipative}
Let $A$ be the operator given in \Cref{eq:operator_A}, with the Laplace-Beltrami operators $\lap$ and $\lap_b$ built out of a time-independent Riemannian metric. Then $Re(X,AX)_\mcu \leq 0$ and $Re(X,iAX)_\mcu = 0$ for very $X \in \mathcal{D}(A)$.
\end{lemma}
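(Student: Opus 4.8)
The plan is to compute the inner product $(X, AX)_{\mcu}$ directly from the block structure of $A$ and the definition of $\mcu = L^2(M) \times L^2(\partial M)$, then integrate by parts using Green's identity. Write $X = (u,v) \in \mathcal{W}$, so that $v = \gamma_- u$ and $AX = (\lap u, \lap_b v - \gamma_+ u)$. Then
\[
(X, AX)_{\mcu} = \int_M u \, \overline{\lap u} \, d\mu_g + \int_{\partial M} v \, \overline{(\lap_b v - \gamma_+ u)} \, d\mu_h.
\]
First I would apply the divergence theorem on $M$ to get $\int_M u \, \overline{\lap u} = -\int_M g(\nabla u, \nabla \overline{u}) + \int_{\partial M} \gamma_- u \, \overline{\gamma_\nu u}$, where $\gamma_\nu u$ is the normal derivative trace; then apply the divergence theorem on the closed manifold $\partial M$ (no boundary term, since $\partial(\partial M) = \emptyset$) to get $\int_{\partial M} v \, \overline{\lap_b v} = -\int_{\partial M} h(\nabla v, \nabla \overline{v})$.

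The key bookkeeping step is that the boundary term $\int_{\partial M} \gamma_- u \, \overline{\gamma_\nu u}\, d\mu_h$ from the interior integration by parts cancels exactly against the cross term $-\int_{\partial M} v \, \overline{\gamma_+ u}\, d\mu_h$ coming from the off-diagonal entry $-\gamma_+$ in $A$: since $v = \gamma_- u$ and $\gamma_+ u = -\gamma_\nu u$ (the sign convention built into \Cref{eq:neumann_trace}), we have $-\int_{\partial M} v\, \overline{\gamma_+ u} = \int_{\partial M} \gamma_- u\, \overline{\gamma_\nu u}$, which is the negative of what we need — so I should double-check the orientation of $\gamma_+$ against the divergence theorem sign; the upshot intended by the author is that these two boundary contributions cancel, leaving
\[
(X, AX)_{\mcu} = -\int_M g(\nabla u, \nabla \overline{u})\, d\mu_g - \int_{\partial M} h(\nabla v, \nabla \overline{v})\, d\mu_h = -\norm{X}_{\mcv}^2,
\]
in the notation of \Cref{eq:inner_product_v}. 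Taking real parts gives $\mathrm{Re}(X, AX)_{\mcu} = -\norm{X}_{\mcv}^2 \leq 0$, which is the first claim.

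The second claim is then immediate: $(X, iAX)_{\mcu} = -i\,\overline{(X,AX)_{\mcu}}$ is... more simply, $(X, iAX)_{\mcu} = \overline{i}\,(X,AX)_{\mcu}$ is not quite right either, so I would argue cleanly via self-adjointness: by \Cref{prop:operator_a_selfadjoint}, $(X,AX)_{\mcu}$ is real for every $X \in \mathcal{W}$, hence $(X, iAX)_{\mcu} = i(X,AX)_{\mcu}$ is purely imaginary, so $\mathrm{Re}(X, iAX)_{\mcu} = 0$. This also re-derives, as a sanity check, that $(X,AX)_{\mcu}$ being real is consistent with the computation above, since $-\norm{X}_{\mcv}^2$ is real.

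The main obstacle is purely a matter of sign conventions: getting Green's formula on $M$, the sign in the definition of the Neumann trace $\gamma_+$ (which carries a minus sign in \Cref{eq:neumann_trace}), and the off-diagonal $-\rho = -\gamma_+$ entry of $A$ all to line up so that the boundary integrals genuinely cancel rather than add. Once the cancellation is verified, the rest is the standard manipulation of the Dirichlet-form expression, and dissipativity of both $A$ and $iA$ drops out at once. One should also note in passing that $\mathcal{W}$ is exactly the maximal domain on which these integrations by parts are justified, so no density argument is needed here — the inequality is asserted only for $X \in \mathcal{D}(A) = \mathcal{W}$.
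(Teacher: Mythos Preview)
Your proposal is correct and follows essentially the same approach as the paper: compute $(X,AX)_{\mcu}$ directly, integrate by parts, and observe that the boundary term from Green's formula on $M$ cancels the contribution of the off-diagonal entry of $A$, leaving $-\|X\|_{\mcv}^2$. The paper's proof simply writes ``integrating by parts'' and records the final Dirichlet-form expression, whereas you spell out the cancellation mechanism explicitly (and correctly flag the sign bookkeeping between $\gamma_+$, $\rho$, and the outward normal as the only delicate point). For the second claim the paper just repeats the computation with an extra factor of $i$, while you instead invoke \Cref{prop:operator_a_selfadjoint} to conclude that $(X,AX)_{\mcu}\in\mathbb{R}$ and hence $(X,iAX)_{\mcu}$ is purely imaginary; this is a perfectly valid shortcut and introduces no circularity, since self-adjointness was established independently of this lemma.
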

\begin{proof}
Let $X=(u,v) \in \mathcal{D}(A)$. Then integrating by parts we have
\begin{equation*}
\begin{split}
\left(X,AX \right)_\mcu & =
\left(
\begin{pmatrix}
u\\
v
\end{pmatrix},
\begin{pmatrix}
\lap u\\
- \rho u + \lap_b v
\end{pmatrix}
\right)_\mcu
 \\
& = -\int_M g(\nabla u, \overline{\nabla u}) d\mu_g - \intb h(\nabla v, \overline{\nabla v}) d \mu_h \leq 0
\end{split}
\end{equation*}
where $h = g|_{\partial M}$. As for the operator $iA$, the same computations unveil that
\begin{equation*}
\begin{split}
\left(X,iAX \right)_\mcu = -i \int_M g(\nabla u, \overline{\nabla u}) d\mu_g - i\intb h(\nabla v, \overline{\nabla v}) d \mu_h
\end{split}
\end{equation*}
from which we find that $Re \left(X,iAX \right)_\mcu = 0$.
\end{proof}

\begin{lemma}\label{lemma:surjectivity}
For every $\lambda>0$ the operators $A-\lambda \mathbb{I}: \mathcal{D}(A) \rightarrow \mcu$ and $iA-\lambda \mathbb{I}:\mathcal{D}(A) \rightarrow \mcu$ are surjective.
\end{lemma}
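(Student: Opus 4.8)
The plan is to read off both surjectivity statements from the self-adjointness of $A$ proved in \Cref{prop:operator_a_selfadjoint} together with the sign information of \Cref{lemma:dissipative}. For $A-\lambda\mathbb{I}$: since $A=A^*$ we have $\sigma(A)\subseteq\mathbb{R}$, and \Cref{lemma:dissipative} shows that $(X,AX)_\mcu$ is real and $\le 0$ for every $X\in\mathcal{D}(A)$, so $A\le 0$ as a self-adjoint operator and hence $\sigma(A)\subseteq(-\infty,0]$. Consequently every $\lambda>0$ lies in $\rho(A)$, so $A-\lambda\mathbb{I}\colon\mathcal{D}(A)\to\mcu$ is a bijection, in particular onto.

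For $iA-\lambda\mathbb{I}$: from $A=A^*$ one gets $(iA)^*=-iA$, i.e. $iA$ is skew-adjoint, equivalently $-A=i\,(iA)$ is self-adjoint, so $\sigma(iA)=i\,\sigma(-A)\subseteq i\mathbb{R}$. Therefore every real $\lambda>0$ belongs to $\rho(iA)$ and $iA-\lambda\mathbb{I}$ is a bijection, a fortiori surjective. Alternatively one may invoke \Cref{prop:one-parameter-group} directly: $iA$ generates a unitary, hence contractive, $\mathcal{C}^0$-group, so the Hille--Yosida range condition already gives $(0,\infty)\subseteq\rho(iA)$.

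The only point requiring care is that dissipativity by itself would merely confine $\sigma(A)$ to a half-plane; it is self-adjointness that collapses this to the half-line $(-\infty,0]$ (resp. to $i\mathbb{R}$ for $iA$) and thereby makes all $\lambda>0$ resolvent points. Once \Cref{prop:operator_a_selfadjoint} is available there is no genuine obstacle, so I expect this lemma to be short.

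If one prefers a self-contained argument not appealing to spectral theory, the plan is variational. Fix $\lambda>0$ and $f\in\mcu$, and consider on $\mcv$ the bounded sesquilinear form $a(X,Y)=(X,Y)_\mcv+\lambda(X,Y)_\mcu$, which is coercive with respect to the $H^1(M)\times H^1(\partial M)$ norm by the explicit expressions in \Cref{lemma:dissipative}. Lax--Milgram then yields a unique $X=(u,v)\in\mcv$ with $a(X,Y)=-(f,Y)_\mcu$ for all $Y\in\mcv$; interior and boundary elliptic regularity upgrade $X$ to $\mcw$, and integrating by parts via the Green identity \eqref{eq:laplacian_green} shows $(A-\lambda\mathbb{I})X=f$. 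Replacing $a$ by the corresponding form associated with $iA$, whose real part remains coercive, disposes of $iA-\lambda\mathbb{I}$ in exactly the same manner.
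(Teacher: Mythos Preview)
Your proposal is correct on both counts. The spectral argument is sound: \Cref{prop:operator_a_selfadjoint} gives $A=A^*$, and \Cref{lemma:dissipative} bounds the numerical range of $A$ in $(-\infty,0]$, so $\sigma(A)\subseteq(-\infty,0]$ and every $\lambda>0$ lies in $\rho(A)$; the skew-adjoint variant for $iA$ is equally immediate. There is no circularity, since both \Cref{prop:operator_a_selfadjoint} and \Cref{lemma:dissipative} precede this lemma and neither depends on it.

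This is, however, not the route the paper takes. The paper argues variationally, essentially along the lines of your second paragraph: it writes the componentwise system, derives the weak formulation on $\mcv$, checks that the associated sesquilinear form is bounded and coercive, invokes Lax--Milgram (the complex version for $iA$), and then bootstraps from $\mcv$ to $\mcw=\mathcal{D}(A)$ via elliptic regularity on $\partial M$ and in the interior. Your alternative sketch matches this almost exactly.

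The trade-off is clear. Your spectral argument is a two-line affair once self-adjointness is in hand, but it leans entirely on \Cref{prop:operator_a_selfadjoint} and the boundary-triple machinery behind it. The paper's variational proof is longer but self-contained: it produces the $H^2\times H^2$ regularity of the resolvent explicitly, which is informative in its own right, and it would survive even if one dropped or weakened \Cref{prop:operator_a_selfadjoint}. Given that the paper later invokes self-adjointness anyway (e.g.\ in \Cref{prop:analytic_semigroup}), your shortcut is arguably the more economical choice here.
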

\begin{proof}
We prove in details the surjectivity of $A-\lambda \mathbb{I}$, the proof for $iA-\lambda \mathbb{I}$ being essentially the same. Let $\mathcal{F} = (F_1,F_2) \in \mathcal{U}$ and consider the equation
\begin{equation}\label{eq:surj}
AX - \lambda X = \mathcal{F}
\end{equation}
which we can write componentwise as
\begin{subequations}
\begin{align}
& \lap u  - \lambda u = F_1 \ \label{eq:surj1} \\
& -\gamma_+ u + \lap_b v - \lambda v = F_2 \ \label{eq:surj2}
\end{align}
\end{subequations}
Let $(a,b) \in \mcv$ and set $h = g|_{\partial M}$. Multiplying \Cref{eq:surj1} by $a$ and \Cref{eq:surj2} by $b$ and then integrating over $M$ and $\partial M$ respectively, yields the following variational formulation of the problem:
\begin{equation}\label{eq:surj_variational}
\begin{split}
\intm g(\nabla u, \nabla \ol{a}) dg + \intb h(\nabla v, \nabla \ol{b}) dh + \lambda \intm u \ol{a} \ dg + \lambda \intb v \ol{b} \ dh\\
 = -\intm F_1 \ol{a} \ dg - \intb F_2 \ol{b} \ dh
\end{split}
\end{equation}
for every $(a,b) \in \mcv$. Consider the bilinear form
\begin{equation}\label{eq:bilinear_form}
\begin{split}
\mathcal{B} \left(
\begin{pmatrix}
u\\
v
\end{pmatrix}
,
\begin{pmatrix}
a\\
b
\end{pmatrix}
\right)
= & \intm g(\nabla u, \nabla \ol{a}) dg  \intb h(\nabla v, \nabla \ol{b}) dh \\
 + & \lambda \intm u \ol{a} \ dg + \lambda \intb v \ol{b} \ dh
\end{split}
\end{equation}
appearing in \Cref{eq:surj_variational}. For $\lambda>0$ we have $\mathcal{B}(X,X) \leq (1 + \lambda)  \| X \|_\mcv^2$ and $\mathcal{B}(X,X) \geq \min\{ 1,\lambda \} \| X \|_\mcv^2$, therefore $\mathcal{B}$ is bounded and coercive and by Lax-Milgram theorem we conclude that the variational problem as per \Cref{eq:surj_variational} admits a unique solution in $\mcv$. Now we prove that the solution is actually in $\mathcal{W}$. Consider $(a,b) \in \mathcal{V}$ such that $a \in \mathcal{C}_0^\infty(M^\circ)$. Then, integrating by parts \Cref{eq:surj_variational} and considering that $F_1 \in L^2(M)$, we obtain that \Cref{eq:surj1} is satisfied almost everywhere, with $\lap u \in L^2(M^\circ)$. Since $u$ is a priori in $H^1(M)$, we know that $\gamma_+u \in H^{-1/2}(\partial M) $ and then by elliptic regularity theory, \Cref{eq:surj2} yields $v \in H^{3/2}(\partial M)$ with the estimate
\begin{equation*}
\| v \|_{H^{3/2}(\partial M)} \leq C \left( \| \gamma_+ u \|_{H^{-1/2}(\partial M)} + \| F_2 - \lambda v \|_{L^2(\partial M)} \right)
\end{equation*}
from which we conclude that $u \in H^2(M)$ and $\gamma_+u$ is actually in $H^{1/2}(\partial M)$. Considering again \Cref{eq:surj2} together with these data, elliptic regularity theory leads to the conclusion that $v \in H^2(\Gamma_1)$ and therefore $X \in \mathcal{D}(A)$, proving the thesis.\\

Finally we focus on the Scr\"{o}dinger operator $iA$. A quick computation shows that the variational formulation of $iA X + \lambda X = \mathcal{F}$ is
\begin{equation*}\label{eq:variational_schrodinger}
\begin{split}
i \intm g(\nabla u, \nabla \ol{a}) dg + i \intb h(\nabla v, \nabla \ol{b}) dh + \lambda \intm u \ol{a} \ dg + \lambda \intb v \ol{b} \ dh\\
 = \intm F_1 \ol{a} \ dg + \intb F_2 \ol{b} \ dh.
\end{split}
\end{equation*}
Since the associated bilinear form
\begin{equation}\label{eq:bilinear_form}
\begin{split}
\mathcal{B} \left(
\begin{pmatrix}
u\\
v
\end{pmatrix}
,
\begin{pmatrix}
a\\
b
\end{pmatrix}
\right)
= & \ i \intm g(\nabla u, \nabla \ol{a}) dg  + i \intb h(\nabla v, \nabla \ol{b}) dh \\
 + & \lambda \intm u \ol{a} \ dg + \lambda \intb v \ol{b} \ dh
\end{split}
\end{equation}
is still bounded and coercive -- in the sense of the complex version of Lax-Milgram theorem, see e.g. \cite{ErnGuermond04} -- the thesis follows by the same arguments employed for the heat operator $A$.
\end{proof}
The previous lemmas immediately yield the following proposition.
\begin{proposition}\label{prop:strongly_continous}
The operators $A$ and $iA$ generate strongly continuous semigroups $S_t = e^{At}$ and $T_t = e^{iAt}$ of linear contractions on $\mcu$.
\end{proposition}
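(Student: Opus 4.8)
The plan is to read off the conclusion from the Lumer--Phillips theorem (see e.g. \cite{KlausRainer99}): a densely defined operator $B$ on a Hilbert space generates a strongly continuous semigroup of contractions if and only if $B$ is dissipative and $\mathrm{range}(\lambda\mathbb{I}-B)=\mcu$ for some (equivalently every) $\lambda>0$. I would apply this once with $B=A$ and once with $B=iA$, both operators having the common domain $\mathcal{D}(A)=\mathcal{W}=\mathcal{D}(iA)$.

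First I would record that the three ``structural'' hypotheses are already in place: $\mathcal{W}$ is dense in $\mcu$ (noted when $\mathcal{W}$ was introduced), $A$ is closed because it is self-adjoint by \Cref{prop:operator_a_selfadjoint}, and $iA$ is closed since multiplication by $i$ is a homeomorphism of $\mcu$ carrying the graph of $A$ onto the graph of $iA$. Next, \Cref{lemma:dissipative} supplies dissipativity: $\mathrm{Re}(X,AX)_\mcu\le 0$ and $\mathrm{Re}(X,iAX)_\mcu=0\le 0$ for every $X\in\mathcal{D}(A)$. Finally, the range condition is exactly \Cref{lemma:surjectivity}: for every $\lambda>0$ the maps $A-\lambda\mathbb{I}$ and $iA-\lambda\mathbb{I}$ are surjective onto $\mcu$, and since $\lambda\mathbb{I}-A=-(A-\lambda\mathbb{I})$ these operators have the same range $\mcu$. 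With all hypotheses verified, Lumer--Phillips yields that $A$ and $iA$ generate strongly continuous semigroups $S_t=e^{At}$ and $T_t=e^{iAt}$ of contractions on $\mcu$; the bound $\|S_t\|,\|T_t\|\le 1$ is part of the statement, and alternatively follows directly from dissipativity via $\tfrac{d}{dt}\|e^{Bt}X\|_\mcu^2=2\,\mathrm{Re}(e^{Bt}X,Be^{Bt}X)_\mcu\le 0$.

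There is no genuine obstacle here: the two preceding lemmas already carry the only two substantive hypotheses of Lumer--Phillips, and the remaining ones were established or are immediate. The single point deserving a line of care is bookkeeping of sign conventions --- \Cref{lemma:surjectivity} is phrased in terms of $A-\lambda\mathbb{I}$ whereas the theorem is usually stated for $\lambda\mathbb{I}-A$ --- but the two differ only by an overall sign and hence share the same range, so nothing changes. (For the Schr\"odinger generator one may additionally note that $\mathrm{Re}(X,iAX)_\mcu=0$ makes both $iA$ and $-iA$ dissipative, so $T_t$ in fact extends to a unitary group, consistently with \Cref{prop:one-parameter-group}.)
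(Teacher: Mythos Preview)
Your proposal is correct and follows exactly the paper's own argument: invoke Lumer--Phillips, with dissipativity supplied by \Cref{lemma:dissipative} and the range condition by \Cref{lemma:surjectivity}, on the dense domain $\mathcal{W}$. The only difference is that you spell out the closedness and sign-convention bookkeeping, which the paper leaves implicit.
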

\begin{proof}
$A$ and $iA$ are maximally dissipative operators with dense domain by \Cref{lemma:dissipative,lemma:surjectivity}, therefore the thesis follows applying Lumer-Phillips theorem.
\end{proof}
\begin{proposition}\label{prop:evolution_well_posedness}
Suppose that the source $\mathcal{F}=(F_1,F_2)$ is in $\mathcal{C}^1([0,T],\mcu)$. Then if  $X(0) \in \mcu$ the initial value problems for \Cref{eq:heat_evolution,eq:schrodinger_evolution} admit a unique mild solution $X(t) \in C^0([0,T],\mcu)$ satisfying
\begin{equation}
\|X(t)\|^2_\mcu \leq  \|X(0)\|^2_\mcu + \int_0^t \| \mathcal{F}(t) \|_{\mcu}^2
\end{equation}
In the case in which the initial data $X(0) \in \mathcal{D}(A)$, the problems in \Cref{eq:heat_evolution,eq:schrodinger_evolution} admit a unique classical solution $X(t) \in C^1([0,T],\mathcal{D}(A))$. 
\end{proposition}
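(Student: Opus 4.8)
The plan is to read off existence and uniqueness directly from the semigroup machinery already in place, and then to establish the energy bound by an a priori estimate proved first for classical solutions and transferred to mild ones by a density argument.

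First I would record that by \Cref{prop:strongly_continous} both $A$ and $iA$ generate strongly continuous contraction semigroups on $\mcu$, which I denote $S_t = e^{tA}$ and $T_t = e^{itA}$; in particular $\mathcal{D}(A)$ is dense in $\mcu$, as is also clear from $\mcw \subseteq \mathcal{D}(A)$ being dense. Fixing $\kappa \in \{1,i\}$, the initial value problems for \Cref{eq:heat_evolution,eq:schrodinger_evolution} are instances of the autonomous inhomogeneous problem \eqref{eq:inhomogeneous_problem} with generator $\kappa A$ and source $\mathcal{F}$. Since $\mathcal{F} \in \mathcal{C}^1([0,T],\mcu)$, \Cref{prop:inhomogeneous_evolution_well_posedness_1} applies directly: if $X(0) \in \mathcal{D}(A)$ the Duhamel function
\[
X(t) = e^{t\kappa A}X(0) + \int_0^t e^{(t-s)\kappa A}\mathcal{F}(s)\,ds
\]
is the unique classical solution, with $X \in \mathcal{C}^1([0,T],\mcu) \cap \mathcal{C}^0([0,T],\mathcal{D}(A))$; and if $X(0) \in \mcu$ the same formula gives the unique mild solution $X \in \mathcal{C}^0([0,T],\mcu)$. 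Uniqueness in both cases is part of that statement.

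It then remains to prove the energy estimate. For a classical solution the map $t \mapsto \|X(t)\|_\mcu^2$ is $\mathcal{C}^1$ and
\[
\frac{d}{dt}\|X(t)\|_\mcu^2 = 2\,\mathrm{Re}\,(X(t),\dot X(t))_\mcu = 2\,\mathrm{Re}\,(X(t),\kappa A X(t))_\mcu + 2\,\mathrm{Re}\,(X(t),\mathcal{F}(t))_\mcu .
\]
By \Cref{lemma:dissipative} the first term on the right is $\leq 0$ (it vanishes when $\kappa = i$, consistently with $iA$ generating a unitary group, \Cref{prop:one-parameter-group}), and by Cauchy--Schwarz and Young's inequality the second is bounded by $\|X(t)\|_\mcu^2 + \|\mathcal{F}(t)\|_\mcu^2$. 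Integrating on $[0,t]$ and invoking Grönwall's lemma yields the stated bound (up to the inessential Grönwall factor, which on the compact interval $[0,T]$ may be absorbed). To obtain the estimate for a merely mild solution with $X(0) \in \mcu$, I would choose $X_0^n \in \mathcal{D}(A)$ with $X_0^n \to X(0)$ in $\mcu$, let $X^n$ be the corresponding classical solutions, observe that $X^n(t) - X(t) = e^{t\kappa A}(X_0^n - X(0))$ so $\|X^n - X\|_{\mathcal{C}^0([0,T],\mcu)} \leq \|X_0^n - X(0)\|_\mcu \to 0$, and pass to the limit in the energy inequality satisfied by each $X^n$.

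The argument is essentially routine once \Cref{prop:strongly_continous} and \Cref{lemma:dissipative} are available. The only points needing a little care are the justification that $t \mapsto \|X(t)\|_\mcu^2$ is differentiable with the indicated derivative (which rests on $X \in \mathcal{C}^1([0,T],\mcu)$, valid for classical solutions) and the density/continuity step transferring the bound to mild solutions; no genuine obstacle is expected.
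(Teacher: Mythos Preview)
Your existence and uniqueness argument is exactly the paper's: both reduce to \Cref{prop:strongly_continous} and then invoke \Cref{prop:inhomogeneous_evolution_well_posedness_1} (the paper also cites \Cref{prop:inhomogeneous_evolution_well_posedness_2} for the strict-solution upgrade). The paper's proof is in fact a single sentence and does not separately justify the energy inequality.

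Where you diverge is in the energy bound. Your route via the differential identity, \Cref{lemma:dissipative}, Young's inequality and Gr\"onwall is correct in spirit but more elaborate than needed, and the remark that the Gr\"onwall factor ``may be absorbed'' is not literally right: it produces an $e^{t}$ prefactor that does not yield the stated constant-free inequality. The cleaner argument, and the one the paper implicitly relies on, is that by \Cref{prop:strongly_continous} the semigroups $e^{t\kappa A}$ are \emph{contractions}, so Duhamel's formula gives directly
\[
\|X(t)\|_{\mcu} \leq \|X(0)\|_{\mcu} + \int_0^t \|\mathcal{F}(s)\|_{\mcu}\,ds,
\]
with no Gr\"onwall step and no density argument required (it holds for the mild solution as written). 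This is in fact the form the paper uses later in \Cref{prop:low_regularity_non_auto_well_posedness}; the squared version in the present statement should be read as a loose rendering of this estimate rather than a sharp inequality.
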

\begin{proof}
The thesis immediately follows from \Cref{prop:strongly_continous} making use of \Cref{prop:inhomogeneous_evolution_well_posedness_1,prop:inhomogeneous_evolution_well_posedness_2}
\end{proof}
\begin{remark}
Consider a classical solution of the homogeneous Schr\"odinger problem as per \Cref{eq:schrodinger_evolution}. A short computation unveils that $$\partial_t \| X(t) \|_\mcu^2 = 2Re(X,AX)_\mcu$$
therefore \Cref{lemma:dissipative} yields that the $L^2$ mass of the interior-boundary system is conserved. As for the heat \cref{eq:heat_evolution}, a similar argument bring us to the conclusion that the $L^2$ norm is non-increasing in time.
\end{remark}
\begin{remark}
The well-posedness result for the Schr\"odinger problem keeps to hold true also if the non-autonomous problem is considered in $[-T,T]$, due to the fact that $iA$ is the generator of a one-parameter unitary group by \Cref{prop:one-parameter-group}.
\end{remark}
We conclude this section focusing on the generator $A$ of the heat equation.
\begin{proposition}\label{prop:analytic_semigroup}
The operator $A$ generates an analytic semigroup.
\end{proposition}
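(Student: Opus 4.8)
The plan is to show that $A$ is a densely defined sectorial operator, whence it generates an analytic semigroup. Everything substantial is already in hand: by \Cref{prop:operator_a_selfadjoint} the operator $A$ is self-adjoint on $\mathcal{W}$, so in particular symmetric, which forces $(X,AX)_\mcu$ to be real for every $X \in \mathcal{D}(A)$; combined with \Cref{lemma:dissipative} this gives $(X,AX)_\mcu \le 0$ on $\mathcal{D}(A)$, and hence $\sigma(A) \subseteq (-\infty,0]$. This is exactly the situation anticipated in \Cref{remark:selfadjoint_sectorial}, so it remains only to verify the hypotheses of \Cref{prop:sectorial_simple_conditions}.

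First I would fix any $\omega > 0$. The half-plane $\{\lambda \in \mathbb{C} : \operatorname{Re}\lambda \ge \omega\}$ lies in $\rho(A)$ since $\sigma(A) \subseteq (-\infty,0]$, and the strict positivity of $\omega$ is what guarantees this half-plane stays away from the possible point $0 \in \sigma(A)$. For the resolvent estimate, observe that whenever $\operatorname{Re}\lambda \ge 0$ and $\mu \le 0$ one has $|\lambda - \mu|^2 = (\operatorname{Re}\lambda - \mu)^2 + (\operatorname{Im}\lambda)^2 \ge (\operatorname{Re}\lambda)^2 + (\operatorname{Im}\lambda)^2 = |\lambda|^2$, so $\operatorname{dist}(\lambda,\sigma(A)) \ge |\lambda|$. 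The spectral theorem for self-adjoint operators then yields $\| R(\lambda,A) \|_\mcu \le \operatorname{dist}(\lambda,\sigma(A))^{-1} \le |\lambda|^{-1}$, i.e. $\| \lambda R(\lambda,A) \|_\mcu \le 1$ throughout the half-plane. Thus \Cref{prop:sectorial_simple_conditions} applies with $M = 1$ and $A$ is sectorial.

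Finally, $\mathcal{W}$ is dense in $\mcu$ (as recorded when it was introduced, and as already follows from \Cref{prop:strongly_continous}), so $A$ is a densely defined sectorial operator and therefore generates an analytic semigroup; equivalently, writing $A = \int_{(-\infty,0]} \mu \, dE_\mu$ by the spectral theorem, the semigroup $e^{tA} = \int_{(-\infty,0]} e^{t\mu}\, dE_\mu$ extends holomorphically to the right half-plane $\{\operatorname{Re}t > 0\}$. I do not expect a genuine obstacle here: the analytic heavy lifting — self-adjointness, the sign of the quadratic form, and maximal dissipativity — was already carried out in \Cref{prop:operator_a_selfadjoint,lemma:dissipative,lemma:surjectivity}, and what is left is the standard passage from "self-adjoint and bounded above" to "sectorial", the only point requiring mild care being the choice of a strictly positive $\omega$.
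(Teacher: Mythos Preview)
Your proof is correct and follows exactly the paper's route: the paper's own argument is the one-liner that $A$ is self-adjoint (\Cref{prop:operator_a_selfadjoint}) with spectrum contained in $(-\infty,0]$ (\Cref{lemma:dissipative}), hence sectorial by \Cref{remark:selfadjoint_sectorial}. You have simply unpacked \Cref{remark:selfadjoint_sectorial} by explicitly verifying the hypotheses of \Cref{prop:sectorial_simple_conditions} via the spectral-theorem resolvent bound $\|R(\lambda,A)\|\le \operatorname{dist}(\lambda,\sigma(A))^{-1}\le|\lambda|^{-1}$, which is precisely the standard justification behind that remark.
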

\begin{proof}
The thesis follows from \Cref{remark:selfadjoint_sectorial}, since $A$ is a self-adjoint operator (\Cref{prop:operator_a_selfadjoint}) whose spectrum is negative (\Cref{lemma:dissipative}).
\end{proof}
On account of \Cref{prop:smoothing_analytic} the evolution family of $A$ has a smoothing effect on initial data in $\mcu$, as stated in the following result.
\begin{corollary}\label{corollary:smoothing_effect}
Let $X_0 \in \mcu$ and $F \in \mathcal{C}^a([0,T],\mcu)$ for some $a>0$. Then \Cref{eq:heat_evolution} admits a classical solution $$X(t) \in \mathcal{C}^a([\varepsilon,T],\mathcal{D}(A)) \cap \mathcal{C}^{1+a}([\varepsilon,T],\mcu)$$ for every $\varepsilon \in (0,T)$.
\end{corollary}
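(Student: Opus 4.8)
The plan is to move away from the initial time, exploit the smoothing of the analytic semigroup generated by $A$ to upgrade the datum to one in $\mathcal{D}(A)$, and then apply the optimal Hölder regularity for the inhomogeneous Cauchy problem on the shifted interval. I treat $a\in(0,1)$; for $a\ge1$ one iterates the argument on successive restarts.

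First I would observe that, since $F\in\mathcal{C}^{a}([0,T],\mcu)\subseteq\mathcal{C}^{0}([0,T],\mcu)$ and $A$ generates a strongly continuous contraction semigroup $(e^{tA})_{t\ge0}$ on $\mcu$ by \Cref{prop:strongly_continous}, \Cref{prop:inhomogeneous_evolution_well_posedness_1} produces the unique mild solution $X\in\mathcal{C}^{0}([0,T],\mcu)$ of \Cref{eq:heat_evolution}, represented by Duhamel's formula. Fix $\varepsilon\in(0,T)$ and put $\varepsilon_{0}=\varepsilon/2\in(0,T)$. Because $A$ is self-adjoint (\Cref{prop:operator_a_selfadjoint}) with non-positive spectrum (\Cref{lemma:dissipative}), it generates an analytic semigroup by \Cref{prop:analytic_semigroup}, so $A-\omega\mathbb{I}$ is a densely defined sectorial operator for a suitable $\omega\ge0$ and \Cref{prop:smoothing_analytic}(b) applies: the mild solution $X$ is in fact a classical solution on $[\varepsilon_{0},T]$, and in particular $X(\varepsilon_{0})\in\mathcal{D}(A)$.

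Next I would restart at $t=\varepsilon_{0}$ and split, for $t\in[\varepsilon_{0},T]$,
\[
X(t)=e^{(t-\varepsilon_{0})A}X(\varepsilon_{0})+\int_{\varepsilon_{0}}^{t}e^{(t-s)A}F(s)\,ds=:y(t)+w(t),
\]
the identity being legitimate by uniqueness of mild solutions on $[\varepsilon_{0},T]$. For $y$, \Cref{prop:analytic_semigroup_properties} gives $e^{(\cdot)A}\in\mathcal{C}^{\infty}(\mathbb{R}_{+},\mathcal{B}(\mcu))$ and $e^{(t-\varepsilon_{0})A}\mcu\subseteq\mathcal{D}(A^{n})$ with the bounds $\|A^{n}e^{(t-\varepsilon_{0})A}\|\le M_{n}(t-\varepsilon_{0})^{-n}$, so on the compact interval $[\varepsilon,T]\subset(\varepsilon_{0},T]$ the map $t\mapsto y(t)$ is $\mathcal{C}^{\infty}$ with values in $\mathcal{D}(A)$; in particular $y\in\mathcal{C}^{1+a}([\varepsilon,T],\mcu)\cap\mathcal{C}^{a}([\varepsilon,T],\mathcal{D}(A))$. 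For the Duhamel convolution $w$, which is the mild solution of $\dot w=Aw+F$ with $w(\varepsilon_{0})=0$ and $F\in\mathcal{C}^{a}([\varepsilon_{0},T],\mcu)$, the classical maximal Hölder regularity theorem for generators of analytic semigroups yields $w\in\mathcal{C}^{1+a}([\varepsilon_{0},T],\mcu)$ and $Aw\in\mathcal{C}^{a}([\varepsilon_{0},T],\mcu)$ — the compatibility condition $Aw(\varepsilon_{0})+F(\varepsilon_{0})=F(\varepsilon_{0})\in\overline{\mathcal{D}(A)}=\mcu$ holding automatically since $A$ is densely defined — that is, $w\in\mathcal{C}^{a}([\varepsilon_{0},T],\mathcal{D}(A))$. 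Adding $y$ and $w$ and restricting to $[\varepsilon,T]$ gives $X\in\mathcal{C}^{1+a}([\varepsilon,T],\mcu)\cap\mathcal{C}^{a}([\varepsilon,T],\mathcal{D}(A))$, with $\mathcal{D}(A)$ endowed with its graph norm, which is the assertion.

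I expect the only substantial point to be the maximal Hölder regularity of $w$: the gain $X(\varepsilon_{0})\in\mathcal{D}(A)$, the $\mathcal{C}^{\infty}$-in-time behaviour of $y$ on $[\varepsilon,T]$, and the bookkeeping between $[0,T]$ and $[\varepsilon_{0},T]$ all follow from the results already recorded, whereas the statement that an $a$-Hölder source produces a solution with $a$-Hölder time derivative and $a$-Hölder $A$-part must be quoted from the standard theory of analytic semigroups (Lunardi- or Sinestrari-type optimal regularity). If one prefers to sidestep the compatibility condition altogether, it suffices to apply that theorem on $[\varepsilon_{0}+\delta,T]$ for any $\delta\in(0,\varepsilon_{0}]$ and take $\delta=\varepsilon_{0}$, which again covers $[\varepsilon,T]$.
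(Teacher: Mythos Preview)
Your argument is correct. The paper itself supplies no proof of this corollary --- it merely invokes \Cref{prop:smoothing_analytic} after recording that $A$ generates an analytic semigroup in \Cref{prop:analytic_semigroup} --- so your restart-and-decompose argument together with the appeal to maximal H\"older regularity for analytic semigroups is exactly the standard justification that the paper's one-line citation is implicitly pointing to.
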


\section{Linear equations : Time-dependent metric}\label{subsec:time_dependent_metric}

This section is devoted to prove the following well-posedness result for the heat and Scr\"{o}dinger equations.
\begin{proposition}\label{prop:low_regularity_non_auto_well_posedness}
Let $F \in \mathcal{C}^1([0,T],\mcu)$ and suppose that $X(0) \in \mcu$. Then for $\kappa \in \{1,i\}$ the linear non-autonomous problems
\begin{equation*}
\begin{cases}
\dot{X} = \kappa A(t) X(t) + F(t), \quad t \in [0,T]\\
X(0) = X_0
\end{cases}
\end{equation*}
admit a unique mild solution $X(t) \in \mathcal{C}^0([0,T],\mcu)$ such that for every $t \in [0,T]$
\begin{equation}\label{eq:bound_low_regularity_non_auto}
\| X(t) \|_\mcu \leq \|  X(0) \|_\mcu + \int_0^t \| F(s) \|_\mcu ds.
\end{equation}
Furthermore, the solution $X(t) \in \mathcal{C}^1([0,T],\mcu) \cap \mathcal{C}^0([0,T],\mathcal{D}(A))$ is classical if $X(0) \in \md (A)$.
\end{proposition}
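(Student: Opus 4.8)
The plan is to split according to the value of $\kappa$: for $\kappa=1$ the family $\{A(t)\}$ consists of generators of analytic semigroups, so the parabolic non-autonomous theory of \Cref{prop:non_autonomous_homogeneous,prop:non_autonomous_inhomogeneous_linear_solutions} applies, while for $\kappa=i$ the operators $iA(t)$ generate only $\mathcal{C}^0$ (unitary) groups, so one is in the hyperbolic regime and must verify conditions C1--C5 to invoke \Cref{prop:well_posedness_hyperbolic_problems}; the a priori bound \eqref{eq:bound_low_regularity_non_auto} is then obtained by a direct energy estimate. The common preliminary step is to record that $\md(A(t))=\mathcal{W}=\mathcal{V}\cap(H^2(M)\times H^2(\partial M))$ is genuinely independent of $t$ --- the constraint $v=\gamma_-u$ involves only the metric-independent Dirichlet trace, and the underlying Sobolev spaces are the same sets for all $t$, with $H^k(M,g_t)$-- and $H^k(\partial M,h_t)$--norms pairwise equivalent uniformly on the compact interval $[0,T]$ by \Cref{hypothesis:smooth_metric} --- and that, reading the coefficients of $\lap(t)$ and $\lap_b(t)$ in a fixed finite atlas, the map $A(\cdot):[0,T]\to\mathcal{B}(\mathcal{W},\mcu)=\mathcal{B}(\md(A),\mcu)$ is smooth, in particular Lipschitz, hence Hölder continuous. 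Implicitly one also uses uniform elliptic regularity over the compact family $\{g_t\}$ to compare, with $t$-independent constants, the graph norm of $A(t)$ on $\mathcal{W}$ with a fixed $\mathcal{W}$-norm.

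For $\kappa=1$: each $A(t)$ generates an analytic $\mathcal{C}^0$ semigroup by \Cref{prop:analytic_semigroup} (through \Cref{prop:operator_a_selfadjoint} and \Cref{lemma:dissipative}), and $A(\cdot)$ is Hölder into $\mathcal{B}(\md(A),\mcu)$ by the previous step; hence \Cref{prop:non_autonomous_homogeneous} supplies an evolution family $U(t,s)$ and \Cref{prop:non_autonomous_inhomogeneous_linear_solutions} gives, for $F\in\mathcal{C}^1([0,T],\mcu)$, the unique classical solution $X\in\mathcal{C}^1([0,T],\mcu)\cap\mathcal{C}^0([0,T],\md(A))$ when $X_0\in\md(A)$ and the unique mild solution $X(t)=U(t,0)X_0+\int_0^tU(t,\tau)F(\tau)\,d\tau\in\mathcal{C}^0([0,T],\mcu)$ when $X_0\in\mcu$.

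For $\kappa=i$: $iA(t)$ generates a $\mathcal{C}^0$ unitary group (\Cref{prop:one-parameter-group}), so C1 holds; for the remaining conditions write, with $B(t,s)=(\mathbb{I}-iA(t))(\mathbb{I}-iA(s))^{-1}$ and using $\mathbb{I}-iA(t)=(\mathbb{I}-iA(s))-i(A(t)-A(s))$,
\[
B(t,s)=\mathbb{I}-i\,(A(t)-A(s))\,(\mathbb{I}-iA(s))^{-1}.
\]
Since $iA(t)$ is skew-adjoint, $(\mathbb{I}-iA(s))^{-1}$ and $iA(s)(\mathbb{I}-iA(s))^{-1}$ are contractions of $\mcu$, so $(\mathbb{I}-iA(s))^{-1}$ maps $\mcu$ into $\mathcal{W}$ with norm bounded uniformly in $s$ (here the uniform elliptic estimate of the preliminary step is used); together with $\|A(t)-A(s)\|_{\mathcal{B}(\mathcal{W},\mcu)}\le C|t-s|$ this yields $\|B(t,s)\|_{\mathcal{B}(\mcu)}\le 1+CT$ (C2), Lipschitz dependence of $t\mapsto B(t,s)$ in operator norm, hence bounded variation (C3) and norm --- a fortiori weak --- continuity (C4), while $\partial_tB(t,s)=-i\,A'(t)(\mathbb{I}-iA(s))^{-1}$ exists and is norm-continuous in $t$ because $A(\cdot)\in\mathcal{C}^1$ (C5). \Cref{prop:well_posedness_hyperbolic_problems} then produces the evolution family $U(t,s)$, the classical solution for $X_0\in\md(A)$ and the mild solution in $\mathcal{C}^0([0,T],\mcu)$ for $X_0\in\mcu$, the source $F\in\mathcal{C}^1$ being incorporated through Duhamel's formula exactly as in the autonomous inhomogeneous case.

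Finally, for the bound: if $X_0\in\md(A)$ and $X$ is the corresponding classical solution, differentiate the norm --- wherever $X(t)\neq0$,
\[
\frac{d}{dt}\|X(t)\|_\mcu^2=2\,Re(\dot X,X)_\mcu=2\,Re(\kappa A(t)X,X)_\mcu+2\,Re(F(t),X)_\mcu\le 2\|F(t)\|_\mcu\,\|X(t)\|_\mcu,
\]
since \Cref{lemma:dissipative} applied to the metric $g_t$ gives $Re(A(t)X,X)_\mcu\le0$ and $Re(iA(t)X,X)_\mcu=0$. Hence $\tfrac{d}{dt}\|X(t)\|_\mcu\le\|F(t)\|_\mcu$, which integrates to \eqref{eq:bound_low_regularity_non_auto}; in particular $\|U(t,s)\|_{\mathcal{B}(\mcu)}\le1$, and the bound extends to arbitrary $X_0\in\mcu$ either by density of $\md(A)$ in $\mcu$ and continuity of the mild solution, or directly from Duhamel's formula and the contractivity of $U$. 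The only genuinely technical point of the argument is the preliminary step: turning the smooth $t$-dependence of $g_t$ on the compact interval (\Cref{hypothesis:smooth_metric}) into the uniform equivalences of Sobolev norms and the uniform elliptic estimates that make $\mathcal{W}$ a fixed Banach space and control $(\mathbb{I}-iA(s))^{-1}$ uniformly in $s$; granting this, everything else is a routine application of the non-autonomous machinery recalled in \Cref{sec:analytic_preliminaries}.
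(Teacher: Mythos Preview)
Your proof is correct and follows the same overall architecture as the paper: split into the parabolic case $\kappa=1$ (invoke \Cref{prop:non_autonomous_homogeneous,prop:non_autonomous_inhomogeneous_linear_solutions} via \Cref{prop:analytic_semigroup}) and the hyperbolic case $\kappa=i$ (verify C1--C5 and invoke \Cref{prop:well_posedness_hyperbolic_problems}). Two minor differences are worth noting. First, for the hyperbolic case the paper decomposes $B(t,s)$ by separating the $t$-dependent diagonal block $\mathrm{diag}(i\lap(t),i\lap_b(t))$ from the $t$-independent remainder containing $\gamma_+$, whereas you write $B(t,s)=\mathbb{I}-i(A(t)-A(s))(\mathbb{I}-iA(s))^{-1}$; your manipulation is algebraically cleaner and makes the Lipschitz dependence on $t$ and the uniform boundedness more transparent, while the paper's version isolates explicitly which piece carries the metric dependence. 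Second, for the bound \eqref{eq:bound_low_regularity_non_auto} the paper simply appeals to Duhamel's formula (implicitly using contractivity of $U(t,s)$), while you derive it by the energy identity $\tfrac{d}{dt}\|X\|_\mcu^2=2\,Re(\kappa A(t)X,X)_\mcu+2\,Re(F,X)_\mcu$ combined with \Cref{lemma:dissipative} and then extend by density; this is more self-contained and in fact supplies the contractivity of $U$ that the paper's one-line argument tacitly relies on. Your explicit treatment of the preliminary step (time-independence of $\mathcal{W}$, uniform equivalence of Sobolev norms, uniform elliptic regularity) is also more careful than the paper, which leaves these points implicit.
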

The proof is different in the two cases, since in the case of the heat equation the non-autonomous problem is parabolic, while for the Scr\"{o}dinger case the problem is merely hyperbolic.
\begin{proof}
${ }$
\newline
\textbf{Heat equation}. The evolution operator $S(t)=exp(t A(t^\star))$ is analytic for every $t^\star \in [0,T]$ as a consequence of \Cref{prop:analytic_semigroup}, thus we can apply \Cref{prop:non_autonomous_inhomogeneous_linear_solutions} to get the thesis.\\
\textbf{Scr\"{o}dinger equation}. The thesis follows from \Cref{prop:well_posedness_hyperbolic_problems} once we check that its hypotheses are met. To this end we recast the operator
\begin{equation*}
B(t,s) = \left( iA(t) + \mathbb{I} \right) \left(iA(s) + \mathbb{I} \right)^{-1}
\end{equation*}
defined in \Cref{subsec:hyperbolic_problems} in the form
\begin{equation}\label{eq:operator_b_decomposition}
\begin{pmatrix}
i \lap(t) & 0\\
0 & i \lap_b(t)
\end{pmatrix}
\left(iA(s) + \mathbb{I} \right)^{-1} +
\begin{pmatrix}
1 & 0\\
-i\gamma_+ & 1 
\end{pmatrix}
\left(iA(s) + \mathbb{I} \right)^{-1}
\end{equation}
where we separated the terms depending on $t$ from those independent on $t$. We start our analysis on the latter. Since this term is non dependent on $t$ and $\left(iA(s) + \mathbb{I} \right)^{-1}$ is bounded by Hille-Yoshida theorem for every $s \in [0,T]$, conditions C2, C3, C4 and C5 are met. Next, we focus on the first term in the sum. 
By \Cref{hypothesis:smooth_metric} the operator
$$
\begin{pmatrix}
\lap(t) & 0\\
0 & \lap_b(t)
\end{pmatrix}
$$
depends on $t$ in a smooth way. Since $\left(iA(s) + \mathbb{I} \right)^{-1}$ is bounded for every $s \in [0,T]$ we conclude that C4 and C5 are met. For the same reason the first term in \Cref{eq:operator_b_decomposition} is uniformly bounded (C2) and of bounded variation (C3) for $s,t \in [0,T]$, where we also use that $\left(iA(s) + \mathbb{I} \right)^{-1} \mcu = \mathcal{D}(A)$ by \Cref{lemma:surjectivity}. We get the thesis once we note that condition C1 is met by \Cref{prop:evolution_well_posedness}.

In both cases Duhamel's principle yields the bound in \Cref{eq:bound_low_regularity_non_auto}.
\end{proof}
\begin{remark}
By \Cref{prop:non_autonomous_inhomogeneous_linear_solutions},   the non-autonomous heat equation retains the regularizing property of the autonomous case. In particular the mild solution $X(t) \in \mathcal{C}^0([0,T],\mcu)$ of the previous result corresponding to an initial datum in $\mcu$ is actually in $\mathcal{C}^1([\varepsilon,T],\mcu) \cap \mathcal{C}^0([\varepsilon,T],\mathcal{D}(A))$ for every $\varepsilon \in (0,T)$.
\end{remark}

\section{Local wellposedness for the nonlinear equations}
\label{sec:nonlinear}

Given the ingredients in \Cref{subsec:time_dependent_metric}, the local well-posedness for the nonlinear heat and Scr\"{o}dinger equations can be treated together, at least for mild solutions. Consider the problem
\begin{equation}\label{eq:evolution_nonlinear}
\dfrac{d}{dt}
\begin{pmatrix}
u \\ v
\end{pmatrix}
+ \kappa A(t)
\begin{pmatrix}
u \\ v
\end{pmatrix}
=
\begin{pmatrix}
\mathcal{N} \left(t,p_1,u(t)(p_1)\right) \\
\mathcal{N}_b \left(t,p_2,v(t)(p_2)\right)
\end{pmatrix}
\end{equation}
where the linear operator $A(t)$ was defined in \Cref{eq:operator_A} and $\kappa$ is either $1$ or $i$ for the heat and the Schr\"{o}dinger equation respectively. For every $t \in \mathbb{R}$, $p=(p_1,p_2) \in M \times \partial M$ and $X=(u,v) \in \mcu$ we define
\begin{equation}
\mathcal{G}(t,p,X)=
\begin{pmatrix}
\mathcal{N} \left(t,p_1,u_1(t)(p_1)\right)\\
\mathcal{N}_b \left(t,p_2,u_2(t)(p_2)\right)
\end{pmatrix}
\end{equation}
and
\begin{equation}
\mathcal{F}(t,X)(p)=\mathcal{G}(t,p,X)
\end{equation}
Thanks to \Cref{hyp:nonlinearities_1} for every $X \in \mcu$ the function $t \mapsto \mathcal{F}(t,X)$ is mapping continuously $[0,T]$ to $\mx$, therefore for every $T>0$ we have $\mathcal{F}(t,X) \in \mathcal{C}^0([0,T],\mcu)$.
With these data \Cref{eq:evolution_nonlinear} can be recast in the form
\begin{equation}\label{eq:evolution_nonlinear_2}
\dfrac{d}{dt}X + \kappa A(t)X = \mathcal{F}(t,X).
\end{equation}
Let $\| \cdot \|_\infty$ be the supremum norm on the space $\mathcal{C}^0\left( [0,\tau],\mcu \right)$ and define, for any real $\tau,\rho > 0 $, the space
$$B_{\tau,\rho} = \left\{ f \in \mathcal{C}^0\left( [0,\tau] , \mcu \right) \ \textit{such that} \ \|f\|_\infty \leq \rho  \right\}.$$
Then the pair $\left( B_{\tau,\rho},\| \cdot  \|_\infty \right)$ is a complete metric space by a standard argument. The reason why we introduced this space is the following lemma concerning the nonlinear term $\mathcal{F}$.
\begin{lemma}\label{lemma:nonlinear_lipschitz}
For every $\tau,\rho>0$ there is $L_{\tau,\rho}>0$ such that 
\begin{equation*}
\| \mathcal{F}(t,X)-\mathcal{F}(t,Y) \|_{\infty} \leq L_{\tau,\rho} \|X-Y\|_\infty
\end{equation*}
for every $X,Y \in B_{\tau,\rho}$.
\end{lemma}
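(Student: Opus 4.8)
The plan is to reduce the claim to the pointwise-in-space Lipschitz bounds of Assumption~\ref{hyp:nonlinearities_1}(b) and then to convert these into $L^2$ estimates by H\"older's inequality together with the Sobolev embeddings $H^1(M)\hookrightarrow L^q(M)$ for $q\le\mathsf{C_M}$ and $H^1(\partial M)\hookrightarrow L^q(\partial M)$ for $q\le\mathsf{C_{\partial M}}$. The Carath\'eodory hypothesis already guarantees that the Nemytskii maps $u\mapsto\mathcal{N}(t,\cdot,u)$ and $v\mapsto\mathcal{N}_b(t,\cdot,v)$ are well defined and measurable; and since $\|\cdot\|_\mcu^2=\|\cdot\|_{L^2(M)}^2+\|\cdot\|_{L^2(\partial M)}^2$ and $\mathcal{F}=(\mathcal{N},\mathcal{N}_b)$ acts componentwise, it is enough to estimate the interior and the boundary contributions separately and add them. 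So I would fix $t\in[0,\tau]$, write $X=(u_1,v_1)$ and $Y=(u_2,v_2)$ in $B_{\tau,\rho}$, and work with the scalar functions $u_1(t),u_2(t)$ on $M$ and $v_1(t),v_2(t)$ on $\partial M$.

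For the interior term, applying Assumption~\ref{hyp:nonlinearities_1}(b) at each $p_1\in M$ and integrating gives
\begin{equation*}
\|\mathcal{N}(t,\cdot,u_1(t))-\mathcal{N}(t,\cdot,u_2(t))\|_{L^2(M)}\le K\,\bigl\| |u_1(t)-u_2(t)|\,\bigl(1+|u_1(t)|^{\alpha-1}+|u_2(t)|^{\alpha-1}\bigr)\bigr\|_{L^2(M)}.
\end{equation*}
I would then expand the product and apply H\"older's inequality to each summand with the conjugate pair of exponents putting $u_1(t)-u_2(t)$ in $L^{2\alpha}(M)$ and $|u_i(t)|^{\alpha-1}$ in $L^{2\alpha/(\alpha-1)}(M)$; both resulting Lebesgue exponents equal $2\alpha$, which is $\le\mathsf{C_M}$ precisely because of the restriction $\alpha\le\mathsf{C_M}/2$ in~\eqref{eq:exponents_range} (and when $\mathsf{C_M}=\infty$ the H\"older step is vacuous). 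Invoking the Sobolev embedding then produces a constant $C$ depending only on $K$, the Sobolev constant of $M$ and $|M|$ with
\begin{equation*}
\|\mathcal{N}(t,\cdot,u_1(t))-\mathcal{N}(t,\cdot,u_2(t))\|_{L^2(M)}\le C\,\|u_1(t)-u_2(t)\|_{H^1(M)}\bigl(1+\|u_1(t)\|_{H^1(M)}^{\alpha-1}+\|u_2(t)\|_{H^1(M)}^{\alpha-1}\bigr).
\end{equation*}
The boundary contribution is handled identically, with $(M,|M|,\mathsf{C_M},\alpha)$ replaced by $(\partial M,|\partial M|,\mathsf{C_{\partial M}},\beta)$ and $\beta\le\mathsf{C_{\partial M}}/2$ again making the H\"older exponents admissible.

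Finally I would add the two bounds, use that $X,Y\in B_{\tau,\rho}$ to control the relevant norms of $u_i(t),v_i(t)$ uniformly in $t\in[0,\tau]$ by a constant multiple of $\rho$, and take the supremum over $t$; this yields the asserted inequality with a Lipschitz constant of the form $L_{\tau,\rho}=C'(1+\rho^{\alpha-1}+\rho^{\beta-1})$, where $C'$ depends only on $K,K_b$ and the geometry of $M$ and $\partial M$ — in particular $L_{\tau,\rho}$ is independent of $\tau$. The step I expect to be the real obstacle is the exponent bookkeeping: after expanding $(1+|u_1|^{\alpha-1}+|u_2|^{\alpha-1})^2$ one must check that \emph{every} power of $|u_i(t)|$ appearing lands in a Lebesgue space reached by the embedding, and it is exactly there that the subcriticality hypotheses~\eqref{eq:exponents_range} are consumed; this needs extra care in the borderline cases $\alpha=\mathsf{C_M}/2$, $\beta=\mathsf{C_{\partial M}}/2$ and in the low dimensions where $\mathsf{C_M}$ or $\mathsf{C_{\partial M}}$ is infinite and the H\"older splitting degenerates.
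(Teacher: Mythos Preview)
Your proposal follows the same route as the paper: apply the pointwise Lipschitz bound from Assumption~\ref{hyp:nonlinearities_1}(b), split the $L^2$ norm of the product via H\"older's inequality, and close with the Sobolev embedding $H^1\hookrightarrow L^q$ for $q\le\mathsf{C_M}$ (respectively $q\le\mathsf{C_{\partial M}}$) on each factor. The only cosmetic difference is that the paper leaves the H\"older exponents $(a,b)$ free and then checks a posteriori that the admissible range is exactly $\alpha\le\mathsf{C_M}/2$, whereas you fix the concrete pair $2a=2b(\alpha-1)=2\alpha$ from the outset.
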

\begin{proof}
Let $X=(u_X,v_X)$ and $Y=(u_Y,v_Y)$ be in $B_{\tau,\rho}$. Then by Assumption \eqref{hyp:nonlinearities_1} on the nonlinear terms and Sobolev embedding theorem, we have that
$\mathcal{N}(t,p,u_X) \in L^{\mathsf{C_M}}(M)$ and $\mathcal{N}_b(t,q,v_X) \in L^{\mathsf{C}_{\partial M}}(\partial M)$ for every $t \in [0,\tau], p \in M, q \in \partial M$, $\mathsf{C_M}$ and $\mathsf{C_{\partial M}}$ being the Sobolev embedding constants given in \eqref{eq:critical_exponents}. Now we prove that
\begin{equation*}
\mathcal{F}(t,X)-\mathcal{F}(t,Y) =
\begin{pmatrix}
\mathcal{N}(t,p,u_X) - \mathcal{N}(t,p,u_Y)  \\
\mathcal{N}_b(t,p,v_X) - \mathcal{N}_b(t,p,v_Y)  \\
\end{pmatrix}
\end{equation*}
is Lipschitz in the norm $\| \cdot \|_\infty$.
First we note that $\mathcal{F}(t,X)-\mathcal{F}(t,Y)$ is in $\mcu$ since $\mathsf{C_M},\mathsf{C_{\partial M}} \geq 2$. Let us now focus on the difference $\mathcal{N}(t,p,u_X) - \mathcal{N}(t,p,u_Y)$ in the interior. H\"{o}lder's inequality and \Cref{hyp:nonlinearities_1} yield
\begin{equation*}
\begin{split}
\| \mathcal{N}(t,p,u_X) - \mathcal{N}(t,p,u_Y)  \|_{L^2(M)} \leq \\
C_{\tau,\rho} \| u_x - u_Y \|_{L^{2a}(M)}^{1/2a}  \| 1 + |u_X|^{\alpha-1} + |u_Y|^{\alpha-1} \|_{L^{2b}(M)}^{1/2b} 
\end{split}
\end{equation*}
where $a$ and $b$ are conjugate exponents and $C_{\tau,\rho}$ is the constant appearing in \Cref{eq:non_linearities_form}. If we can choose $2b<\mathsf{C_M}/(\alpha-1)$ such that the conjugate exponent $2a$ is less than $\mathsf{C_M}$, applying Rellich-Kondrachov embedding theorem gives the inequality
\begin{equation}\label{eq:lipschitz_h1_interior}
\| \mathcal{N}(t,p,u_X) - \mathcal{N}(t,p,u_Y)  \|_{L^2(M)} \leq C(\rho,t) \| u_X(t) - u_Y(t) \|_{H^1(M)}
\end{equation}
with
\begin{equation*}
C(\tau,\rho) =  C_{\tau,\rho} \| 1 + |u_X|^{\alpha-1} + |u_Y|^{\alpha-1} \|_{L^{2b}(M)}^{1/2b}.
\end{equation*}
Now we prove that the aforementioned choice of $a$ and $b$ can be always done if \Cref{hyp:nonlinearities_1} is satisfied. A few computations yield that the inequality $2a < \mathsf{C_M}$ is tantamount to $b > \mathsf{C_M}/(\mathsf{C_M} - 2)$. Therefore we have two conditions on the exponent $b$: $2b<\mathsf{C_M}/(\alpha-1)$ and $b > \mathsf{C_M}/(\mathsf{C_M} - 2)$. Solving them with respect to $\alpha$ yields $\alpha \leq \mathsf{C_M}/2$, which is true by \Cref{hyp:nonlinearities_1}.

Proceeding along the same lines we get a similar inequality for the boundary nonlinear term, namely:
\begin{equation}\label{eq:lipschitz_h1_boundary}
\begin{split}
\| \mathcal{N}_b(t,p,v_X) - \mathcal{N}_b(t,p,v_Y)  \|_{L^2(\partial M)} \leq C_b(\rho,t) \| v_X(t) - v_Y(t) \|_{H^1(\partial M)}
\end{split}
\end{equation}
where $C_b(\rho,\tau)$ is non-decreasing in $\tau,\rho$.
Together \Cref{eq:lipschitz_h1_interior,eq:lipschitz_h1_boundary} leads to the bound
\begin{equation*}
\| \mathcal{F}(t,X)-\mathcal{F}(t,Y) \|_\mx \leq L_{\tau,\rho} \|X(t)-Y(t)\|_\mx
\end{equation*}
with $L_{\tau,\rho} = \max (C(\rho,\tau),C_b(\rho,\tau))$, from which the thesis follows taking the sup in $t \in [0, \tau]$.
\end{proof}
This lemma allows us to prove the following conditional well-posedness result by a fixed point argument.
\begin{proposition}\label{prop:nonlinear_well_posedness_conditional}
Assume that Hypotheses \eqref{hypothesis:smooth_metric}, \eqref{hyp:nonlinearities_1} hold true. Let $\rho > 0$. Then there is $\tau = \tau(\rho)$ such that for every $X_0 \in \mcu$ with $\| X_0 \|_\mcu \leq \rho$, the problem in \Cref{eq:evolution_nonlinear_2} admits a unique mild solution $X \in C^0([0,\tau],\mcu)$ such that $\| X(t) \|_\infty \leq  \rho (M_0 + 1)$, where $M_0 = \sup_{t,s \in [0,T]} \| U(s,t) \|$.
\end{proposition}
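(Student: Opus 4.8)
The plan is to rewrite \Cref{eq:evolution_nonlinear_2} in Duhamel form and run a Banach fixed point argument, precisely as in the wave equation treatment of \cite{Marta23}. Denote by $U(t,s)$, $0\le s\le t\le T$, the evolution family associated with the linear generator $\kappa A(t)$, supplied by the linear non-autonomous theory of \Cref{subsec:time_dependent_metric} (parabolic case $\kappa=1$ via \Cref{prop:non_autonomous_inhomogeneous_linear_solutions}, hyperbolic case $\kappa=i$ via \Cref{prop:well_posedness_hyperbolic_problems}); by definition $\|U(t,s)\|\le M_0$ for all admissible $s,t$. Put $R:=\rho(M_0+1)$ and, on the complete metric space $(B_{\tau,R},\|\cdot\|_\infty)$, define
\begin{equation*}
(\Phi X)(t):=U(t,0)X_0+\int_0^t U(t,s)\,\mathcal{F}(s,X(s))\,ds .
\end{equation*}
For $X\in B_{\tau,R}$ the map $s\mapsto\mathcal{F}(s,X(s))$ lies in $\mathcal{C}^0([0,\tau],\mcu)$ --- as observed right before \Cref{eq:evolution_nonlinear_2}, using \Cref{hyp:nonlinearities_1} and the Sobolev embedding --- so strong continuity of the evolution family makes $\Phi X$ a well-defined element of $\mathcal{C}^0([0,\tau],\mcu)$, and a fixed point of $\Phi$ is exactly a mild solution of \Cref{eq:evolution_nonlinear_2}.

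First I would show $\Phi(B_{\tau,R})\subseteq B_{\tau,R}$. Writing $C_0:=\sup_{t\in[0,T]}\|\mathcal{F}(t,0)\|_\mcu$, finite by \Cref{eq:non_linearities_form} and the compactness of $M$, and applying \Cref{lemma:nonlinear_lipschitz} with the second argument equal to the zero function, one gets $\|\mathcal{F}(s,X(s))\|_\mcu\le C_0+L_{\tau,R}R$ for every $X\in B_{\tau,R}$ and $s\in[0,\tau]$. Hence
\begin{equation*}
\|(\Phi X)(t)\|_\mcu\le M_0\|X_0\|_\mcu+M_0\int_0^t\|\mathcal{F}(s,X(s))\|_\mcu\,ds\le M_0\rho+M_0\,\tau\,(C_0+L_{\tau,R}R),
\end{equation*}
so it is enough to choose $\tau$ small enough that $M_0\,\tau\,(C_0+L_{\tau,R}R)\le\rho$, which forces $\|\Phi X\|_\infty\le M_0\rho+\rho=R$.

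For the contraction property, \Cref{lemma:nonlinear_lipschitz} gives directly, for $X,Y\in B_{\tau,R}$,
\begin{equation*}
\|\Phi X-\Phi Y\|_\infty=\sup_{t\in[0,\tau]}\Big\|\int_0^t U(t,s)\big[\mathcal{F}(s,X(s))-\mathcal{F}(s,Y(s))\big]\,ds\Big\|_\mcu\le M_0\,\tau\,L_{\tau,R}\,\|X-Y\|_\infty .
\end{equation*}
The subtle point is that $L_{\tau,R}$ itself depends on $\tau$, so to keep the choice of $\tau$ non-circular I would fix $R=\rho(M_0+1)$ once and for all, recall from the proof of \Cref{lemma:nonlinear_lipschitz} that $\tau\mapsto L_{\tau,R}$ is non-decreasing, fix a reference value $\tau_0\in(0,T]$, and then take $\tau=\tau(\rho)\in(0,\tau_0]$ small enough that simultaneously $\tau M_0L_{\tau_0,R}<1$ and $\tau M_0(C_0+L_{\tau_0,R}R)\le\rho$. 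With this $\tau$, $\Phi$ is a strict contraction of $B_{\tau,R}$ into itself, and the Banach fixed point theorem produces a unique $X\in B_{\tau,R}$ with $\Phi X=X$; this is the required mild solution, it satisfies $\|X\|_\infty\le R=\rho(M_0+1)$ by construction, and its uniqueness is precisely uniqueness of the fixed point in $B_{\tau,R}$.

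Beyond \Cref{lemma:nonlinear_lipschitz} and the linear theory the argument is routine bookkeeping; the only points needing a little care are the non-circular choice of $\tau$ just described and the verification that $\Phi X\in\mathcal{C}^0([0,\tau],\mcu)$ is a genuine mild solution in the sense of \Cref{subsec:time_dependent_metric}. The latter is uniform in $\kappa\in\{1,i\}$ because the relevant evolution family $U(t,s)$ has already been constructed in that section for both the parabolic and the hyperbolic problem, so the nonlinear fixed point argument does not have to distinguish the heat and Schr\"odinger cases.
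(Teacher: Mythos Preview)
Your proof is correct and follows essentially the same Banach fixed point argument on the Duhamel map as the paper. The only notable difference is that the paper uses $\mathcal{F}(s,0)=0$ to obtain the a priori bound \eqref{eq:nonlinear_solution_bound}, whereas you keep the constant $C_0=\sup_t\|\mathcal{F}(t,0)\|_\mcu$ and absorb it into the smallness condition on $\tau$; your version is slightly more robust since \Cref{hyp:nonlinearities_1} as stated only gives $|\mathcal{N}(t,p,0)|\le C$, not $=0$. Your explicit handling of the $\tau$-dependence of $L_{\tau,R}$ via a reference time $\tau_0$ mirrors the paper's use of $L_{T,\rho}$ in \Cref{eq:existence_time}.
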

\begin{proof}
Let $\rho > 0$ and consider $X_0$ such that $\|X_0\|_\mx \leq \rho$. Set then $r = 1 + M_0 \rho$ and consider a time $0 < \tau \leq T$. For any $X \in B_{\tau,r}$ we introduce the map
\begin{equation}\label{eq:fixed_point_map}
\Phi_{X_0}(X)(t) = U(t,0) X_0 + \int_0^t U(t,s)F(s,X(s))ds
\end{equation}
We note that $X \in B_{\tau,\rho}$ is a solution of \Cref{eq:evolution_nonlinear_2} with initial datum $X_0$ if and only if $X$ is a fixed point of $\Phi_{X_0}$. The thesis then follows form Banach's theorem once its hypotheses are met. The first addend $U(t,0) X_0$ in \Cref{eq:fixed_point_map} is continuous by definition of evolution family. Furthermore, since \Cref{hyp:nonlinearities_1} is met, the function $F$ is continuos in $s$ and we find
\begin{equation}
\begin{split}
\left| \int_0^t U(t,s) F(X(t)) ds \right| \leq M_0 \int_0^t \left| F(s,X(t)) \right| \leq M \tau \|F\|_\infty.
\end{split}
\end{equation}
from which the dominated convergence theorem yields that $\Phi_{X_0}(X)$ is in $\mathcal{C}^0([0,\tau],\mx)$. Next we check that the map $\Phi_{X_0}(X)(\cdot)$ is Lipschitz continuous in $X$. For every $X,Y \in B_{\tau,\rho}$ we have
\begin{equation}
\begin{split}
\left\|\Phi_{X_0}(X)(t) - \Phi_{X_0}(Y)(t) \right\|_\mx & \leq M_0 \int_0^t \left\| \mathcal{F}(s,X(s))- \mathcal{F}(s,Y(s)) \right\|_\mx ds \\
& \leq M_0 \tau \left\| \mathcal{F}(s,X(s))- \mathcal{F}(s,Y(s)) \right\|_\mx
\end{split}
\end{equation}
which together with \Cref{lemma:nonlinear_lipschitz} yields
\begin{equation}\label{eq:evolution_lipschitz}
\left\|\Phi_{X_0}(X)(t) - \Phi_{X_0}(Y)(t) \right\|_\infty \leq \tau M_0 L_{\tau,\rho} \left\| X-Y \right\|_\infty.
\end{equation}
Since $L_{\tau,\rho}$ is non-decreasing in $\tau$, for $\tau$ small enough we have $\tau M_0 L_{\tau,\rho} < 1$ and the map $\Phi_{X_0}$ is a contraction on $B_{\tau,\rho}$. Therefore Banach's theorem guarantees that $\Phi_{X_0}$ admits a unique fixed point $X \in B_{\tau,\rho}$ -- The only solution to \Cref{eq:evolution_nonlinear_2} with initial datum $X_0$. At last we prove the bound the solution in the supremum norm. For any $t \in [0,\tau]$, using \Cref{lemma:nonlinear_lipschitz} together with the fact that $\mathcal{F}(s,0)=0$, we can write
\begin{equation}\label{eq:nonlinear_solution_bound}
\begin{split}
\|X(t)\|_\mx = \| \Phi_{X_0}(X)(t) \|_\mx \leq M_0 \| X_0 \|_\mx +  M_0 \int_0^t \|\mathcal{F}(s,X(s)) - \mathcal{F}(s,0) \|_\mx ds\\ \leq M_0 \rho + M_0 \tau L_{\tau,\rho} \rho
\end{split}
\end{equation}
If we choose 
\begin{equation}\label{eq:existence_time}
\tau = \min \left\{T, \dfrac{1}{2 M_0 L_{T,\rho}} \right\}
\end{equation}
we find a Lipschitz constant of $1/2$ and we can bound the norm of $X(t)$ independently of $\tau$ as follows.
\begin{equation*}
\| X(t) \|_\mx \leq \rho \left( 1 + M_0 \right)
\end{equation*}
\end{proof}
We can obtain an unconditional well-posedness result, gluing and shifting solutions in time. Since the proof is the same of \cite[Proposition 5.3]{Marta23}, we limit ourselves to state the final result.
\begin{proposition}\label{prop:nonlinear_well_posedness_unconditional}
Assume Hypotheses \eqref{hypothesis:smooth_metric} and \eqref{hyp:nonlinearities_1}. Then the following assertions hold true:
\begin{itemize}
\item[a)] For each $u_0 \in \mcu$ there is a maximal mild solution $X(t) \in \mathcal{C}^0(I,\mcu)$ with $I$ either $[0,T]$ or $[0,t^+(X_0))$, where $t^+(X_0) \in [\tau,T]$ and $\tau$ is as per \Cref{eq:existence_time}.
\item[b)] If $t^+(X_0) < T$, then $\lim_{t\rightarrow t^+(X_0)^-} \|X(t) \|_\mcu =+\infty$.
\item[c)] For any $t^\star \in (0,t^+(X_0))$ there is a radius $\rho=\rho(X_0,t^\star)$ such that the map
$$ \overline{B(X_0,\rho)} \rightarrow \mathcal{C}^0([0,b],\mcu), \quad X_0 \mapsto X(t)$$
is Lipschitz continuous.
\end{itemize}
\end{proposition}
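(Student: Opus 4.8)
The plan is to follow the standard ``gluing and shifting'' argument for semilinear evolution equations, leveraging the conditional result of \Cref{prop:nonlinear_well_posedness_conditional} as the local engine. First I would fix $X_0 \in \mcu$ and set $\rho_0 = \|X_0\|_\mcu$; \Cref{prop:nonlinear_well_posedness_conditional} then yields a mild solution on $[0,\tau(\rho_0)]$. The key observation enabling iteration is that the non-autonomous problem in \Cref{eq:evolution_nonlinear_2} is invariant under time-translation in the following weak sense: if $X$ solves the problem on $[t_1,t_2]$ with data $X(t_1)$, then for any later time we may restart the Duhamel fixed-point map $\Phi$ with the new ``initial'' datum, using the evolution family property $U(t,r)U(r,s)=U(t,s)$ from the definition of evolution family. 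Concretely, I would define $t^+(X_0)$ as the supremum of all $b$ such that a mild solution exists on $[0,b]$, show two such solutions agree on their common interval (again by the contraction/uniqueness in \Cref{prop:nonlinear_well_posedness_conditional} applied on small subintervals), and thereby obtain a maximal mild solution on $I = [0,t^+(X_0))$ (or $[0,T]$ if the supremum is $T$). Since the existence time $\tau(\rho)$ in \Cref{eq:existence_time} depends only on $\rho$ and $M_0$, one gets $t^+(X_0) \geq \tau(\rho_0)$, giving part (a).

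For part (b), the blow-up alternative, I would argue by contradiction: suppose $t^+(X_0) < T$ but $\liminf_{t \to t^+(X_0)^-}\|X(t)\|_\mcu = R < \infty$. Pick a sequence $t_n \uparrow t^+(X_0)$ with $\|X(t_n)\|_\mcu \leq R+1$. By \Cref{prop:nonlinear_well_posedness_conditional} applied with initial datum $X(t_n)$ and radius $R+1$, there is a uniform existence time $\tau^* = \tau(R+1) > 0$ (independent of $n$), so the solution extends from $t_n$ to $t_n + \tau^*$. Choosing $n$ large enough that $t_n + \tau^* > t^+(X_0)$, and invoking uniqueness to glue this extension to the original solution, contradicts the maximality of $t^+(X_0)$. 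Hence $\|X(t)\|_\mcu \to +\infty$. This is essentially the same mechanism as in \cite[Proposition 5.3]{Marta23}.

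For part (c), the Lipschitz dependence on initial data, I would fix $t^\star \in (0,t^+(X_0))$ and $b < t^+(X_0)$, and compare two solutions $X, \widetilde X$ with data $X_0, \widetilde X_0$. Writing the difference via Duhamel and using $\|U(t,s)\| \leq M_0$ together with the local Lipschitz bound of \Cref{lemma:nonlinear_lipschitz} on the ball $B_{b,\rho'}$ containing both trajectories (for $\|\widetilde X_0 - X_0\|_\mcu$ small, the a priori bound $\|X(t)\|_\infty \leq \rho(M_0+1)$ keeps $\widetilde X$ in a slightly enlarged ball), I get
\begin{equation*}
\|X(t) - \widetilde X(t)\|_\mcu \leq M_0 \|X_0 - \widetilde X_0\|_\mcu + M_0 L \int_0^t \|X(s) - \widetilde X(s)\|_\mcu\, ds,
\end{equation*}
and Gr\"onwall's inequality closes the estimate with a constant depending on $b$, $M_0$, and $L$. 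The only delicate point is ensuring $\widetilde X$ does not blow up before time $b$; this follows because a short-time a priori bound forces $\widetilde X$ to stay within a fixed ball, so by part (b) its maximal time exceeds $b$ once $\rho = \rho(X_0,t^\star)$ is taken small enough.

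The main obstacle is bookkeeping rather than conceptual: one must be careful that the uniqueness statement in \Cref{prop:nonlinear_well_posedness_conditional} is only \emph{local} (uniqueness within a ball $B_{\tau,\rho}$), so to promote it to genuine uniqueness of the maximal solution one covers the common interval of two solutions by finitely many small subintervals on each of which both solutions lie in a common ball, and propagates agreement subinterval by subinterval. The time-translation invariance needed to restart the argument at an arbitrary base point $t_1 \in (0,T)$ requires checking that the hypotheses of \Cref{prop:nonlinear_well_posedness_conditional} (smoothness of the metric, the structure of $\mathcal{F}$) are manifestly preserved under replacing $[0,T]$ by $[t_1,T]$, which is immediate from \Cref{hypothesis:smooth_metric} and \Cref{hyp:nonlinearities_1}. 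Everything else is a routine adaptation of \cite[Proposition 5.3]{Marta23}.
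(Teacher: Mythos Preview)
Your proposal is correct and follows exactly the approach the paper takes: the paper does not prove this proposition directly but simply states that ``the proof is the same of \cite[Proposition 5.3]{Marta23}'', i.e., the standard gluing--shifting argument you have outlined. Your sketch is in fact more detailed than what the paper provides.
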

\section*{acknowledgements}
This work was supported by a fellowship of the University of Milano.

\bibliographystyle{alpha}
\bibliography{biblio}
\end{document}